\theoremstyle{plain}
\newtheorem{thm}{Theorem}[section]
\numberwithin{equation}{section}
\numberwithin{figure}{section}  
\theoremstyle{plain}
\theoremstyle{plain}
\newtheorem{cor}[thm]{Corollary} 
\theoremstyle{plain}
\newtheorem{defin}[thm]{Definition}
\theoremstyle{plain}
\newtheorem{lem}[thm]{Lemma} 
\theoremstyle{plain}
\begin{document}
\title{On the Generalized Difference Matrix Domain on Strongly Almost Convergent Double Sequence Spaces}
\author{Orhan Tu\v{g}}
\subjclass[2010]{46A45, 40C05}
\keywords{Four-dimensional matrices; $B(r,s,t,u)$-Summable; strongly almost convergent double sequences.}
\address[Orhan Tu\v{g}]{Department of Mathematics Education, Tishk International University, Qazi Mohammad, Erbil, Iraq}
\email{orhan.tug@ishik.edu.iq, tugorhan27@gmail.com}
\begin{abstract}
Most recently, some new double sequence spaces $B(\mathcal{M}_{u})$, $B(\mathcal{C}_{\vartheta})$ where $\vartheta=\{b,bp,r,f,f_0\}$ and $B(\mathcal{L}_{q})$ for $0<q<\infty$  have been introduced as four-dimensional generalized difference matrix  $B(r,s,t,u)$ domain on the double sequence spaces $\mathcal{M}_{u}$, $\mathcal{C}_{\vartheta}$ where $\vartheta=\{b,bp,r,f,f_0\}$ and $\mathcal{L}_{q}$ for $0<q<\infty$, and some topological properties, dual spaces, some new four-dimensional matrix classes and matrix transformations related to these spaces have also been studied by Tu\u{g} and Ba\c{s}ar and Tu\u{g} (see \cite{OT,OT2,Orhan,Orhan 2}). In this present paper, we introduce new strongly almost null and strongly almost convergent double sequence spaces  $B[\mathcal{C}_f]$ and $B[\mathcal{C}_{f_0}]$ as domain of four-dimensional generalized difference matrix $B(r,s,t,u)$ in the spaces $[\mathcal{C}_f]$ and $[\mathcal{C}_{f_0}]$, respectively. Firstly, we prove that the new double sequence spaces $B[\mathcal{C}_f]$ and $B[\mathcal{C}_{f_0}]$ are Banach spaces with its norm. Then, we give some inclusion relations including newly defined strongly almost convergent double sequence spaces. Moreover, we calculate the $\alpha-$dual, $\beta(bp)-$dual and $\gamma-$dual of the space $B[\mathcal{C}_f]$. Finally, we characterize new four-dimensional matrix classes $([\mathcal{C}_{f}];\mathcal{C}_{f})$, $([\mathcal{C}_{f}];\mathcal{M}_{u})$, $(B[\mathcal{C}_{f}];\mathcal{C}_{f})$, $(B[\mathcal{C}_{f}];\mathcal{M}_{u})$ and we complete this work with some significant results.
\end{abstract}\maketitle

\section{Preliminaries, Background and Notation}
By the set $\Omega:=\{x=(x_{mn}): x_{mn}\in\mathbb{C},~~\forall m,n\in\mathbb{N}\}$, we denote all complex valued double sequences. $\Omega$ is a vector space with coordinatewise addition and scalar multiplication and any vector subspace of $\Omega$ is called a double sequence space. A double sequence $x=(x_{mn})$ is called convergent in the Pringsheim's sense to a limit point $L$, if for every $\epsilon>0$ there exists a natural number $n_0=n_0(\epsilon)$ and $L\in\mathbb{C}$ such that $|x_{mn}-L|<\epsilon$ for all $m,n>n_0$,	where $\mathbb{C}$ denotes the complex field. The space of all convergent double sequence in Pringsheim's sense is denoted by $\mathcal{C}_p$, that is,
	\begin{eqnarray*}
		\mathcal{C}_p:=\{x=(x_{mn})\in\Omega: \exists L\in\mathbb{C},\forall\epsilon>0 \exists k\in\mathbb{N}, \forall m,n\geq k \ni |x_{mn}-L|<\epsilon\}
	\end{eqnarray*}
which is a linear space with coordinatewise addition and scalar multiplication. Moricz\cite{MF} proved that the double sequence space $\mathcal{C}_p$ is complete seminormed space with the seminorm
	\begin{eqnarray*}
		\|x\|_{\infty}=\lim_{N\to\infty}\sup_{m,n\geq N}|x_{mn}|.
	\end{eqnarray*}

We must note here that the space of all null double sequences in Pringsheim's sense is denoted by $\mathcal{C}_{p0}$. 

A double sequence $x=(x_{mn})$ of complex number is called bounded if $\|x\|_{\infty}=\sup_{m,n\in\mathbb{N}}|x_{mn}|<\infty$, where $\mathbb{N}=\{0,1,2,\cdots\}$ and the space of all bounded double sequences is denoted by $\mathcal{M}_u$, that is;
	\begin{eqnarray*}
		\mathcal{M}_u:=\{x=(x_{mn})\in\Omega: \|x\|_{\infty}=\sup_{m,n\in\mathbb{N}}|x_{m,n}|<\infty\}
	\end{eqnarray*}
	which is a Banach space with the $\|x\|_{\infty}$ norm.

Unlike single sequence there are such double sequences which are convergent in Pringsheim's sense but unbounded. That is, the set $\mathcal{C}_p\setminus\mathcal{M}_u$ is not empty. Boos \cite{BJ} defined the sequence $x=(x_{mn})$ by
\begin{eqnarray*}
	x_{mn}=\left\{
	\begin{array}{ccl}
		n&, & m=0, n\in\mathbb{N}; \\
		0&, & m\geq1, n\in\mathbb{N}.
	\end{array}
	\right.
\end{eqnarray*}
which is obviously in $\mathcal{C}_p$, i.e., $p-\lim_{m,n\rightarrow\infty}x_{mn}=0$ but not in the set $\mathcal{M}_u$, i.e., $\|x\|_{\infty}=\sup_{m,n\in\mathbb{N}}|x_{mn}|=\infty$. Thus,  $x\in\mathcal{C}_p-\mathcal{M}_u $.

Let consider the set $\mathcal{C}_{bp}$ of double sequences which are both convergent in Pringsheim's sense and bounded, i.e., $\mathcal{C}_{bp}=\mathcal{C}_{p}\cap\mathcal{M}_{u}$ which is defined as
	\begin{eqnarray*}
		\mathcal{C}_{bp}:=\{x=x_{mn}\in\mathcal{C}_{p}: \|x\|_{\infty}=\sup_{m,n\in\mathbb{N}}|x_{mn}|<\infty\}=\mathcal{C}_{p}\cap\mathcal{M}_{u}.
	\end{eqnarray*}
The set of all convergent in Pringsheim's sense and bounded double sequence space $\mathcal{C}_{bp}$ is a linear Banach space with the norm
	\begin{eqnarray*}
		\|x\|_{\infty}=\sup_{m,n\in\mathbb{N}}|x_{mn}|<\infty.
	\end{eqnarray*}

Hardy \cite{HGH} introduced a sequence in the space $\mathcal{C}_p$ which is called regularly convergent if it is a single convergent sequence with respect to each index. We denote the set of such double sequences by $\mathcal{C}_r$,i.e.,
	\begin{eqnarray*}
		\mathcal{C}_r:=\{x=x_{mn}\in\mathcal{C}_{p}\: \forall m\in\mathbb{N}\ni(x_{mn})_{m}\in c,\textit{ and } \forall n\in\mathbb{N}\ni(x_{mn})_{n}\in c\}.
	\end{eqnarray*}
Regular convergence requires the boundedness of double sequence that is the main difference between regular convergence and the convergence in Pringsheim' sense. We can also note here that  $\mathcal{C}_{bp0}=\mathcal{M}_{u}\cap\mathcal{C}_{p0}$ and $\mathcal{C}_{r0}=\mathcal{C}_{r}\cap\mathcal{C}_{p0}$.

The space $\mathcal{L}_q$ of all absolutely $q-$summable double sequences was introduced by Ba\c{s}ar and Sever \cite{FBYS} as follow;
	\begin{eqnarray*}
		\mathcal{L}_q:=\left\{x=(x_{kl})\in\Omega:\sum_{k,l}|x_{kl}|^q<\infty\right\},\quad (1\leq q<\infty)
	\end{eqnarray*}
which is a Banach space with the norm $\|\cdot\|_q$ defined by
	\begin{equation*}
	\|x\|_q=\left(\sum_{k,l}|x_{kl}|^q\right)^{1/q}.
	\end{equation*}
Moreover, Zeltser \cite{ZM} introduced the space $\mathcal{L}_u$ which derived from the space $\mathcal{L}_q$ with $q=1$.

The double sequence spaces $\mathcal{BS}$, $\mathcal{CS}_{\vartheta}$; where $\vartheta=\{p, bp, r\}$, and $\mathcal{BV}$ were introduced by Altay and Ba\c{s}ar \cite{Altay}. The set $\mathcal{BS}$ of all bounded series whose sequences of partial sums are bounded is defined by
	\begin{equation*}
	\mathcal{BS}=\left\{x=(x_{kl})\in\Omega:\sup_{m,n\in\mathbb{N}}|s_{mn}|<\infty\right\}
	\end{equation*}
where the sequence $s_{mn}=\sum_{k,l=0}^{m,n}x_{kl}$ is the $(m,n)-th$ partial sum of the series. The series space $\mathcal{BS}$ is a linear Banach space with norm defined as
\begin{equation}\label{eq2.9999}
\|x\|_{\mathcal{BS}}=\sup_{m,n\in\mathbb{N}}\left|\sum_{k,l=0}^{m,n}x_{kl}\right|,
\end{equation}
which is linearly isomorphic to the sequence space $\mathcal{M}_u$. The set $\mathcal{CS_{\vartheta}}$ of all series whose sequences of partial sums are $\vartheta-$convergent in Pringsheim's sense is defined by
	\begin{equation*}
	\mathcal{CS_{\vartheta}}=\left\{x=(x_{kl})\in\Omega: (s_{mn})\in\mathcal{C_{\vartheta}}\right\}
	\end{equation*}
where $\vartheta=\{p, bp, r\}$. The space $\mathcal{CS}_p$ is linear complete seminormed space with the seminorm defined by
\begin{equation*}
\|x\|_\infty=\lim_{n\to\infty}\left(\sup_{k,l\geq n}\left|\sum_{i,j=0}^{k,l}x_{ij}\right|\right),
\end{equation*}
which is isomorphic to the  sequence space $\mathcal{C}_p$. Moreover, the sets $\mathcal{CS}_{bp}$ and $\mathcal{CS}_{r}$ are also linear Banach spaces with the norm (\ref{eq2.9999}) and the inclusion $\mathcal{CS}_{r}\subset \mathcal{CS}_{bp}$ holds. The set $\mathcal{BV}$ of all double sequences of bounded variation is defined by
	\begin{equation*}
	\mathcal{BV}=\left\{x=(x_{kl})\in\Omega: \sum_{k,l}\left|x_{kl}-x_{k-1,l}-x_{k,l-1}+x_{k-1,l-1}\right|<\infty\right\}.
	\end{equation*}
The space $\mathcal{BV}$ is linear Banach space with the norm defined by
\begin{equation*}
\|x\|_{\mathcal{BV}}=\sum_{k,l}\left|x_{kl}-x_{k-1,l}-x_{k,l-1}+x_{k-1,l-1}\right|,
\end{equation*}
which is linearly isomorphic to the space $\mathcal{L}_u$ of absolutely convergent double series. Moreover, the inclusions $\mathcal{BV}\subset \mathcal{C_{\vartheta}}$ and $\mathcal{BV}\subset \mathcal{M}_u$ are strictly hold.

For any double sequence spaces $\lambda$ and $\mu$, the following set $D_2(\lambda,\mu)$ defines as follows
\begin{eqnarray}\label{eq1.10}
D_2(\lambda,\mu)=\{ a=(a_{mn})\in\Omega :ax=(a_{mn}x_mn)\in\mu\textit{ for all }x=(x_{mn})\in\lambda \}
\end{eqnarray}
is called multiplier space of the double sequence spaces $\lambda$ and $\mu$. One can be observed for a double sequence space $u$ with $\mu\subset u\subset\lambda$ that the inclusions
\begin{eqnarray*}
	D_2(\lambda,\mu)\subset D_2(u,\mu)~and~D_2(\lambda,\mu)\subset D_2(\lambda, u)
\end{eqnarray*}
hold. By means of the set (\ref{eq1.10}) $\alpha-$dual, $\beta(\vartheta)-$dual with respect to the $\vartheta-$convergence and $\gamma-$dual of a double sequence space $\lambda$ which are denoted by $\lambda^{\alpha}$, $\lambda^{\beta(\vartheta)}$ and $\lambda^{\gamma}$, respectively, are defined as
\begin{eqnarray*}
	\lambda^{\alpha}:=D_2(\lambda,\mathcal{L}_u),~\lambda^{\beta(\vartheta)}:=D_2(\lambda,\mathcal{CS}_{\vartheta}),~~and~~\lambda^{\gamma}:=D_2(\lambda,\mathcal{BS})
\end{eqnarray*}

Moreover, let $\lambda$ and $\mu$ are arbitrary double sequences with $\lambda\supset\mu$ such that $\lambda^{\alpha}\subset\mu^{\alpha}$, $\lambda^{\gamma}\subset\lambda^{\alpha}$ and $\lambda^{\beta(\vartheta)}\subset\lambda^{\alpha}$ holds. But the inclusion $\lambda^{\gamma}\subset\lambda^{\beta(\vartheta)}$ does not hold, since the $\vartheta-$convergence of the double sequence does not guarantee its boundedness.

In here, we concern the four dimensional matrix transformation from any double sequence space $\lambda$ to any double sequence space $\mu$. Given any four-dimensional infinite matrix $A=(a_{mnkl})$, where $m,n,k,l\in\mathbb{N}$, any double sequence $x=(x_{kl})$, we write $Ax=\{(Ax)_{mn}\}_{m,n\in\mathbb{N}}$, the $A-$transform of $x$, exists for every sequence $x=(x_{kl})\in\lambda$ and it is in $\mu$; where
\begin{eqnarray}\label{eq1.1}
(Ax)_{mn}=\vartheta-\sum_{k,l}a_{mnkl}x_{kl}~\textit{ for each } m,n\in\mathbb{N}.
\end{eqnarray}
The four dimensional matrix domain has fundamental importance for this article. Therefore, this concept is presented in this paragraph.The $\vartheta-$summability domain $\lambda_A^{(\vartheta)}$ of $A$ in a space $\lambda$ of double sequences is described as
\begin{eqnarray*}
	\lambda_A^{(\vartheta)}=\left\{x=(x_{kl})\in\Omega: Ax=\left(\vartheta-\sum_{k,l}a_{mnkl}x_{kl}\right)_{m,n\in\mathbb{N}}\textit{ exists and is in }\lambda\right\}.
\end{eqnarray*}
The notation (\ref{eq1.1}) says that $A$ maps the space $\lambda$ into the space $\mu$ if $\lambda\subset\mu_A^{(\vartheta)}$ and we denote the set of all four-dimensional matrices, transforming the space $\lambda$ into the space $\mu$, by $(\lambda:\mu)$. Thus, $A=(a_{mnkl})\in (\lambda:\mu)$ if and only if the double series on the right side of (\ref{eq1.1}) converges in the sense of $\vartheta$ for each $m,n\in\mathbb{N}$, i.e, $A_{mn}\in\lambda^{\beta(\vartheta)}$ for all $m,n\in\mathbb{N}$ and we have $Ax\in\mu$ for all $x\in\lambda$; where $A_{mn}=(a_{mnkl})_{k,l\in\mathbb{N}}$ for all $m,n\in\mathbb{N}$. Moreover, the following definitions are significant in order to classify the four dimensional matrices. A four-dimensional matrix $A$ is called $\mathcal{C}_{\vartheta}-conservative$ if $\mathcal{C}_{\vartheta}\subset (\mathcal{C}_{\vartheta})_A$, and is called $\mathcal{C}_{\vartheta}-regular$ if it is $\mathcal{C}_{\vartheta}-conservative$ and
\begin{eqnarray*}
	\vartheta-\lim Ax=\vartheta-\lim_{m,n\to\infty}(Ax)_{mn}=\vartheta-\lim_{m,n\to\infty}x_{mn}, \textit{ where } x=(x_{mn})\in\mathcal{C}_{\vartheta}.
\end{eqnarray*}

The aim of calculating the matrix domain on sequence spaces is to set up new sequence spaces which can be expansion or contraction of the original space. There are several four-dimensional matrices such as Riesz mean, Euler mean, etc., to calculate its domain on double sequence spaces. The most common and used one is triangular matrices which have been defined by Adams \cite{ACR}. An infinite matrix $A=(a_{mnkl})$ is called a triangular matrix if $a_{mnkl}=0$ for $k>m$ or $l>n$ or both. We also say by \cite{ACR} that an infinite matrix $A=(a_{mnkl})$ is said to be a triangular if $a_{mnmn}\neq0$ for all $m,n\in\mathbb{N}$. Moreover, Cooke \cite{CRC} proved that every infinite triangular matrix has a unique right and left inverse which are equal triangular matrices.

Concerning the matrix domain on sequence spaces, we should say here that such studies on single sequence spaces are much more than the studies on double sequence spaces. There are a few  works on four-dimensional matrix domains on double sequence spaces which have been done by several mathematician. To review the concerning literature about the domain $\lambda_A$ of four-dimensional infinite matrix $A$ in double sequence spaces $\lambda$, the following table gives the summary of the works conducted so far.
\begin{center}
	\begin{tabular}{|c|c|c|c|}
		\hline $A$ &$\lambda$ &$\lambda_A$ & \textrm{refer to:} \\\hline
		$C$ & $\mathcal{M}_u$, $\mathcal{C}_p$, $\mathcal{C}_{0p}$, $\mathcal{C}_r$, $\mathcal{C}_{bp}$, $\mathcal{L}_{q}$ &$\tilde{\mathcal{M}_u}$, $\tilde{\mathcal{C}_p}$, $\tilde{\mathcal{C}_{0p}}$, $ \tilde{\mathcal{C}_r}$, $\tilde{\mathcal{C}_{bp}}$, $\tilde{\mathcal{L}_{q}}$& \cite{Mursaleen}\\

		$\Delta(1,-1,1,-1)$ & $\mathcal{M}_u$, $\mathcal{C}_p$, $\mathcal{C}_{0p}$, $\mathcal{C}_r$, $\mathcal{L}_{q}$&$\mathcal{M}_u(\Delta)$, $\mathcal{C}_p(\Delta)$, $\mathcal{C}_{0p}(\Delta)$, $ \mathcal{C}_r(\Delta)$, $\mathcal{L}_{q}(\Delta)$& \cite{Demiriz}\\

		$C$ & $\tilde{\mathcal{M}_u}$, $\tilde{\mathcal{C}_p}$, $\tilde{\mathcal{C}_{0p}}$, $ \tilde{\mathcal{C}_r}$, $\tilde{\mathcal{C}_{bp}}$, $\tilde{\mathcal{L}_{q}}$&$\tilde{\mathcal{M}_u(t)}$, $\tilde{\mathcal{C}_p(t)}$, $\tilde{\mathcal{C}_{0p}(t)}$, $ \tilde{\mathcal{C}_r(t)}$, $\tilde{\mathcal{C}_{bp}(t)}$, $\tilde{\mathcal{L}_{q}(t)}$& \cite{Demiriz 2}\\

		$R^{qt}$& $\mathcal{L}_s$ & $R^{qt}(\mathcal{L}_s))$ & \cite{MYFB22}\\
		
		$B(r,s,t,u)$ & $\mathcal{M}_u$, $\mathcal{C}_p$, $\mathcal{C}_{bp}$, $\mathcal{C}_r$, $\mathcal{L}_q$&$B(\mathcal{M}_u)$, $B(\mathcal{C}_p)$, $B(\mathcal{C}_{bp})$, $B(\mathcal{C}_r)$, $B(\mathcal{L}_q)$& \cite{OT2}\\

		$R^{qt}$ & $\mathcal{M}_u$, $\mathcal{C}_p$, $\mathcal{C}_{bp}$, $\mathcal{C}_r$ &$(\mathcal{M}_u)_{R^{qt}}$, $(\mathcal{C}_p)_{R^{qt}}$, $(\mathcal{C}_{bp})_{R^{qt}}$, $(\mathcal{C}_r)_{R^{qt}}$& \cite{Medine}\\
		
		$E(r,s)$ & $\mathcal{L}_p$, $\mathcal{M}_u$ & $\mathcal{E}_p^{r,s}$, $\mathcal{E}_{\infty}^{r,s}$& \cite{Galebi}\\

		$B(r,s,t,u)$ & $\mathcal{C}_f$, $\mathcal{C}_{f_0}$&$B(\mathcal{C}_f)$, $B(\mathcal{C}_{f_0})$& \cite{Orhan, Orhan 2}\\
		
		$\Delta(1,-1,1,-1)$ & $\mathcal{F}$, $\mathcal{F}_{0}$, $[\mathcal{F}]$, $[\mathcal{F}_{0}]$ &$\mathcal{F}(\Delta)$, $\mathcal{F}_{0}(\Delta)$, $[\mathcal{F}](\Delta)$, $[\mathcal{F}_{0}](\Delta)$& \cite{Husamettin}\\
		
		\hline
	\end{tabular}
	\\~
	
\end{center}
where the matrices $C$, $\Delta(1,-1,1,-1)$, $R^{qt}$  and $E(r,s)$ denote C\'{e}saro mean, Four-Dimensional difference matrix, Riesz mean and doubly Euler mean, respectively.

The four dimensional generalized difference matrix $B(r,s,t,u)=\{b_{mnkl}(r,s,t,u)\}$, as a generalization of $\Delta(1,-1,1,-1)$, introduced by Tu\v{g} and Ba\c{s}ar\cite{OT}. Then, Tu\v{g}\cite{OT2,Orhan,Orhan 2,OT3} calculated the $B(r,s,t,u)-$domain on some double sequence spaces and stated some significant topological properties, inclusion relations, dual spaces and matrix transformations beside characterizing some new four-dimensional matrix classes.

The matrix $B(r,s,t,u)=\{b_{mnkl}(r,s,t,u)\}$ is represented by
\begin{eqnarray*}
	b_{mnkl}(r,s,t,u):=\left\{
	\begin{array}{ccl}
		su&, & (k,l)=(m-1,n-1), \\
		st&, & (k,l)=(m-1,n), \\
		ru&, & (k,l)=(m,n-1), \\
		rt&, & (k,l)=(m,n) \\
		0&, &\textrm{otherwise}
	\end{array}\right.
\end{eqnarray*}
for $r,s,t,u\in\mathbb{R}\backslash\{0\}$ and for all $m,n,k,l\in\mathbb{N}$. The matrix  $B(r,s,t,u)$ transforms a double sequence $x=(x_{mn})$ as
\begin{eqnarray}\label{eq1.3}
y_{mn}:=\{B(r,s,t,u)x\}_{mn}&=&\sum_{k,l}b_{mnkl}(r,s,t,u)x_{kl}\\
&=&sux_{m-1,n-1}+stx_{m-1,n}+rux_{m,n-1}+rtx_{mn}\notag
\end{eqnarray}
for all $m,n\in\mathbb{N}$. To obtain the relation the relation between $x=(x_{mn})$ and $y=(y_{mn})$, it is needed to calculate the inverse of $B(r,s,t,u)$ which is the matrix $F=f_{mnkl}(r,s,t,u)$ such that
\begin{eqnarray*}\label{eq1.2}
	f_{mnkl}(r,s,t,u):=\left\{
	\begin{array}{ccl}
		\frac{(-s/r)^{m-k}(-u/t)^{n-l}}{rt}&, & 0\leq k\leq m,~ 0\leq l\leq n, \\
		0&, & \textrm{otherwise}
	\end{array}\right.
\end{eqnarray*}
for all $m,n,k,l\in\mathbb{N}$. Thus, the relation between $x=(x_{mn})$ and $y=(y_{mn})$ can be obtained by 
\begin{eqnarray}\label{eq1.4}
x_{mn}=\frac{1}{rt}\sum_{k,l=0}^{m,n}\left(\frac{-s}{r}\right)^{m-k}\left(\frac{-u}{t}\right)^{n-l}y_{kl}~\textrm{ for all}~m,n\in\mathbb{N}.
\end{eqnarray}

Note that the four-dimensional generalized difference matrix $B(r,s,t,u)$ will be four-dimensional difference matrix $\Delta(1,-1,1,-1)$ in the case $r=t=1, ~s=u=-1$. Therefore, the results obtained by the matrix $B(r,s,t,u)$ is much more general then the results abtained by the matrix $\Delta(1,-1,1,-1)$. Throughout the paper, the connection between the double sequence $x=(x_{mn})$ and $y=(y_{mn})$ will be given by the relation (\ref{eq1.4}).

\section{The Sequence Spaces of Almost and Strongly Almost Convergent Double Sequences}
The concept of almost convergence for single sequence introduced by Lorentz\cite{GGL} and then Rhoades\cite{MFRBE} extended the idea of almost convergence for double sequence. He stated that a double sequence $x=(x_{kl})$ of complex numbers is called almost convergent to a generalized limit $L$ if
\begin{eqnarray*}
	p-\lim_{q,q'\to\infty}\sup_{m,n>0}\left|\frac{1}{(q+1)(q'+1)}\sum_{k=m}^{m+q}\sum_{l=n}^{n+q'}x_{kl}-L\right|=0.
\end{eqnarray*}
In this case, $L$ is called the $f_2-$limit of the double sequence $x$. Then Ba\c{s}arir \cite{Basarir} defined the concept of strongly almost convergence of double sequences. A double sequence $x=(x_{kl})$ of real numbers is said to be strongly almost convergent to a limit $L_1$ if 
\begin{eqnarray*}
	p-\lim_{q,q'\to\infty}\sup_{m,n>0}\frac{1}{(q+1)(q'+1)}\sum_{k=m}^{m+q}\sum_{l=n}^{n+q'}\left|x_{kl}-L\right|=0.
\end{eqnarray*}
and it is uniform in $m,n\in\mathbb{N}$. Now we may define the set of all almost convergent, almost null, strongly almost convergent and strongly almost null double sequences, respectively, as follow;

\begin{eqnarray*}
&&\mathcal{C}_f:=\left\{\begin{array}{c}
x=(x_{kl})\in\Omega:\exists L\in\mathbb{C} \ni p-\lim_{q,q'\to\infty}\sup_{m,n>0}\left|\frac{1}{(q+1)(q'+1)}\sum_{k=m}^{m+q}\sum_{l=n}^{n+q'}x_{kl}-L\right|=0,\\
	~\textit{uniformly in }m,n\in\mathbb{N}\textit{  for some }L
		\end{array}\right\},\\
&&\mathcal{C}_{f_0}:=\left\{\begin{array}{c}
		x=(x_{kl})\in\Omega:\exists L\in\mathbb{C} \ni p-\lim_{q,q'\to\infty}\sup_{m,n>0}\left|\frac{1}{(q+1)(q'+1)}\sum_{k=m}^{m+q}\sum_{l=n}^{n+q'}x_{kl}\right|=0,\\
		~\textit{uniformly in }m,n\in\mathbb{N}
	\end{array}\right\},\\
&&[\mathcal{C}_f]:=\left\{\begin{array}{c}
		x=(x_{kl})\in\Omega:\exists L\in\mathbb{C} \ni p-\lim_{q,q'\to\infty}\sup_{m,n>0}\frac{1}{(q+1)(q'+1)}\sum_{k=m}^{m+q}\sum_{l=n}^{n+q'}\left|x_{kl}-L_1\right|=0,\\
		~\textit{uniformly in }m,n\in\mathbb{N}\textit{  for some }L_1
	\end{array}\right\},\\
&&[\mathcal{C}_{f_0}]:=\left\{\begin{array}{c}
		x=(x_{kl})\in\Omega:\exists L\in\mathbb{C} \ni p-\lim_{q,q'\to\infty}\sup_{m,n>0}\frac{1}{(q+1)(q'+1)}\sum_{k=m}^{m+q}\sum_{l=n}^{n+q'}\left|x_{kl}\right|=0,\\
		~\textit{uniformly in }m,n\in\mathbb{N}
	\end{array}\right\},
\end{eqnarray*}

Here we can say for this case that $L_1$ is called $[f_2]-$limit of a double sequence $x=(x_{kl})$ and written shortly as $[f_2]-\lim x=L_1$. 

Here we state some geometrical and topological properties of these sets. It is well known that $\mathcal{C}_{p}\setminus\mathcal{C}_f$ is not empty, but the inclusions $\mathcal{C}_{bp}\subset\mathcal{C}_f\subset \mathcal{M}_u$ strictly hold. Since the following inequality 
\begin{eqnarray*}
\sup_{m,n>0}\left|\frac{1}{(q+1)(q'+1)}\sum_{k=m}^{m+q}\sum_{l=n}^{n+q'}x_{kl}-L\right|\leq \sup_{m,n>0}\frac{1}{(q+1)(q'+1)}\sum_{k=m}^{m+q}\sum_{l=n}^{n+q'}\left|x_{kl}-L\right|.
\end{eqnarray*}
holds, we can easily say that if a double sequence is strongly almost convergent, that is, the right hand side of the above inequality approaches to zero if we pass to limit as $q,q'\to\infty$, then the left hand side of the inequality also tends to zero. It says that the inclusion $[\mathcal{C}_f]\subset\mathcal{C}_f$ holds and it easily can be seen that the double sequence $x_{kl}=(-1)^l$, for all $k\in\mathbb{N}$, is in $\mathcal{C}_f\setminus[\mathcal{C}_f]$. So the inclusion is strictly hold. Now, we can mention here that the inclusions $\mathcal{C}_{bp}\subset[\mathcal{C}_{f_0}]\subset[\mathcal{C}_f]\subset\mathcal{C}_{f_0}\subset\mathcal{C}_f\subset \mathcal{M}_u$ are strictly hold and each inclusion is proper. 

Furthermore, the sets $\mathcal{C}_f$ and $\mathcal{C}_{f_0}$ are Banach spaces with the norm
\begin{eqnarray*}
\|x\|_{\mathcal{C}_f}=\sup_{q,q',m,n\in\mathbb{N}}\left|\frac{1}{(q+1)(q'+1)}\sum_{k=m}^{m+q}\sum_{l=n}^{n+q'}x_{kl}\right|.
\end{eqnarray*}
and the sets $[\mathcal{C}_f]$ and $[\mathcal{C}_{f_0}]$ are Banach spaces with norm
\begin{eqnarray*}
	\|x\|_{[\mathcal{C}_f]}=\sup_{q,q',m,n\in\mathbb{N}}\frac{1}{(q+1)(q'+1)}\sum_{k=m}^{m+q}\sum_{l=n}^{n+q'}\left|x_{kl}\right|.
\end{eqnarray*}

\v{C}unjalo \cite{FC} introduced that a double sequence $x=(x_{kl})$ is called almost Cauchy if for every $\epsilon>0$ there exists a positive integer $K$ such that
\begin{eqnarray*}
	\left|\frac{1}{(q_1+1)(q_1'+1)}\sum_{k=m_1}^{m_1+q_1}\sum_{l=n_1}^{n_1+q_1'}x_{kl}-\frac{1}{(q_2+1)(q_2'+1)}\sum_{k=m_2}^{m_2+q_2}\sum_{l=n_2}^{n_2+q_2'}x_{kl}\right|<\epsilon
\end{eqnarray*}
for all $q_1, q_1', q_2, q_2'>K$ and $(m_1,n_1),(m_2,n_2)\in\mathbb{N}\times\mathbb{N}$. Then, Mursaleen and Mohiuddine \cite{MMSA} proved that every double sequence is almost convergent if and only if it is almost Cauchy.

M\'{o}ricz and Rhoades \cite{MFRBE} characterized the four-dimensional matrix class $(\mathcal{C}_f;\mathcal{C}_{bp})$ with $bp-\lim Ax=f_2-\lim x$. Then, Zeltser et al. \cite{ZMM} characterized the matrix classes $(\mathcal{C}_{\vartheta};\mathcal{C}_{f})$ which is called $\mathcal{C}_{\vartheta}-$conservative. If $f_2-\lim Ax=\vartheta-\lim x$ for all $x\in\mathcal{C}_{\vartheta}$, then it is called  $\mathcal{C}_{\vartheta}-$regular. Moreover, Mursaleen \cite{MM} introduced the almost strongly regularity for double sequences and characterized the matrix class $(\mathcal{C}_{f};\mathcal{C}_{f})$

\section{strongly almost B-summable double sequence spaces}

Almost $B-$summable double sequence spaces $B(\mathcal{C}_f)$ and $B(\mathcal{C}_{f_0})$ were defined and studied by Tu\v{g} \cite{Orhan} which was derived by the domain of four-dimensional generalized difference matrix $B(r,s,t,u)$ in the spaces of all almost convergent and almost null double sequences $\mathcal{C}_{f}$ and $\mathcal{C}_{f_0}$, respectively. 

In this paper, we define strongly almost $B-$summable double sequence spaces $B[\mathcal{C}_f]$ and $B[\mathcal{C}_{f_0}]$ as domain of four-dimensional generalized difference matrix $B(r,s,t,u)$ in the spaces of strongly almost convergent and strongly almost null double sequences $[\mathcal{C}_{f}]$ and $[\mathcal{C}_{f_0}]$, respectively.

Now we may define the new spaces $B[\mathcal{C}_f]$ and $B[\mathcal{C}_{f_0}]$ as follow:
\begin{eqnarray*}
&&B[\mathcal{C}_f]:=\left\{\begin{array}{c}
		x=(x_{kl})\in\Omega:\exists L\in\mathbb{C} \ni p-\lim_{q,q'\to\infty}\sup_{m,n>0}\frac{1}{(q+1)(q'+1)}\sum_{k=m}^{m+q}\sum_{l=n}^{n+q'}\left|(Bx)_{kl}-L\right|=0,\\
		~\textit{uniformly in }m,n\in\mathbb{N}\textit{  for some }L_1
	\end{array}\right\},\\
&&B[\mathcal{C}_{f_0}]:=\left\{\begin{array}{c}
		x=(x_{kl})\in\Omega:\exists L\in\mathbb{C} \ni p-\lim_{q,q'\to\infty}\sup_{m,n>0}\frac{1}{(q+1)(q'+1)}\sum_{k=m}^{m+q}\sum_{l=n}^{n+q'}\left|(Bx)_{kl}\right|=0,\\
		~\textit{uniformly in }m,n\in\mathbb{N}
	\end{array}\right\},
\end{eqnarray*}

Now we may state the following essential theorem without proof. The proof can be done with quite similar way as in the \cite[Theorem 3.1, p.6]{Orhan}. 

\begin{thm}\label{thm2.0}
	The sequence spaces $B[\mathcal{C}_f]$ and $B[\mathcal{C}_{f_0}]$ are Banach spaces and linearly norm isomorphic to the spaces $[\mathcal{C}_{f}]$ and $[\mathcal{C}_{f_0}]$, respectively, with the norm defined by
	\begin{eqnarray}\label{eq2.1}
	\|x\|_{B[\mathcal{C}_f]}=\sup_{q,q',m,n\in\mathbb{N}}\frac{1}{(q+1)(q'+1)}\sum_{k=m}^{m+q}\sum_{l=n}^{n+q'}\left|(Bx)_{kl}\right|.
	\end{eqnarray}
\end{thm}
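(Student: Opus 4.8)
The plan is to prove the two assertions — linear norm isomorphism and completeness — separately, following the standard template for matrix-domain spaces applied earlier in \cite{Orhan}. First I would establish the isomorphism. Since $B=B(r,s,t,u)$ is a triangle (its diagonal entries $rt\neq 0$, and $a_{mnkl}=0$ for $k>m$ or $l>n$), by Cooke's theorem it has a unique two-sided triangular inverse, which is exactly the matrix $F=(f_{mnkl})$ given explicitly in the excerpt. Define the map $T\colon B[\mathcal{C}_f]\to[\mathcal{C}_f]$ by $Tx = Bx = y$, where $y_{mn}$ is as in \eqref{eq1.3}. Linearity of $T$ is immediate from linearity of matrix multiplication. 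Injectivity follows because $Tx=0$ forces $y=0$, and then $x$ is recovered from $y$ via \eqref{eq1.4}, giving $x=0$. Surjectivity: given any $y\in[\mathcal{C}_f]$, define $x$ by the inversion formula \eqref{eq1.4}; then $x\in\Omega$ and $Bx=y$ by the inverse relation $BF=FB=I$, so $x\in B[\mathcal{C}_f]$ and $Tx=y$. Finally, $T$ is norm-preserving essentially by definition: comparing \eqref{eq2.1} with the norm $\|\cdot\|_{[\mathcal{C}_f]}$ displayed just before the theorem, we have $\|x\|_{B[\mathcal{C}_f]} = \|Bx\|_{[\mathcal{C}_f]} = \|Tx\|_{[\mathcal{C}_f]}$. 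The same argument verbatim, restricting the limit value $L$ to $0$, handles $B[\mathcal{C}_{f_0}]\cong[\mathcal{C}_{f_0}]$.

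For completeness, I would first check that $\|\cdot\|_{B[\mathcal{C}_f]}$ is genuinely a norm on $B[\mathcal{C}_f]$ (positivity using that $B$ is injective, absolute homogeneity, triangle inequality — all inherited through $T$ from the known norm on $[\mathcal{C}_f]$). Then I would invoke the general principle: if $(\mu,\|\cdot\|_\mu)$ is a Banach space and $A$ is a triangle, then $\mu_A$ with $\|x\|_{\mu_A}:=\|Ax\|_\mu$ is a Banach space linearly isometric to $\mu$; this is because an isometric isomorphism transports completeness. Concretely, let $(x^{(i)})$ be a Cauchy sequence in $B[\mathcal{C}_f]$. Then $(Bx^{(i)})$ is Cauchy in $[\mathcal{C}_f]$, which is complete (as stated in the excerpt), so $Bx^{(i)}\to y$ for some $y\in[\mathcal{C}_f]$. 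Put $x:=F y$ (i.e. $x$ defined from $y$ by \eqref{eq1.4}); then $x\in B[\mathcal{C}_f]$ with $Bx=y$, and $\|x^{(i)}-x\|_{B[\mathcal{C}_f]}=\|Bx^{(i)}-y\|_{[\mathcal{C}_f]}\to 0$. Hence $B[\mathcal{C}_f]$ is complete. Again the identical argument with $L=0$ gives completeness of $B[\mathcal{C}_{f_0}]$.

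The only genuinely delicate point — and the one the authors presumably mean when they say "the proof can be done in a quite similar way" — is the well-definedness of the domain space itself: one must be sure that for $x\in B[\mathcal{C}_f]$ the double series defining $(Bx)_{mn}$ converges in the appropriate ($bp$) sense for every $m,n$, so that $Bx$ is a bona fide element of $\Omega$ on which the $[\mathcal{C}_f]$-norm can be evaluated. Here this is trivial because $B$ is a triangle: each $(Bx)_{mn}$ is the finite sum $sux_{m-1,n-1}+stx_{m-1,n}+rux_{m,n-1}+rtx_{mn}$, so no convergence issue arises and $B_{mn}\in\lambda^{\beta(bp)}$ automatically for any double sequence space $\lambda$. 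Thus the main "obstacle" is really just bookkeeping, and the whole proof reduces to transporting the Banach-space structure of $[\mathcal{C}_f]$ and $[\mathcal{C}_{f_0}]$ through the isometry $T=B(r,s,t,u)$, exactly as in \cite[Theorem~3.1]{Orhan}. For this reason it is reasonable to state the theorem without reproducing the proof in full.
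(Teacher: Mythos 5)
Your proposal is correct and follows exactly the route the paper intends: the paper omits the proof, referring to \cite[Theorem 3.1]{Orhan}, which is the same standard argument of transporting the Banach-space structure of $[\mathcal{C}_f]$ (resp.\ $[\mathcal{C}_{f_0}]$) through the linear isometry $x\mapsto B(r,s,t,u)x$, using that $B(r,s,t,u)$ is a triangle with the explicit inverse \eqref{eq1.4}. Nothing is missing.
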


\begin{thm}
Let $s=-r, t=-u$. The inclusions $\mathcal{M}_{u}\subset B(\mathcal{C}_{f_0})$ strictly holds.
\end{thm}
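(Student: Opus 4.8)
The plan is to exploit the special algebraic form that $B(r,s,t,u)$ takes when $s=-r$ and $t=-u$. In that case $B(r,s,t,u)=B(r,-r,-u,u)$ and the action on a double sequence becomes
\begin{eqnarray*}
(Bx)_{mn}=ru\,x_{m-1,n-1}-ru\,x_{m-1,n}-ru\,x_{m,n-1}+ru\,x_{mn}=ru\,(x_{mn}-x_{m-1,n}-x_{m,n-1}+x_{m-1,n-1}),
\end{eqnarray*}
i.e.\ $Bx=ru\,\Delta(1,-1,1,-1)x$, a scalar multiple of the fourth (mixed) difference. So the first step is to record this identity and thereby reduce the claim to a statement about $\Delta(1,-1,1,-1)$: namely that for every bounded double sequence $x\in\mathcal{M}_u$, the second mixed difference $z=\Delta x$ with $z_{mn}=x_{mn}-x_{m-1,n}-x_{m,n-1}+x_{m-1,n-1}$ is strongly almost null, and then multiply by the nonzero scalar $ru$.

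The second step is the core estimate: show $\Delta x\in[\mathcal{C}_{f_0}]$ whenever $x\in\mathcal{M}_u$. For this I would compute the Cesàro-type block average of $|z_{kl}|$ directly. Writing $M=\|x\|_\infty$, one has $|z_{kl}|\le 4M$ for all $k,l$, which is not by itself enough; the gain comes from telescoping. Summing $z_{kl}$ over a block $k\in[m,m+q]$, $l\in[n,n+q']$ telescopes to $x_{m+q,n+q'}-x_{m-1,n+q'}-x_{m+q,n-1}+x_{m-1,n-1}$, whose absolute value is at most $4M$. However, $[\mathcal{C}_{f_0}]$ requires the average of the \emph{absolute values} $|z_{kl}|$ to vanish, not the absolute value of the average, so a sharper argument is needed: split the block sum as one sum over $k$ of $\bigl(\sum_l z_{kl}\bigr)$, each inner sum telescoping in $l$ to $x_{k,n+q'}-x_{k,n-1}-x_{k-1,n+q'}+x_{k-1,n-1}$, and then this is again a telescoping sum in $k$. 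The key point is that
\begin{eqnarray*}
\sum_{k=m}^{m+q}\sum_{l=n}^{n+q'}|z_{kl}|\le\sum_{l=n}^{n+q'}\Bigl|\sum_{k=m}^{m+q} z_{kl}\Bigr|+(\text{error from moving }|\cdot|\text{ inside})
\end{eqnarray*}
is \emph{false} in general, so instead I would bound $\sum_{k,l}|z_{kl}|$ by first summing in one variable with the triangle inequality only after one telescoping, getting $\sum_{k=m}^{m+q}\bigl|x_{k,n+q'}-x_{k,n-1}\bigr|+\bigl|x_{k-1,n+q'}-x_{k-1,n-1}\bigr|\le (q+1)\cdot 4M$ is still $O(q)$, giving block average $O(1/q')$. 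Symmetrically one gets $O(1/q)$. Hence the block average is $O\bigl(\min\{1/q,1/q'\}\bigr)$, uniformly in $m,n$, which tends to $0$ as $q,q'\to\infty$ in the Pringsheim sense. This establishes $\Delta x\in[\mathcal{C}_{f_0}]$, hence $Bx\in[\mathcal{C}_{f_0}]$, hence $x\in B(\mathcal{C}_{f_0})$, so $\mathcal{M}_u\subset B(\mathcal{C}_{f_0})$.

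The third step is strictness of the inclusion, which also shows $B(\mathcal{C}_{f_0})$ contains unbounded (hence genuinely new) sequences: it suffices to exhibit $x\notin\mathcal{M}_u$ with $Bx\in[\mathcal{C}_{f_0}]$. Using the representation \eqref{eq1.4}, with the present parameters $x_{mn}=\frac{1}{ru}\sum_{k,l=0}^{m,n}y_{kl}$ where $y=Bx$; choosing $y$ to be any strongly almost null sequence whose partial sums are unbounded (for instance $y$ supported on the first row, $y_{0l}=1$ for all $l$ and $y_{kl}=0$ for $k\ge1$, which lies in $\mathcal{C}_{bp0}\subset[\mathcal{C}_{f_0}]$) produces $x$ with $x_{0n}=\frac{n+1}{ru}\to\infty$, so $x\notin\mathcal{M}_u$ while $x\in B(\mathcal{C}_{f_0})$. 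Together with Step 2 this gives the strict inclusion.

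The main obstacle is Step 2, specifically handling the absolute values: the naive telescoping kills $|\sum z_{kl}|$ but $[\mathcal{C}_{f_0}]$ demands control of $\sum|z_{kl}|$, so one must telescope in \emph{one} index, apply the triangle inequality to the remaining (one-dimensional) sum of boundedly-many terms, and exploit that this produces a bound of order $q$ (resp.\ $q'$) against a normalizing factor $(q+1)(q'+1)$, leaving a factor $1/q'$ (resp.\ $1/q$) that forces the Pringsheim limit to be $0$. Getting the uniformity in $m,n$ is automatic from this estimate since the bound $4M$ does not depend on $m,n$.
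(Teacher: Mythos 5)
Your Step 2 contains a fatal error: you aim to prove the \emph{stronger} claim $Bx\in[\mathcal{C}_{f_0}]$ for every $x\in\mathcal{M}_u$, but that claim is false, and indeed the paper's very next theorem exhibits the counterexample. Take the checkerboard sequence $x_{kl}=1$ for $k+l$ even and $x_{kl}=0$ otherwise: then $z_{kl}=x_{kl}-x_{k-1,l}-x_{k,l-1}+x_{k-1,l-1}=\pm2$ for every $k,l$, so the block average of $|z_{kl}|$ is identically $2$ and never tends to $0$; hence $\Delta x\notin[\mathcal{C}_{f_0}]$ although $x\in\mathcal{M}_u$. The flaw in your estimate is the direction of the triangle inequality: telescoping in $l$ and then summing moduli over $k$ controls $\sum_{k}\bigl|\sum_{l}z_{kl}\bigr|$, but what $[\mathcal{C}_{f_0}]$ requires is $\sum_{k,l}|z_{kl}|$, and
\begin{equation*}
\sum_{k=m}^{m+q}\Bigl|\sum_{l=n}^{n+q'}z_{kl}\Bigr|\;\leq\;\sum_{k=m}^{m+q}\sum_{l=n}^{n+q'}|z_{kl}|,
\end{equation*}
so your $O(q)$ bound is a \emph{lower} bound for the quantity you need to make small, not an upper bound. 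No rearrangement of the telescoping can repair this, because the double sum of $|z_{kl}|$ genuinely is of order $qq'$ for the checkerboard.

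Fortunately the theorem only asserts $\mathcal{M}_u\subset B(\mathcal{C}_{f_0})$ with round brackets, i.e.\ $Bx\in\mathcal{C}_{f_0}$ (almost null), whose defining condition places the modulus \emph{outside} the block sum. There the full two-dimensional telescoping does the job at once: $\sum_{k=m}^{m+q}\sum_{l=n}^{n+q'}(Bx)_{kl}$ collapses to four boundary terms of $x$, giving a bound $4|ru|\,\|x\|_\infty/\bigl((q+1)(q'+1)\bigr)\to0$ uniformly in $m,n$ -- this is exactly the paper's argument, and it is where your proof should have stopped. Your Step 1 (reduction to $\Delta$) is fine up to an irrelevant sign, and your Step 3 strictness example ($y$ supported on the first row, so that $x_{0n}=(n+1)/(rt)$ is unbounded while $Bx=y\in\mathcal{C}_{bp0}\subset\mathcal{C}_{f_0}$) is correct and a legitimate alternative to the paper's choice $x_{kl}=k/(rt)$, for which $Bx=0$.
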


\begin{proof}
First, we should show that the inclusion $\mathcal{M}_{u}\subset B(\mathcal{C}_{f_0})$ holds if $s=-r, t=-u$. Let $x=(x_{kl})\in\mathcal{M}_u$, that is, there exists a positive real number $M$ such that $\|x\|_{\infty}=\sup_{k,l\in\mathbb{N}}|x_{kl}|\leq M<\infty$. Now we should show that $x=(x_{kl})\in B(\mathcal{C}_{f_0})$ which says $(Bx)_{kl}\in\mathcal{C}_{f_0}$. Then, one can derive from the following inequality that
\begin{eqnarray*}
p&-&\lim_{q,q'\to\infty}\sup_{m,n>0}\left|\frac{1}{(q+1)(q'+1)}\sum_{k=m}^{m+q}\sum_{l=n}^{n+q'}(Bx)_{kl} \right|\\
&\leq& p-\lim_{q,q'\to\infty}\sup_{m,n>0}\frac{rt}{(q+1)(q'+1)}\left|x_{m-1,n-1}-x_{m+q,n-1}-x_{m-1,n+q'}+x_{m+q,n+q'} \right|\\
&\leq&p-\lim_{q,q'\to\infty}\sup_{m,n>0}\frac{4rtM}{(q+1)(q'+1)}=0
\end{eqnarray*}
clearly $x=(x_{kl})\in B(\mathcal{C}_{f_0})$.

Now we should state here that the inclusion is strict, that is, there is a sequence $x=(x_{kl})\in B(\mathcal{C}_{f_0})\setminus \mathcal{M}_u$. Let consider the sequence $x_{kl}=\frac{k}{rt}$, for all $l\in\mathbb{N}$ which is clearly not in $\mathcal{M}_u$. We can obtain from the $B-$transform of $x$ that
\begin{eqnarray*}
	(Bx)_{k,l}=\{B(r,-r,t,-t)x\}_{kl}&=&rtx_{k-1,l-1}-rtx_{k-1,l}-rtx_{k,l-1}+rtx_{kl}\\
	&=&rt\frac{(k-1)}{rt}-rt\frac{(k-1)}{rt}-rt\frac{k}{rt}+rt\frac{k}{rt}=0.
\end{eqnarray*}
Clearly we have the consequence that $p-\lim_{q,q'\to\infty}\sup_{m,n>0}\left|\sum_{k=m}^{m+q}\sum_{l=n}^{n+q'}(Bx)_{kl}/(q+1)(q'+1) \right|=0$. This completes the proof.
\end{proof}

\begin{thm}
Let $s=-r, t=-u$. The spaces $\mathcal{M}_u$ and $\mu$ do not contain each other where $\mu=\{B[C_{f}], B[C_{f_0}]\}$. 
\end{thm}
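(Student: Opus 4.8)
The plan is to establish the two non-inclusions separately by exhibiting explicit counterexamples, exactly in the spirit of the previous theorem where the sequence $x_{kl}=k/(rt)$ was used. Recall that with $s=-r,\ t=-u$ the $B$-transform collapses to $(Bx)_{kl}=rt(x_{k-1,l-1}-x_{k-1,l}-x_{k,l-1}+x_{kl})$, the (scaled) second mixed difference.

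First I would show $\mathcal{M}_u\not\subset\mu$, i.e. that there is a bounded double sequence lying outside $B[\mathcal{C}_f]$ (hence outside $B[\mathcal{C}_{f_0}]$ as well, since $B[\mathcal{C}_{f_0}]\subset B[\mathcal{C}_f]$). The natural candidate is a bounded sequence whose second mixed difference fails to be strongly almost convergent. Take for instance $x_{kl}=(-1)^{k}$ for all $l$, or more robustly $x_{kl}=(-1)^{kl}$; then a short computation gives $(Bx)_{kl}=rt\big((-1)^{kl}-(-1)^{(k-1)l}-(-1)^{k(l-1)}+(-1)^{(k-1)(l-1)}\big)$, which takes values in $\{0,\pm 4rt\}$ in a pattern that does not average to a single limit uniformly; one then checks that the Cesàro-type means over blocks $\sum_{k=m}^{m+q}\sum_{l=n}^{n+q'}|(Bx)_{kl}-L|/((q+1)(q'+1))$ do not tend to $0$ for any $L$, uniformly in $m,n$. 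Concretely, picking $m,n$ appropriately for each $q,q'$ one keeps the block average bounded away from zero, which already defeats strong almost convergence. (If one prefers an even cleaner example, $x_{kl}=(-1)^{k}$ gives $(Bx)_{kl}=rt\big((-1)^{k}-(-1)^{k-1}-(-1)^{k}+(-1)^{k-1}\big)=0$, which is \emph{too} nice — it lies in $B[\mathcal{C}_{f_0}]$ — so that particular choice must be discarded; the point is that one must choose an oscillation in \emph{both} indices simultaneously so that the mixed difference does not vanish.)

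Second I would show $\mu\not\subset\mathcal{M}_u$, i.e. there is an unbounded sequence in $B[\mathcal{C}_{f_0}]$ (which is inside $B[\mathcal{C}_f]$, giving the other direction at once). This is the easy half and is already essentially done in the preceding theorem: the sequence $x_{kl}=k/(rt)$ is unbounded, and its $B$-transform is identically $0$, so trivially $x\in B[\mathcal{C}_{f_0}]\subset B[\mathcal{C}_f]$ while $x\notin\mathcal{M}_u$. I would simply re-invoke that computation.

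The main obstacle is the first half: one must produce a bounded sequence whose mixed second difference is genuinely \emph{not} strongly almost convergent, and for this the oscillation has to be arranged in both coordinates at once (a sequence constant in one index has vanishing mixed difference and thus lands in $B[\mathcal{C}_{f_0}]$). The verification that the block averages of $|(Bx)_{kl}-L|$ stay bounded away from $0$ uniformly — for every candidate limit $L$ — is the step requiring genuine (if routine) estimation; I would do it by fixing $q,q'$, choosing the corner $(m,n)$ so that the $\pm 4rt$ pattern of $(Bx)_{kl}$ fills a positive fraction of the block, and concluding the average is $\geq c>0$ independently of $q,q'$. Everything else is bookkeeping with the definitions of $B[\mathcal{C}_f]$, $B[\mathcal{C}_{f_0}]$ and the norm \eqref{eq2.1}.
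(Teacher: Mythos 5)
Your proposal is correct and follows essentially the same route as the paper: a bounded checkerboard sequence whose $B$-transform is a nonvanishing checkerboard (the paper uses the indicator of $k+l$ even, giving $(Bx)_{kl}=2rt(-1)^{k+l}$; your $(-1)^{kl}$ gives the same $\pm 2rt$ pattern, not values in $\{0,\pm 4rt\}$ as you wrote, which if anything makes the lower bound on the block averages of $|(Bx)_{kl}-L|$ cleaner), combined with an unbounded sequence annihilated by $B$. For the second half your choice $x_{kl}=k/(rt)$, with $(Bx)_{kl}\equiv 0$, is actually safer than the paper's $k(-1)^{l}/(rt)$, whose $B$-transform works out to $2(-1)^{l}$ and is therefore almost null but \emph{not} strongly almost null.
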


\begin{proof}
To proof this theorem, it is essential to show that there is at least one sequence in $B[\mathcal{C}_{f_0}]\cap \mathcal{M}_u$, $B[\mathcal{C}_{f_0}]\setminus \mathcal{M}_u$, and $\mathcal{M}_u\setminus B[\mathcal{C}_{f_0}]$. If we consider the sequences $e$ and $\frac{k(-1)^l}{rt}$, then clearly these sequences belongs to the sets $B[\mathcal{C}_{f_0}]\cap \mathcal{M}_u$, $B[\mathcal{C}_{f_0}]\setminus \mathcal{M}_u$, respectively. Now, let consider the following sequence $x=(x_{kl})\in\mathcal{M}_u$ as
\begin{eqnarray*}
x_{kl}=\left\{\begin{array}{ccl}
 1&,& ~~k+l~even,\\
0&,&~~othervise
\end{array}\right.
\end{eqnarray*}
Then, clearly we have
\begin{eqnarray*}
p-\lim_{q,q'\to\infty}\sup_{m,n>0}\frac{1}{(q+1)(q'+1)}\sum_{k=m}^{m+q}\sum_{l=n}^{n+q'}\left|(Bx)_{kl} \right|&=&p-\lim_{q,q'\to\infty}\sup_{m,n>0}\frac{rt}{(q+1)(q'+1)}\sum_{k=m}^{m+q}\sum_{l=n}^{n+q'}\left|2(-1)^{k+l} \right|\\
&=&2rt
\end{eqnarray*}
Therefore, $x\in\mathcal{M}_u\setminus B[\mathcal{C}_{f_0}]$.This completes the proof.
\end{proof}

\begin{thm}
Let $s=-r, t=-u$. The following inclusion relations strictly hold.
\begin{enumerate}
\item[(i)] $[\mathcal{C}_{f}]\subset B[\mathcal{C}_{f}]$ .
\item[(ii)]  $[\mathcal{C}_{f_0}]\subset B[\mathcal{C}_{f_0}]$.
\item[(iii)]  $B[\mathcal{C}_{f}]\subset B(\mathcal{C}_{f})$.
\item[(iv)] $B[\mathcal{C}_{f_0}]\subset B(\mathcal{C}_{f_0})$.
\end{enumerate}
\end{thm}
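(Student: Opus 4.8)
The plan is to prove each of the four inclusions by exhibiting a separating sequence together with the straightforward containment argument, relying throughout on the linear isomorphisms and the string of strict inclusions $\mathcal{C}_{bp}\subset[\mathcal{C}_{f_0}]\subset[\mathcal{C}_f]\subset\mathcal{C}_{f_0}\subset\mathcal{C}_f\subset\mathcal{M}_u$ established in Section~2, and on the definitions of the four $B$-domains. For items (iii) and (iv) the containment is essentially immediate: from the inequality
\begin{eqnarray*}
\sup_{m,n>0}\left|\frac{1}{(q+1)(q'+1)}\sum_{k=m}^{m+q}\sum_{l=n}^{n+q'}\big[(Bx)_{kl}-L\big]\right|\leq \sup_{m,n>0}\frac{1}{(q+1)(q'+1)}\sum_{k=m}^{m+q}\sum_{l=n}^{n+q'}\left|(Bx)_{kl}-L\right|
\end{eqnarray*}
already quoted in the text (applied with $y=Bx$ in place of $x$), strong almost convergence of $Bx$ forces almost convergence of $Bx$, hence $B[\mathcal{C}_f]\subset B(\mathcal{C}_f)$ and $B[\mathcal{C}_{f_0}]\subset B(\mathcal{C}_{f_0})$. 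For strictness I would take the sequence $y=(y_{kl})$ with $y_{kl}=(-1)^l$ for all $k$, which lies in $\mathcal{C}_f\setminus[\mathcal{C}_f]$ as noted earlier, and pull it back through the inverse matrix $F$ via \eqref{eq1.4} to obtain $x$ with $Bx=y$; then $x\in B(\mathcal{C}_f)\setminus B[\mathcal{C}_f]$, and similarly (after centering) for the null case.

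For items (i) and (ii), the containment $[\mathcal{C}_f]\subset B[\mathcal{C}_f]$ requires showing that if $x\in[\mathcal{C}_f]$ then $Bx\in[\mathcal{C}_f]$. Here I would use that $B(r,s,t,u)$ with $s=-r,\ t=-u$ acts on $x$ by $(Bx)_{kl}=rt(x_{k-1,l-1}-x_{k-1,l}-x_{k,l-1}+x_{kl})$, together with the fact that $[\mathcal{C}_f]$ is a $BK$-type space closed under shifts and linear combinations; more directly, if $[f_2]\text{-}\lim x = L_1$ then each of the four shifted copies of $x$ has $[f_2]$-limit $L_1$ (the Ces\`aro-type averages defining strong almost convergence are insensitive to a bounded index shift), so their alternating sum has $[f_2]$-limit $rt(L_1-L_1-L_1+L_1)=0$, giving $Bx\in[\mathcal{C}_{f_0}]\subset[\mathcal{C}_f]$. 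This simultaneously handles (ii). For strictness of (i) and (ii) I would again invoke the unbounded example from the preceding theorem, $x_{kl}=k/(rt)$ for all $l$, for which $Bx=0\in[\mathcal{C}_{f_0}]$ so $x\in B[\mathcal{C}_{f_0}]\subset B[\mathcal{C}_f]$, while $x\notin\mathcal{M}_u\supset[\mathcal{C}_f]$; hence $x\in B[\mathcal{C}_f]\setminus[\mathcal{C}_f]$ and $x\in B[\mathcal{C}_{f_0}]\setminus[\mathcal{C}_{f_0}]$.

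The main obstacle I anticipate is the bookkeeping in the shift-invariance claim: one must check carefully that replacing $x_{kl}$ by, say, $x_{k-1,l}$ changes the double Ces\`aro average $\frac{1}{(q+1)(q'+1)}\sum_{k=m}^{m+q}\sum_{l=n}^{n+q'}|x_{kl}-L_1|$ only by boundary terms of order $O(1/q)+O(1/q')$ uniformly in $m,n$, so that the $p$-limit as $q,q'\to\infty$ is unaffected; the negative indices that appear (with the convention $x_{kl}=0$ for $k<0$ or $l<0$) contribute only finitely many such boundary rows/columns and are harmless. Once this uniform-in-$(m,n)$ estimate is in hand, items (i)--(iv) all follow by combining it with the already-quoted inequality and the strict inclusions of Section~2, so I would present that estimate as a short lemma-style paragraph and then dispatch the four parts in sequence.
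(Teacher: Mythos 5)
Your argument follows the paper's proof essentially step for step: containment in (i)--(ii) via the triangle inequality applied to the four shifted copies of $x$ together with shift-invariance of the strong almost convergence averages, containment in (iii)--(iv) from $[\mathcal{C}_f]\subset\mathcal{C}_f$ applied to $Bx$, and strictness of (iii)--(iv) by pulling $y_{kl}=(-1)^l$ back through the inverse matrix. The one place you diverge is the strictness witness for (i)--(ii): you take $x_{kl}=k/(rt)$, for which $Bx=0\in[\mathcal{C}_{f_0}]$, whereas the paper takes $x_{kl}=k(-1)^l/(rt)$, whose $B(r,-r,t,-t)$-transform works out to $2(-1)^l\notin[\mathcal{C}_{f_0}]$ -- so your choice is the one that actually certifies $x\in B[\mathcal{C}_{f_0}]\setminus[\mathcal{C}_{f_0}]$, and it quietly repairs a defect in the paper's example.
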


\begin{proof}
First, we consider to prove the inclusion $(i)$. Suppose that the sequence $x=(x_{kl})\in[\mathcal{C}_{f}]$. Then we have,
\begin{eqnarray*}
p-\lim_{q,q'\to\infty}\sup_{m,n>0}\frac{1}{(q+1)(q'+1)}\sum_{k=m}^{m+q}\sum_{l=n}^{n+q'}\left|x_{kl}-L\right|=0
\end{eqnarray*}
is uniform in $m,n\in\mathbb{N}$ for some $L\in\mathbb{C}$. Now, we need to show that  $B-$transform of $x=(x_{kl})$ is also in $[\mathcal{C}_{f}]$. Since $s=-r, t=-u$, we have
\begin{eqnarray*}
	p&-&\lim_{q,q'\to\infty}\sup_{m,n>0}\frac{1}{(q+1)(q'+1)}\sum_{k=m}^{m+q}\sum_{l=n}^{n+q'}\left|trx_{k-1,l-1}+-rtx_{k-1,l}-rtx_{k,l-1}+rtx_{kl}\right|\\
	&\leq&p-\lim_{q,q'\to\infty}\sup_{m,n>0}\frac{rt}{(q+1)(q'+1)}\sum_{k=m}^{m+q}\sum_{l=n}^{n+q'}\left|x_{k-1,l-1}-L\right|\\
	&+&p-\lim_{q,q'\to\infty}\sup_{m,n>0}\frac{rt}{(q+1)(q'+1)}\sum_{k=m}^{m+q}\sum_{l=n}^{n+q'}\left|x_{k-1,l}-L\right|\\
	&+&p-\lim_{q,q'\to\infty}\sup_{m,n>0}\frac{rt}{(q+1)(q'+1)}\sum_{k=m}^{m+q}\sum_{l=n}^{n+q'}\left|x_{k,l-1}-L\right|\\
	&+&p-\lim_{q,q'\to\infty}\sup_{m,n>0}\frac{rt}{(q+1)(q'+1)}\sum_{k=m}^{m+q}\sum_{l=n}^{n+q'}\left|x_{kl}-L\right|\\
	&=&0
\end{eqnarray*}
is uniform in $m,n\in\mathbb{N}$. It says that $x=(x_{kl})\in B[\mathcal{C}_{f}]$. The inclusion $(ii)$ can be shown in a similar way. Let define a double sequence 
\begin{eqnarray}\label{eq1.a}
x_{kl}=\frac{k(-1)^l}{rt}
\end{eqnarray}
It is clear that the double sequence $x_{kl}$ defined by (\ref{eq1.a}) is not bounded. But $B(r,s,t,u)-$transform of $x_{kl}$ is bounded if $s=-r, t=-u$. This consequence says that $x_{kl}$ is in $B[\mathcal{C}_{f_0}]\setminus[\mathcal{C}_{f_0}]$ and similarly in $B[\mathcal{C}_{f}]\setminus[\mathcal{C}_{f}]$.

The inclusions $(iii)$ and $(iv)$ is clearly seen by considering a double sequence $x=(x_{kl})\in B[\mathcal{C}_{f}](or~B[\mathcal{C}_{f_0}])$, says $(Bx)_{kl}\in[\mathcal{C}_{f}](or~[\mathcal{C}_{f_0}])$. Since $[\mathcal{C}_{f}](or
~[\mathcal{C}_{f_0}])\subset\mathcal{C}_{f}(or~\mathcal{C}_{f_0})$, then $(Bx)_{kl}\in\mathcal{C}_{f}(or~\mathcal{C}_{f_0})$, says $x_{kl}\in B(\mathcal{C}_{f})(or~B(\mathcal{C}_{f_0}))$ which says $B[\mathcal{C}_{f}]\subset B(\mathcal{C}_{f})$ and $B[\mathcal{C}_{f_0}]\subset B(\mathcal{C}_{f_0})$. Let define a double sequence $x_{kl}$ by
\begin{eqnarray}\label{eq1.b}
	(Bx)_{kl}=(-1)^l,\textit{ for all }k\in\mathbb{N},i.e.,(Bx)_{kl}=\begin{bmatrix}
	1 & -1 & 1 & -1 & 1 &\cdots\\
	1 & -1 & 1 & -1 & 1 &\cdots\\
	1 & -1 & 1 & -1 & 1 &\cdots\\
	1 & -1 & 1 & -1 & 1 &\cdots\\
	\vdots & \vdots & \vdots & \vdots & \vdots& \ddots\\
	\end{bmatrix},
\end{eqnarray}

Now, it is clear that the double sequence $(Bx)_{kl}$ defined by (\ref{eq1.b}) is in the set $\mathcal{C}_{f}\setminus[\mathcal{C}_{f}]$. Therefore, $x=(x_{kl})\in B(\mathcal{C}_{f})\setminus B[\mathcal{C}_{f}]$ where $x_{kl}=\frac{1}{rt}\sum_{i,j=0}^{k,l}\left(\frac{-s}{r}\right)^{k-i}\left(\frac{-u}{t}\right)^{l-j}(-1)^j$ for all $k\in\mathbb{N}$. This concludes the proof.
\end{proof}

\section{Dual spaces of the sequence space $B[\mathcal{C}_f]$}

In this present section, firstly, we calculate the $\alpha-$dual of the space $B[\mathcal{C}_f]$. Then, we state some needed Lemmas and notations to calculate the $\beta(bp)-$dual and $\gamma-$dual of the space $B[\mathcal{C}_f]$.

\begin{thm}
	Let $\left|s/r\right|,\left|u/t\right|<1$. The $\alpha-$dual of the space $B[\mathcal{C}_f]$ is the space $\mathcal{L}_u$.
\end{thm}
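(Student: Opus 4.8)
The plan is to establish a double inclusion: $\big(B[\mathcal{C}_f]\big)^{\alpha}\supset\mathcal{L}_u$ and $\big(B[\mathcal{C}_f]\big)^{\alpha}\subset\mathcal{L}_u$. The starting point is the relation $(\ref{eq1.4})$, which gives $x_{mn}=\frac{1}{rt}\sum_{k,l=0}^{m,n}(-s/r)^{m-k}(-u/t)^{n-l}y_{kl}$ where $y=B(r,s,t,u)x\in[\mathcal{C}_f]$. Since $[\mathcal{C}_f]\subset\mathcal{M}_u$, any $x\in B[\mathcal{C}_f]$ has $y=(y_{kl})$ bounded, say $\|y\|_\infty\le K$. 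Under the hypothesis $|s/r|,|u/t|<1$, one gets the crucial pointwise bound
\begin{eqnarray*}
|x_{mn}|\le\frac{K}{|rt|}\sum_{k=0}^{m}\sum_{l=0}^{n}\left|\frac{s}{r}\right|^{m-k}\left|\frac{u}{t}\right|^{n-l}\le\frac{K}{|rt|}\cdot\frac{1}{1-|s/r|}\cdot\frac{1}{1-|u/t|}=:K'<\infty,
\end{eqnarray*}
so in fact $B[\mathcal{C}_f]\subset\mathcal{M}_u$ with a uniform bound controlled by $\|y\|_\infty$. This already shows the first inclusion: if $a=(a_{mn})\in\mathcal{L}_u$, then $\sum_{m,n}|a_{mn}x_{mn}|\le K'\sum_{m,n}|a_{mn}|<\infty$ for every $x\in B[\mathcal{C}_f]$, hence $a\in\big(B[\mathcal{C}_f]\big)^{\alpha}$, i.e. $\mathcal{L}_u\subset\big(B[\mathcal{C}_f]\big)^{\alpha}$.

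For the reverse inclusion $\big(B[\mathcal{C}_f]\big)^{\alpha}\subset\mathcal{L}_u$, I would argue by contraposition: given $a\notin\mathcal{L}_u$, I must exhibit an $x\in B[\mathcal{C}_f]$ with $ax\notin\mathcal{L}_u$. The natural candidates are sequences $x$ whose $B$-transform $y$ is a very simple member of $[\mathcal{C}_{f_0}]$ (even of $\mathcal{C}_{bp}$), for instance $y=e_{kl}$ a single unit coordinate sequence, or $y$ supported finitely — these are trivially strongly almost null, so $x=F y\in B[\mathcal{C}_{f_0}]\subset B[\mathcal{C}_f]$. The standard device is: since $\sum_{m,n}|a_{mn}|=\infty$, one can partition $\mathbb{N}\times\mathbb{N}$ into blocks and, on each block, pick a value of $y$ (from among finitely-supported unit sequences, suitably placed) so that the corresponding $x$-coordinates, where $x=Fy$, pair against $a$ to produce a divergent sum while $y$ itself stays bounded and null in the strongly-almost sense. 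Because $F$ is triangular and the entries $f_{mnkl}$ decay geometrically, the contribution of each finitely supported block of $y$ to $x$ is controlled, and one can sum a sequence of such blocks; the key point is that $x_{mn}$ is, up to the geometric weights, essentially a partial sum of the $y_{kl}$'s, so choosing $y$ appropriately lets $|x_{mn}|$ be bounded below on an infinite set where $a$ is large.

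The main obstacle I anticipate is the reverse inclusion — specifically, engineering the counterexample sequence so that $y=Bx$ genuinely lies in $[\mathcal{C}_{f_0}]$ (or $[\mathcal{C}_f]$) while $ax\notin\mathcal{L}_u$. One has to be careful because the map $x\mapsto y$ is the \emph{forward} difference $B$, so it is cleaner to construct $y$ directly in $[\mathcal{C}_{f_0}]$ and \emph{define} $x:=Fy$ via $(\ref{eq1.4})$; then $x\in B[\mathcal{C}_{f_0}]$ automatically by Theorem~\ref{thm2.0}, and the only work is the combinatorial choice of $y$ (and the blocks of indices) forcing $\sum_{m,n}|a_{mn}x_{mn}|=\infty$. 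A convenient simplification: it suffices to produce, for each $N$, a finitely supported $y^{(N)}$ with $\|y^{(N)}\|_\infty\le 1$ such that $\sum_{m,n}|a_{mn}(Fy^{(N)})_{mn}|\ge N$ on a region not yet used, and then glue the $y^{(N)}$ on disjoint index blocks — the resulting $y$ is still strongly almost null since on large Cesàro windows only finitely many blocks intervene and each contributes a vanishing average. Once the counterexample is in place, combining it with the first inclusion yields $\big(B[\mathcal{C}_f]\big)^{\alpha}=\mathcal{L}_u$.
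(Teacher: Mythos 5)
Your first inclusion, $\mathcal{L}_u\subset\{B[\mathcal{C}_f]\}^{\alpha}$, is correct and is where the hypothesis $|s/r|,|u/t|<1$ is actually used: the geometric bound $|x_{mn}|\leq \frac{\|Bx\|_\infty}{|rt|(1-|s/r|)(1-|u/t|)}$ shows $B[\mathcal{C}_f]\subset\mathcal{M}_u$, and the claim follows. This matches the argument the paper defers to \cite[Theorem 4.1]{Orhan}.

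The reverse inclusion is where you have a genuine gap: you give a plan, not a proof, and two of its specific ingredients do not survive scrutiny. First, taking $y$ to be a unit coordinate sequence (or any fixed finitely supported $y$) cannot work here, precisely because of the hypothesis $|s/r|,|u/t|<1$: then $(Fy)_{mn}$ decays geometrically away from the support of $y$, so $\sum_{m,n}|a_{mn}(Fy)_{mn}|$ converges for \emph{every bounded} $a$ --- e.g.\ $a_{mn}=\frac{1}{(m+1)(n+1)}\notin\mathcal{L}_u$ pairs summably with every such $Fy$. To defeat this you are forced to let the supports of the $y^{(N)}$ (equivalently, of the $x$-blocks $Fy^{(N)}$) grow without bound, and then the unproved step is the verification that the glued $y$ still lies in $[\mathcal{C}_f]$: the Ces\`{a}ro averages of $|y_{kl}-L|$ must tend to $0$ \emph{uniformly} in the window position $(m,n)$, which imposes real density constraints on the blocks that you do not check. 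Second, the entire difficulty is illusory: the all-ones sequence $e$ satisfies $(Be)_{mn}=(r+s)(t+u)$ for $m,n\geq1$, so $Be\in\mathcal{C}_{bp}\subset[\mathcal{C}_f]$ and hence $e\in B[\mathcal{C}_f]$; consequently any $a\in\{B[\mathcal{C}_f]\}^{\alpha}$ satisfies $\sum_{k,l}|a_{kl}|=\sum_{k,l}|a_{kl}e_{kl}|<\infty$, i.e.\ $a\in\mathcal{L}_u$, in one line. The paper's own proof likewise tests $a$ against a single explicit member of the space (it uses $x_{kl}=k(-1)^l/(rt)$, which is tailored to the case $s=-r$, $t=-u$ rather than to the stated hypothesis), so the intended route is ``evaluate on one witness,'' not a gliding-hump construction. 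As written, your reverse inclusion would not compile into a proof without substantial repair.
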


\begin{proof}
To prove $\left\{B[\mathcal{C}_f]\right\}^{\alpha}=\mathcal{L}_u$, we should show that the inclusions $\mathcal{L}_u\subset \left\{B[\mathcal{C}_f]\right\}^{\alpha}$ and $\left\{B[\mathcal{C}_f]\right\}^{\alpha}\subset \mathcal{L}_u$ hold. The first inclusion can be proved by the similar way as in \cite[Theorem 4.1]{Orhan}. So we pass the repetition.

For the second inclusion, suppose that $(a_{kl})\in \left\{B[\mathcal{C}_f]\right\}^{\alpha}\subset\mathcal{L}_u$. Then, we have $\sum_{k,l}|a_{kl}x_{kl}|<\infty$ for all $x=(x_{kl})\in B[\mathcal{C}_f]$. 

Let us define a double sequence $x=(x_{kl})$ as  $x_{kl}=\{k(-1)^{l}/(rt)\}$. for $\left|s/r\right|,\left|u/t\right|<1$, it is clear that $|(Bx)_{kl}|\leq 0$, says $x=(x_{kl})\in B[\mathcal{C}_f]$ but
	\begin{eqnarray*}
		\sum_{k,l}|a_{kl}x_{kl}|=\frac{1}{|rt|}\sum_{k,l}|a_{kl}|k=\infty.
	\end{eqnarray*}
This means that $(a_{kl})\notin \left\{B(\mathcal{C}_f)\right\}^{\alpha}$ which is a contradiction. Hence, the sequence $(a_{kl})$ must be in $\mathcal{L}_u$. So, the inclusion $\left\{B(\mathcal{C}_f)\right\}^{\alpha}\subset \mathcal{L}_u$ holds. This is what we proposed.
\end{proof}

\begin{lem}\cite[Theorem 2.2]{ZMM} The following statements hold:
	
(a) The matrix $A=(a_{mnkl})\in(\mathcal{C}_{bp};\mathcal{C}_{bp})$ if and only if it satifies the following conditions.
\begin{eqnarray}
\label{eq1.5}&&\sup_{m,n\in\mathbb{N}}\sum_{k,l}|a_{mnkl}|<\infty,\\
\label{eq1.6}&&bp-\lim_{m,n\to\infty}a_{mnkl}=a_{kl}~ exists~ for~ k,l\in\mathbb{N},\\
\label{eq1.7}&&bp-\lim_{m,n\to\infty}\sum_{k,l}a_{mnkl}=v~ exists,\\
\label{eq1.8}&&bp-\lim_{m,n\to\infty}\sum_{k}|a_{mn,k,l_0}-a_{k,l_0}|=0 ~for ~l_0\in\mathbb{N},\\
\label{eq1.9}&&bp-\lim_{m,n\to\infty}\sum_{l}|a_{mn,k_0,l}-a_{k_0,l}|=0~ for ~k_0\in\mathbb{N}
\end{eqnarray}
In this case, $a=(a_{kl})\in\mathcal{L}_u$ and 
\begin{eqnarray*}
bp-\lim_{m,n}[Ax]_{mn}=\sum_{k,l}a_{kl}x_{kl}+\left(v-\sum_{k,l}a_{kl} \right)bp-\lim_{mn}x_{mn}, ~(x\in\mathcal{C}_{bp})
\end{eqnarray*}

(b) The matrix  $A=(a_{mnkl})\in(\mathcal{C}_{bp};\mathcal{C}_{bp})$ and $bp-\lim Ax=bp-\lim_{mn}x_{mn},~~(x\in\mathbb{C}_{bp})$ if and only if the conditions (\ref{eq1.5})-(\ref{eq1.9}) hold with $a_{kl}=0$ for all $k,l\in\mathbb{N}$ and $v=1$.
\end{lem}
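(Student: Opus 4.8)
The plan is to prove part (a) by establishing separately that the five conditions are \emph{necessary} and that they are \emph{sufficient} (together with the displayed limit formula), and then to read part (b) directly off that limit formula. The tools needed are: the unit double sequences $e^{kl}$ (entry $1$ at position $(k,l)$, $0$ elsewhere) and $e=(1,1,1,\dots)$, all of which lie in $\mathcal{C}_{bp}$; the identification $\{\mathcal{C}_{bp}\}^{\beta(bp)}=\mathcal{L}_u$ together with the fact that for $a=(a_{kl})\in\mathcal{L}_u$ the functional $x\mapsto\sum_{k,l}a_{kl}x_{kl}$ on $\mathcal{C}_{bp}$ has norm exactly $\sum_{k,l}|a_{kl}|$; the Banach--Steinhaus theorem; the classical Schur theorem characterising single matrices that map $\ell_\infty$ into $c$; and, for sufficiency, a splitting of the double index set into a finite block, a far corner, and two strips.

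\emph{Necessity.} Assume $A\in(\mathcal{C}_{bp};\mathcal{C}_{bp})$. Applying $A$ to $e^{kl}\in\mathcal{C}_{bp}$ gives $(Ae^{kl})_{mn}=a_{mnkl}$, which must be $bp$-convergent, so (\ref{eq1.6}) holds; applying $A$ to $e$ gives $(Ae)_{mn}=\sum_{k,l}a_{mnkl}$, whence (\ref{eq1.7}). For (\ref{eq1.5}): since the $A$-transform exists on all of $\mathcal{C}_{bp}$, each row $A_{mn}=(a_{mnkl})_{k,l}$ lies in $\{\mathcal{C}_{bp}\}^{\beta(bp)}=\mathcal{L}_u$, so $f_{mn}(x):=(Ax)_{mn}$ is a bounded linear functional on $\mathcal{C}_{bp}$ with $\|f_{mn}\|=\sum_{k,l}|a_{mnkl}|$ (the lower bound comes from feeding in finitely supported sign sequences of norm $\le 1$); as $Ax\in\mathcal{M}_u$ for every $x$, the family $(f_{mn})$ is pointwise bounded, so Banach--Steinhaus yields $\sup_{m,n}\sum_{k,l}|a_{mnkl}|<\infty$. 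Finally, for (\ref{eq1.8})--(\ref{eq1.9}), fix $l_0$ and restrict attention to double sequences $x$ supported on the single column $l=l_0$ with arbitrary bounded entries $(\xi_k)$; each such $x$ lies in $\mathcal{C}_{bp0}\subset\mathcal{C}_{bp}$ and $(Ax)_{mn}=\sum_k a_{mn,k,l_0}\xi_k$, so the single matrix $\bigl(a_{mn,k,l_0}\bigr)$ with rows indexed by the pair $(m,n)$ and columns by $k$ maps $\ell_\infty$ into $c$ with respect to the $bp$-limit over $(m,n)$; Schur's theorem then gives precisely (\ref{eq1.8}), and the symmetric argument on single rows gives (\ref{eq1.9}).

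\emph{Sufficiency and the limit formula.} Assume (\ref{eq1.5})--(\ref{eq1.9}). By (\ref{eq1.5}) each $A_{mn}\in\mathcal{L}_u\subset\{\mathcal{C}_{bp}\}^{\beta(bp)}$, so $Ax$ is defined for every $x\in\mathcal{C}_{bp}$, and combining (\ref{eq1.5}) and (\ref{eq1.6}) with Fatou's lemma gives $a=(a_{kl})\in\mathcal{L}_u$. Given $x\in\mathcal{C}_{bp}$ with $bp\text{-}\lim x_{mn}=\ell$, write $x=\ell e+z$ with $z\in\mathcal{C}_{bp0}$, so that
\[
(Ax)_{mn}=\ell\sum_{k,l}a_{mnkl}+\sum_{k,l}a_{mnkl}z_{kl}.
\]
The first term tends to $\ell v$ by (\ref{eq1.7}). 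For the second, I would show $\sum_{k,l}a_{mnkl}z_{kl}\to\sum_{k,l}a_{kl}z_{kl}$: given $\varepsilon>0$, choose $K,L$ with $|z_{kl}|<\varepsilon$ whenever $k\ge K$ and $l\ge L$, and split $\sum_{k,l}(a_{mnkl}-a_{kl})z_{kl}$ over the block $\{k<K,\ l<L\}$ (finitely many terms, controlled by (\ref{eq1.6})), the far corner $\{k\ge K,\ l\ge L\}$ (bounded by $\varepsilon\bigl(\sup_{m,n}\sum_{k,l}|a_{mnkl}|+\sum_{k,l}|a_{kl}|\bigr)$ via (\ref{eq1.5})), and the two strips $\{k<K,\ l\ge L\}$ and $\{k\ge K,\ l<L\}$, where $z$ is only bounded but $\sum_l|a_{mn,k,l}-a_{kl}|\to 0$ for each of the finitely many $k<K$ by (\ref{eq1.9}), and symmetrically by (\ref{eq1.8}). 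Hence
\[
bp\text{-}\lim_{m,n}(Ax)_{mn}=\ell v+\sum_{k,l}a_{kl}z_{kl}=\sum_{k,l}a_{kl}x_{kl}+\Bigl(v-\sum_{k,l}a_{kl}\Bigr)\,bp\text{-}\lim_{m,n}x_{mn},
\]
which in particular shows $Ax\in\mathcal{C}_{bp}$ and completes (a). For part (b), if moreover $bp\text{-}\lim Ax=bp\text{-}\lim x_{mn}$ for all $x\in\mathcal{C}_{bp}$, the formula of (a) forces $\sum_{k,l}a_{kl}x_{kl}+(v-\sum_{k,l}a_{kl})\ell=\ell$ for every $x$; testing with $x=e^{kl}$ (where $\ell=0$) gives $a_{kl}=0$ for all $k,l$, and then $x=e$ (where $\ell=1$) gives $v=1$; conversely these two values make the formula collapse to $\ell$.

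The main obstacle is the sufficiency direction: the two \emph{strip} estimates genuinely need conditions (\ref{eq1.8})--(\ref{eq1.9}) in exactly the stated uniform form, and throughout one must be careful that all limits are true two-dimensional $bp$-(Pringsheim) limits rather than iterated limits — this is what makes the Banach--Steinhaus step, the Schur-type reduction, and the interchange of limits in the final computation slightly delicate. The necessity of (\ref{eq1.8})--(\ref{eq1.9}) is the other nontrivial point, and it rests on correctly reducing the mapping property to a one-parameter $\ell_\infty\to c$ problem along a fixed column (row).
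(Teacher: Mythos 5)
There is nothing in the paper to compare your argument against: the author imports this lemma verbatim from \cite{ZMM} (their Theorem 2.2) and gives no proof. Judged on its own, your proposal is correct and is essentially the classical Robison--Hamilton/Hahn--Schur argument transplanted to the double-sequence setting: test sequences $e^{kl}$ and $e$ for (\ref{eq1.6})--(\ref{eq1.7}), Banach--Steinhaus on the Banach space $\mathcal{C}_{bp}$ together with $\{\mathcal{C}_{bp}\}^{\beta(bp)}=\mathcal{L}_u$ for (\ref{eq1.5}), a Schur-type reduction along a fixed column (row) for (\ref{eq1.8})--(\ref{eq1.9}), and the block/corner/strip decomposition for sufficiency, from which (b) falls out by testing the limit formula on $e^{kl}$ and $e$. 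Two points you flag deserve to be made explicit if this were written out in full. First, the single-index Hahn--Schur theorem does not apply literally to a family indexed by the Pringsheim filter; one argues by contradiction, extracting a sequence $(m_j,n_j)$ with $m_j,n_j\ge j$ along which $\sum_k|a_{m_jn_j,k,l_0}-a_{k,l_0}|\ge\varepsilon$, noting that (\ref{eq1.6}) and the $bp$-convergence of $(Ax)_{mn}$ pass to such cofinal sequences, and only then invoking the classical theorem. Second, your Banach--Steinhaus step needs the convention (used throughout the paper) that $bp$-convergence of $Ax$ and of the partial sums defining $A_{mn}x$ includes boundedness, since Pringsheim convergence alone does not give pointwise boundedness of the functionals $f_{mn}$; with that convention both the norm identity $\|f_{mn}\|=\sum_{k,l}|a_{mnkl}|$ (via finitely supported sign patterns, which lie in $\mathcal{C}_{bp}$) and the uniform boundedness conclusion are sound. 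With these two clarifications the proof is complete; your approach also has the virtue of making visible exactly where each of the five conditions is used, which the paper, by citing the result, does not.
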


\begin{defin}\cite{Basarir}
A subset $E\subset \mathbb{N}\times\mathbb{N}$  is said to be uniformly of zero density if and only if the number of elements of $E$ which lie in the rectangle $D$ is $o(pq)$ as $p,q\to\infty$, uniformly in $m,n\geq 0$, where $D=\left\{ (j,k):m\leq j\leq m+p-1, n\leq k\leq n+q-1 \right\}$.
\end{defin}

\begin{lem}\cite{Basarir}\label{lem3.1}
Four-dimensioanl matrix $A=(a_{mnkl})\in([\mathcal{C}_f];\mathcal{C}_{bp})$ with $bp-\lim Ax=[f_2]-\lim x$ if and only if $A$ is regular,that is, $A=(a_{mnkl})\in(\mathcal{C}_{bp};\mathcal{C}_{bp})$ with $bp-\lim Ax=bp-\lim_{kl} x_{kl}$ and 
\begin{eqnarray}
&&\label{eq3.11}\lim_{m,n\to\infty}\sum_{k,l\in E}\left| \Delta_{10}a_{mnkl}\right|\to 0,\\
&&\label{eq3.12}\lim_{m,n\to\infty}\sum_{k,l\in E}\left| \Delta_{01}a_{mnkl}\right|\to 0
\end{eqnarray}
for each set $E$ which is uniformly zero density where 
\begin{eqnarray}\label{eq3.2}
\Delta_{10}a_{mnkl}=a_{mnkl}-a_{mn,k+1,l},~\Delta_{01}a_{mnkl}=a_{mnkl}-a_{mn,k,l+1}
\end{eqnarray}
\end{lem}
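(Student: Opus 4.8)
The plan is to prove the two implications separately, extracting the ``regular'' part from part (b) of the matrix characterization \cite[Theorem 2.2]{ZMM} and encoding the ``almost'' part through sets of uniform zero density. Two remarks will carry most of the bookkeeping. \emph{Remark 1:} since $\mathcal{C}_{bp}\subset[\mathcal{C}_{f_0}]\subset[\mathcal{C}_f]$ and, for $x\in\mathcal{C}_{bp}$, the $[f_2]-$limit of $x$ equals its Pringsheim limit $L$ (once $|x_{kl}-L|<\eta$ for $k,l\geq N$, the block average of $|x_{kl}-L|$ over a $(q+1)\times(q'+1)$ rectangle is at most $\eta+CN\big(\tfrac{1}{q+1}+\tfrac{1}{q'+1}\big)$, uniformly in the position of the rectangle), any $A\in([\mathcal{C}_f];\mathcal{C}_{bp})$ with $bp-\lim Ax=[f_2]-\lim x$ restricts on $\mathcal{C}_{bp}$ to a regular matrix, so by part (b) of \cite[Theorem 2.2]{ZMM} the conditions \eqref{eq1.5}--\eqref{eq1.9} hold with $a_{kl}=0$ and $v=1$; conversely a regular $A$ sends $Le$ to a bounded sequence with $bp-\lim=L$ by \eqref{eq1.7}. \emph{Remark 2:} a bounded double sequence $x$ lies in $[\mathcal{C}_{f_0}]$ if and only if for each $\varepsilon>0$ the set $E_\varepsilon:=\{(k,l):|x_{kl}|\geq\varepsilon\}$ is uniformly of zero density, since $\#(E_\varepsilon\cap D)\leq\varepsilon^{-1}\sum_{(k,l)\in D}|x_{kl}|$ while the block average of $|x_{kl}|$ splits over $E_\varepsilon\cup E_\varepsilon^{c}$ into a term $\leq K\,\#(E_\varepsilon\cap D)/|D|$ and a term $\leq\varepsilon$; in particular every bounded double sequence supported on a set of uniform zero density belongs to $[\mathcal{C}_{f_0}]$.

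\emph{Necessity.} By Remark 1 it remains only to obtain \eqref{eq3.11}, the argument for \eqref{eq3.12} being symmetric. Fix a set $E$ of uniform zero density and, for a sign array $\sigma=(\sigma_{kl})$, set $x_{kl}:=\sigma_{kl}\chi_E(k,l)-\sigma_{k-1,l}\chi_E(k-1,l)$; this $x$ is bounded and supported on $E$ together with its translate $\{(k+1,l):(k,l)\in E\}$, again of uniform zero density, so $x\in[\mathcal{C}_{f_0}]$ by Remark 2. Since $\sum_{k,l}|a_{mnkl}|<\infty$ forces $a_{mnkl}\to0$ as $k\to\infty$, summation by parts in the first index gives $(Ax)_{mn}=\sum_{(k,l)\in E}\sigma_{kl}\,\Delta_{10}a_{mnkl}$. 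If \eqref{eq3.11} fails for $E$, there are $\delta>0$ and indices $(m_\nu,n_\nu)$ with $m_\nu,n_\nu\to\infty$ and $\sum_{(k,l)\in E}|\Delta_{10}a_{m_\nu n_\nu kl}|\geq\delta$; the arrays $(\Delta_{10}a_{m_\nu n_\nu kl})_{(k,l)\in E}$ are bounded in $\ell_1(E)$, have norm $\geq\delta$, and tend to $0$ coordinatewise (because $bp-\lim_{m,n}a_{mnkl}=0$), so a standard disjointification produces, after passing to a subsequence, pairwise disjoint finite blocks $B_\nu\subset E$ with $\sum_{(k,l)\in B_\nu}|\Delta_{10}a_{m_\nu n_\nu kl}|>3\delta/4$. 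Taking $\sigma_{kl}=\mathrm{sgn}\,\Delta_{10}a_{m_\nu n_\nu kl}$ on $B_\nu$ (and arbitrary elsewhere) yields $x\in[\mathcal{C}_{f_0}]$ with $|(Ax)_{m_\nu n_\nu}|\geq\delta/2$ for all $\nu$, contradicting $bp-\lim Ax=0$. Hence \eqref{eq3.11} holds.

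\emph{Sufficiency.} Assume $A$ is regular and \eqref{eq3.11}, \eqref{eq3.12} hold for every set of uniform zero density, and let $x\in[\mathcal{C}_f]$ with $[f_2]-\lim x=L$. By Remark 1, $A(Le)$ is bounded with $bp-\lim=L$ and $x-Le\in[\mathcal{C}_{f_0}]$, so it suffices to show that $A$ maps $[\mathcal{C}_{f_0}]$ into sequences with $bp-\lim=0$; boundedness of $Ax$ then follows from \eqref{eq1.5}. Let $x\in[\mathcal{C}_{f_0}]$, $\|x\|_\infty\leq K$, fix $\varepsilon>0$, put $E=E_\varepsilon$ (of uniform zero density by Remark 2) and write $x=x\chi_{E^c}+x\chi_E$. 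For the first summand, $\|x\chi_{E^c}\|_\infty\leq\varepsilon$ gives $|(A(x\chi_{E^c}))_{mn}|\leq\varepsilon\sup_{m,n}\sum_{k,l}|a_{mnkl}|$ by \eqref{eq1.5}. For $x\chi_E$, one decomposes $E$ --- using its uniform zero density --- into pieces on which a fixed-row or fixed-column estimate applies (in the regular case \eqref{eq1.8} and \eqref{eq1.9} say $\sum_k|a_{mnkl_0}|\to0$ and $\sum_l|a_{mnk_0l}|\to0$) and a remaining piece on which a summation by parts, arranged so that every index set that occurs is again of uniform zero density and the accompanying partial sums of $x$ contribute only bounded factors, reduces the estimate to \eqref{eq3.11} and \eqref{eq3.12}; thus $(A(x\chi_E))_{mn}\to0$ as $m,n\to\infty$. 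Combining, $\limsup_{m,n\to\infty}|(Ax)_{mn}|\leq\varepsilon\sup_{m,n}\sum_{k,l}|a_{mnkl}|$, and since $\varepsilon$ was arbitrary, $bp-\lim Ax=0$.

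\emph{Where the difficulty sits.} Everything outside two steps is either the quoted regular-matrix characterization or a triangle-inequality estimate. The first delicate step is the disjointification in the necessity part, where a single $x\in[\mathcal{C}_{f_0}]$ must be produced whose image escapes $bp-$convergence. The second, and I expect the genuine obstacle, is the sufficiency estimate on $x\chi_E$: a general set of uniform zero density has to be cut up so that the difference operators $\Delta_{10},\Delta_{01}$ only ever act on index sets that remain of uniform zero density while the partial sums of $x$ contribute bounded weights, so that \eqref{eq3.11}--\eqref{eq3.12} together with \eqref{eq1.8}--\eqref{eq1.9} suffice; organising this decomposition is the technical heart of the proof.
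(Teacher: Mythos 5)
First, a point of comparison: the paper does not prove this lemma at all --- it is quoted from Ba\c{s}arir \cite{Basarir} as a known result --- so there is no in-paper argument to measure you against, and your attempt has to stand on its own. On that basis: your Remarks 1 and 2 are correct (apart from the inclusion $\mathcal{C}_{bp}\subset[\mathcal{C}_{f_0}]$, which fails for nonzero constant sequences; you only actually use $\mathcal{C}_{bp}\subset[\mathcal{C}_f]$ with matching limits, which is fine), and the necessity direction is essentially complete: the sequence $x_{kl}=\sigma_{kl}\chi_E(k,l)-\sigma_{k-1,l}\chi_E(k-1,l)$ does lie in $[\mathcal{C}_{f_0}]$, the summation by parts giving $(Ax)_{mn}=\sum_{(k,l)\in E}\sigma_{kl}\Delta_{10}a_{mnkl}$ is legitimate under \eqref{eq1.5}, and the gliding-hump choice of signs is standard.

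The sufficiency direction, however, contains a genuine gap, and you say so yourself. Everything reduces to showing that $(A(x\chi_E))_{mn}\to 0$ for $x$ bounded and $E$ of uniform zero density, and this is precisely the step you do not carry out. The plan you sketch --- a summation by parts ``arranged so that every index set that occurs is again of uniform zero density and the accompanying partial sums of $x$ contribute only bounded factors'' --- runs into a concrete obstruction: the partial sums $\sum_{j\le k,\,(j,l)\in E}x_{jl}$ of a bounded array over a zero-density set are only $o(k)$, not bounded, so the ``bounded factors'' you need are not available, and it is not clear how \eqref{eq3.11}--\eqref{eq3.12} alone recover control of $\sum_{(k,l)\in E}a_{mnkl}x_{kl}$. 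Note that your own necessity technique points to the repair: applying the same gliding-hump argument to $x=\sigma\chi_E$ (no differencing) shows that
\begin{eqnarray*}
\lim_{m,n\to\infty}\sum_{(k,l)\in E}\left|a_{mnkl}\right|=0
\end{eqnarray*}
for every uniformly-zero-density $E$ is \emph{also} necessary, and with this condition in hand sufficiency is a one-line estimate $\bigl|\sum_{(k,l)\in E}a_{mnkl}x_{kl}\bigr|\le \|x\|_{\infty}\sum_{(k,l)\in E}|a_{mnkl}|$. The substantive work you have left undone is therefore exactly the equivalence, under regularity, of that absolute-value condition with the differenced conditions \eqref{eq3.11}--\eqref{eq3.12} quantified over all sets of uniform zero density; until that bridge is built, the ``if'' half of the lemma is not proved.
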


\begin{lem}\cite{ZMM}\label{lem3.5}
	The following statements hold:
	\item[(a)] A four dimensional matrix $A=(a_{mnkl})$ is almost $\mathcal{C}_{bp}-$conservative, i.e., $A\in(\mathcal{C}_{bp}:\mathcal{C}_{f})$ iff the following conditions hold
	\begin{eqnarray}
	\label{eq31.0}&&\sup_{m,n\in\mathbb{N}}\sum_{k,l}|a_{mnkl}|<\infty\\
	\nonumber\\
	\label{eq35.1a}&&\exists a_{ij}\in\mathbb{C} \ni bp-\lim_{q,q'\to\infty}a(i,j,q,q',m,n)=a_{ij},\
	\nonumber\\
	&&\textit{ uniformly in } m,n\in\mathbb{N}\textit{ for each }i,j\in\mathbb{N}\\
	\nonumber\\
	\label{eq35.2a}&&\exists u\in\mathbb{C} \ni bp-\lim_{q,q'\to\infty}\sum_{i,j}a(i,j,q,q',m,n)=u,\
	\nonumber\\
	&&\textit{ uniformly in } m,n\in\mathbb{N}\\
	\nonumber\\
	\label{eq35.3a}&&\exists a_{ij}\in\mathbb{C} \ni bp-\lim_{q,q'\to\infty}\sum_{i}|a(i,j,q,q',m,n)-a_{ij}|=0,\
	\nonumber\\
	&&\textit{ uniformly in } m,n\in\mathbb{N}\textit{ for each }j\in\mathbb{N}\\
	\nonumber\\
	\label{eq35.4a}&&\exists a_{ij}\in\mathbb{C} \ni bp-\lim_{q,q'\to\infty}\sum_{j}|a(i,j,q,q',m,n)-a_{ij}|=0,\
	\nonumber\\
	&&\textit{ uniformly in } m,n\in\mathbb{N}\textit{ for each }i\in\mathbb{N}
	\end{eqnarray}
	where $a(i,j,q,q',m,n)=\sum_{k=m}^{m+q}\sum_{l=n}^{n+q'}a_{klij}/[(q+1)(q'+1)]$. In this case, $a=(a_{ij})\in\mathcal{L}_u$ and
	\begin{eqnarray*}
		f_2-\lim Ax=\sum_{i,j}a_{ij}x_{ij}+\left(u-\sum_{i,j}a_{ij}\right)bp-\lim_{i,j\to\infty}x_{ij},
	\end{eqnarray*}
	that is,
	\begin{eqnarray*}
		&&bp-\lim_{q,q'\to\infty}\sum_{i,j}a(i,j,q,q',m,n)x_{ij}=\sum_{i,j}a_{ij}x_{ij}+\left(u-\sum_{i,j}a_{ij}\right)bp-\lim_{i,j\to\infty}x_{ij},\\
		&&\textit{ uniformly in }m,n\in\mathbb{N}.
	\end{eqnarray*}
	\item[(b)] A four dimensional matrix $A=(a_{mnkl})$ is almost $\mathcal{C}_{bp}-$regular, i.e., $A\in(\mathcal{C}_{bp}:\mathcal{C}_{f})_{reg}$ iff
	the conditions (\ref{eq31.0})-(\ref{eq35.4a}) hold with $a_{ij}=0$ for all $i,j\in\mathbb{N}$ and $u=1$
\end{lem}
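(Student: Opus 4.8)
The plan is to reduce this characterization to the criterion for the class $(\mathcal{C}_{bp}:\mathcal{C}_{bp})$ already recorded in this section (conditions (\ref{eq1.5})--(\ref{eq1.9}) together with the accompanying limit formula) by freezing the two shift parameters $m,n$ and recognising the averaged transform as an ordinary four-dimensional matrix acting between $\mathcal{C}_{bp}$-type spaces.

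First I would rewrite membership in $\mathcal{C}_f$ in operational form: a double sequence $y=(y_{kl})$ lies in $\mathcal{C}_f$ with $f_2$-limit $\ell$ precisely when the averages
\[
\sigma^{(m,n)}_{q,q'}(y)=\frac{1}{(q+1)(q'+1)}\sum_{k=m}^{m+q}\sum_{l=n}^{n+q'}y_{kl}
\]
satisfy $\sigma^{(m,n)}_{q,q'}(y)\to \ell$ as $q,q'\to\infty$, uniformly in $m,n\in\mathbb{N}$. Applying this to $y=Ax$ for $x\in\mathcal{C}_{bp}$ and interchanging the finite averaging sum with the ($bp$-convergent) defining series of $(Ax)_{kl}$, one gets $\sigma^{(m,n)}_{q,q'}(Ax)=\sum_{i,j}a(i,j,q,q',m,n)\,x_{ij}$, where $a(i,j,q,q',m,n)$ is exactly the quantity in the statement. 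Since these averages are bounded once a uniform row-sum bound is available, their Pringsheim convergence coincides with $bp$-convergence, so for each fixed $m,n$ the object $A^{(m,n)}=\bigl(a(i,j,q,q',m,n)\bigr)$ is a four-dimensional matrix that must send every $x\in\mathcal{C}_{bp}$ to a $bp$-convergent double sequence --- that is, $A^{(m,n)}\in(\mathcal{C}_{bp}:\mathcal{C}_{bp})$ --- with the $bp$-limit attained uniformly in $m,n$. Thus $A\in(\mathcal{C}_{bp}:\mathcal{C}_f)$ iff each $A^{(m,n)}$ meets the $(\mathcal{C}_{bp}:\mathcal{C}_{bp})$-criterion and all the limits in that criterion are uniform in $m,n$.

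Next I would translate the five conditions. Condition (\ref{eq1.5}) applied to $A^{(m,n)}$ reads $\sup_{q,q',m,n}\sum_{i,j}|a(i,j,q,q',m,n)|<\infty$; because an average of rows cannot exceed the supremum of their $\ell^1$-norms, this is equivalent to (\ref{eq31.0}), the necessity of (\ref{eq31.0}) from $A\in(\mathcal{C}_{bp}:\mathcal{C}_f)$ being supplied by the usual Banach--Steinhaus argument. Conditions (\ref{eq1.6}), (\ref{eq1.7}), (\ref{eq1.8}), (\ref{eq1.9}) for $A^{(m,n)}$, with ``uniformly in $m,n$'' appended as dictated by the reduction, are literally (\ref{eq35.1a}), (\ref{eq35.2a}), (\ref{eq35.3a}), (\ref{eq35.4a}). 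Finally, feeding $A^{(m,n)}$ into the limit formula of the $(\mathcal{C}_{bp}:\mathcal{C}_{bp})$-lemma gives $bp-\lim_{q,q'}\sum_{i,j}a(i,j,q,q',m,n)x_{ij}=\sum_{i,j}a_{ij}x_{ij}+\bigl(u-\sum_{i,j}a_{ij}\bigr)bp-\lim_{i,j}x_{ij}$ uniformly in $m,n$, and the left side is by definition $f_2-\lim Ax$; this yields the displayed formula of part (a) and, as a by-product, $a=(a_{ij})\in\mathcal{L}_u$. For part (b), ``almost $\mathcal{C}_{bp}$-regular'' means $f_2-\lim Ax=bp-\lim_{k,l}x_{kl}$ for all $x\in\mathcal{C}_{bp}$; testing the limit formula on the single-entry sequences forces $a_{ij}=0$ for all $i,j$, then testing on $e=(1)$ forces $u=1$, while conversely $a_{ij}\equiv 0$ and $u=1$ collapse the formula to regularity.

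The step I expect to be the real obstacle is this first reduction: one must justify not only that $Ax\in\mathcal{C}_f$ is equivalent to ``$A^{(m,n)}\in(\mathcal{C}_{bp}:\mathcal{C}_{bp})$ for every $m,n$'' but that the individual limits occurring inside the $(\mathcal{C}_{bp}:\mathcal{C}_{bp})$-criterion are themselves forced, and forced uniformly in $m,n$, by the single hypothesis that the full transform is almost convergent. This is where one needs the uniform boundedness principle to extract (\ref{eq31.0}) and gliding-hump / block estimates to pin down the coordinatewise and row-wise limits uniformly, together with the elementary but essential fact that the C\'{e}saro-type average $\sigma^{(m,n)}_{q,q'}$ does not enlarge $\ell^1$ row norms; everything after that is a direct transcription of the already available $(\mathcal{C}_{bp}:\mathcal{C}_{bp})$ result.
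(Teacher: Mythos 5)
This lemma is not proved in the paper at all: it is imported verbatim from Zeltser--Mursaleen--Mohiuddine \cite{ZMM}, so there is no in-paper argument to compare yours against. On its own merits, your reduction is essentially the strategy of that source and is sound in outline: the identity $\sigma^{(m,n)}_{q,q'}(Ax)=\sum_{i,j}a(i,j,q,q',m,n)x_{ij}$ (legitimate once the rows are absolutely summable), the observation that $a(i,j,0,0,m,n)=a_{mnij}$ so the averaged row bound and (\ref{eq31.0}) coincide, and the specialisation in part (b) via the unit sequences $e^{(ij)}$ and $e$ are all correct. Two places deserve more than the gloss you give them. First, the necessity of (\ref{eq35.3a}) and (\ref{eq35.4a}) with the limit uniform in $m,n$ cannot be read off by feeding single test sequences into the transform, since these conditions involve $\ell^1$-type sums of $|a(i,j,q,q',m,n)-a_{ij}|$ along a row or column; one genuinely needs the signed sliding-hump construction you allude to, and the uniformity in $m,n$ has to be built into that construction. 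Second, the sufficiency direction is not a ``direct transcription'' of the $(\mathcal{C}_{bp}:\mathcal{C}_{bp})$ result (\ref{eq1.5})--(\ref{eq1.9}): knowing that each frozen matrix $A^{(m,n)}$ is conservative with limits independent of $m,n$ gives pointwise-in-$(m,n)$ convergence of $\sigma^{(m,n)}_{q,q'}(Ax)$ to a common value, but the uniformity in $m,n$ for an arbitrary $x\in\mathcal{C}_{bp}$ must be re-derived by the decomposition $x=(x-\ell e)+\ell e$ with $\ell=bp\text{-}\lim x$, approximating $x-\ell e$ by finitely supported sequences and controlling the tail uniformly via (\ref{eq31.0}); without running that argument the claimed equivalence in your first reduction is asserted rather than proved. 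With those two steps filled in, your proposal reproduces the standard proof of the cited result.
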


Now let define the following sets.

\begin{eqnarray*}
	d_1&=&\left\{a=(a_{kl})\in\Omega:\sup_{m,n\in\mathbb{N}}\sum_{k,l}\left|\sum_{j,i=k,l}^{m,n}\left(\frac{-s}{r}\right)^{j-k}\left(\frac{-u}{t}\right)^{i-l}\frac{a_{ji}}{rt}\right|<\infty\right\},\\
	d_2&=&\biggl\{a=(a_{kl})\in\Omega:\exists\beta_{kl}\in\mathbb{C}\ni, \vartheta-\lim_{m,n\to\infty}\sum_{j,i=k,l}^{m,n}\left(\frac{-s}{r}\right)^{j-k}\left(\frac{-u}{t}\right)^{i-l}a_{ji}=\beta_{kl}\biggr\},
\end{eqnarray*}

\begin{eqnarray*}
	d_3&=&\biggl\{a=(a_{kl})\in\Omega:\exists u\in\mathbb{C}\ni, \vartheta-\lim_{m,n\to\infty}\sum_{k,l}\sum_{j,i=k,l}^{m,n}\left(\frac{-s}{r}\right)^{j-k}\left(\frac{-u}{t}\right)^{i-l}\frac{a_{ji}}{rt}=u\biggr\},\\
	d_4&=&\biggl\{a=(a_{kl})\in\Omega:\exists l_0\in\mathbb{N}\ni,\vartheta-\lim_{m,n\to\infty}\sum_{k}\left|\sum_{j,i=k,l_0}^{m,n}\left(\frac{-s}{r}\right)^{j-k}\left(\frac{-u}{t}\right)^{i-l_0}a_{ji}-\beta_{k,l_0}\right|=0\textit{ for all }k\in\mathbb{N}\biggr\},\\
	d_5&=&\biggl\{a=(a_{kl})\in\Omega:\exists k_0\in\mathbb{N}\ni,\vartheta-\lim_{m,n\to\infty}\sum_{l}\left|\sum_{j,i=k_0,l}^{m,n}\left(\frac{-s}{r}\right)^{j-k_0}\left(\frac{-u}{t}\right)^{i-l}a_{ji}-\beta_{k_0,l}\right|=0\textit{ for all }l\in\mathbb{N}\biggr\},\\
	d_6&=&\biggl\{a=(a_{kl})\in\Omega:\vartheta-\lim_{m,n\to\infty}\sum_{k\in E}\sum_{l\in E}\left|\Delta_{01}\left\{\sum_{j,i=k,l}^{m,n}\left(\frac{-s}{r}\right)^{j-k}\left(\frac{-u}{t}\right)^{i-l}\frac{a_{ji}}{rt}\right\}\right|=0\biggr\},\\
	d_7&=&\biggl\{a=(a_{kl})\in\Omega:\vartheta-\lim_{m,n\to\infty}\sum_{k\in E}\sum_{l\in E}\left|\Delta_{10}\left\{\sum_{j,i=k,l}^{m,n}\left(\frac{-s}{r}\right)^{j-k}\left(\frac{-u}{t}\right)^{i-l}\frac{a_{ji}}{rt}\right\}\right|=0\biggr\}.
\end{eqnarray*}

\begin{thm}\label{thm3.1}
The $\beta(\vartheta)-$dual of the space $B[\mathcal{C}_f]$ is the set $\bigcap_{i=1}^7d_i$
\end{thm}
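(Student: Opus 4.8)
The plan is to translate the $\beta(\vartheta)$-dual of $B[\mathcal{C}_f]$ into a question about a four-dimensional matrix acting on $[\mathcal{C}_f]$, and then apply the characterization of $([\mathcal{C}_f];\mathcal{C}_{bp})$-type matrices recorded in Lemma~\ref{lem3.1} together with the $(\mathcal{C}_{bp};\mathcal{C}_{bp})$-conditions (\ref{eq1.5})--(\ref{eq1.9}). Recall that $a=(a_{kl})\in\{B[\mathcal{C}_f]\}^{\beta(\vartheta)}$ means that the double series $\sum_{k,l}a_{kl}x_{kl}$ is $\vartheta$-convergent for every $x\in B[\mathcal{C}_f]$. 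First I would use the relation (\ref{eq1.4}) between $x$ and $y=B(r,s,t,u)x$: since $x\in B[\mathcal{C}_f]$ iff $y\in[\mathcal{C}_f]$, and
\begin{eqnarray*}
x_{kl}=\frac{1}{rt}\sum_{j,i=0}^{k,l}\left(\frac{-s}{r}\right)^{k-j}\left(\frac{-u}{t}\right)^{l-i}y_{ji},
\end{eqnarray*}
I would substitute this into $\sum_{k,l=0}^{m,n}a_{kl}x_{kl}$ and interchange the order of summation to write the $(m,n)$-th partial sum as $\sum_{j,i=0}^{m,n}c_{mnji}\,y_{ji}$, where
\begin{eqnarray*}
c_{mnji}=\sum_{k=j}^{m}\sum_{l=i}^{n}\left(\frac{-s}{r}\right)^{k-j}\left(\frac{-u}{t}\right)^{l-i}\frac{a_{kl}}{rt}.
\end{eqnarray*}
Thus $a\in\{B[\mathcal{C}_f]\}^{\beta(\vartheta)}$ if and only if the four-dimensional matrix $C=(c_{mnji})$ maps $[\mathcal{C}_f]$ into $\mathcal{C}_{\vartheta}$, i.e. $C\in([\mathcal{C}_f];\mathcal{C}_{bp})$ (taking $\vartheta=bp$).

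Next I would invoke Lemma~\ref{lem3.1}: $C\in([\mathcal{C}_f];\mathcal{C}_{bp})$ holds exactly when $C\in(\mathcal{C}_{bp};\mathcal{C}_{bp})$ and the two difference conditions (\ref{eq3.11})--(\ref{eq3.12}) are satisfied on every set $E$ of uniform zero density. Writing out $C\in(\mathcal{C}_{bp};\mathcal{C}_{bp})$ via (\ref{eq1.5})--(\ref{eq1.9}) and substituting the explicit formula for $c_{mnji}$ gives precisely the defining conditions of the sets $d_1,\dots,d_5$: condition (\ref{eq1.5}) (with $m,n\to\infty$ replaced by a supremum, which is harmless) is $d_1$; the coordinatewise limit (\ref{eq1.6}) with limit $\beta_{kl}$ is $d_2$; the row-sum limit (\ref{eq1.7}) is $d_3$; and the two one-sided tail conditions (\ref{eq1.8})--(\ref{eq1.9}) become $d_4$ and $d_5$. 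Similarly, applying the operators $\Delta_{10}$ and $\Delta_{01}$ from (\ref{eq3.2}) to $c_{mnji}$ and summing over $k,l\in E$ reproduces the defining expressions of $d_7$ and $d_6$, respectively. Hence $a\in\{B[\mathcal{C}_f]\}^{\beta(\vartheta)}$ iff $a\in\bigcap_{i=1}^{7}d_i$, which is the claim.

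I would take care of one technical point along the way: the interchange of summation order used to pass from $\sum a_{kl}x_{kl}$ to $\sum c_{mnji}y_{ji}$ is over the \emph{finite} rectangle $0\le k\le m$, $0\le l\le n$, so it is unconditionally valid and no convergence hypothesis is needed at that stage; the $\vartheta$-convergence enters only when we let $m,n\to\infty$ and demand that $(Cy)_{mn}$ converge, which is exactly what the matrix-class condition encodes. I would also note that since $\mathcal{C}_{bp}\subset[\mathcal{C}_f]\subset\mathcal{M}_u$ and $e\in[\mathcal{C}_f]$, boundedness of the image sequence is automatic, so the target space $\mathcal{C}_{bp}$ (rather than $\mathcal{C}_p$) is the correct one and (\ref{eq1.5}) is genuinely necessary.

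\textbf{Main obstacle.} The routine part is the bookkeeping in rewriting the finite double sum and matching six or seven conditions term by term; the one genuinely delicate point is the application of Lemma~\ref{lem3.1}, which requires $C\in([\mathcal{C}_f];\mathcal{C}_{bp})$ together with $bp\text{-}\lim Cy=[f_2]\text{-}\lim y$, i.e. the \emph{regular} version of the matrix class. One must check that membership of $a$ in the $\beta(bp)$-dual forces not just $Cy\in\mathcal{C}_{bp}$ but the specific limit behavior built into the lemma — or, alternatively, verify that the non-regular conditions of $([\mathcal{C}_f];\mathcal{C}_{bp})$ already reduce to the seven $d_i$'s without the regularity normalization. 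Pinning down exactly which form of the lemma is being used (and confirming that the $d_i$ as written correspond to that form, in particular whether $d_2$ should carry the value $\beta_{kl}$ freely or constrained) is where the proof needs the most attention; everything else is substitution and reindexing.
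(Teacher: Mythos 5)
Your proposal follows essentially the same route as the paper: rewrite the $(m,n)$-th partial sum of $\sum_{k,l}a_{kl}x_{kl}$ via the inverse relation (\ref{eq1.4}) as a four-dimensional matrix (your $C$, the paper's $D$ in (\ref{eqy323})) acting on $y=Bx\in[\mathcal{C}_f]$, reduce membership in the $\beta(\vartheta)$-dual to $D\in([\mathcal{C}_f]:\mathcal{C}_{bp})$, and transcribe the conditions of Lemma~\ref{lem3.1} into the seven sets $d_1,\dots,d_7$. The regularity subtlety you flag (Lemma~\ref{lem3.1} is stated with the limit normalization $bp\text{-}\lim Ax=[f_2]\text{-}\lim x$, whereas the dual only requires convergence) is a fair observation, but the paper handles it exactly as you do, by simply applying the conditions with $d_{mnkl}$ in place of $a_{mnkl}$.
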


\begin{proof}
Suppose that $a=(a_{mn})\in\Omega$ and $x=(x_{mn})\in B(\mathcal{C}_f)$. We need to show that $\left( \sum_{k,l}^{m,n}a_{kl}x_{kl}\right)_{m,n\in\mathbb{N}}\in CS_{bp}$ for these sequences $a=(a_{mn})\in\Omega$ and $x=(x_{mn})\in B[\mathcal{C}_f]$, that is, $y=Bx\in[\mathcal{C}_f]$ where $y$ is the $B-$transform of $x$ (see the equality (\ref{eq1.3}) and (\ref{eq1.4}) ).  $(m,n)$-th partial sum of $\sum_{k,l}a_{kl}x_{kl}$ is the equality $(3.16)$ which was defined by Tu\v{g} \cite[Theorem 3.11, p.14]{OT2} and the matrix $D=(d_{mnkl})$ which was defined as
\begin{eqnarray}\label{eqy323}
d_{mnkl}=\left\{\begin{array}{ccl}
	\sum_{j,i=k,l}^{m,n}\left(\frac{-s}{r}\right)^{j-k}\left(\frac{-u}{t}\right)^{i-l}\frac{a_{ji}}{rt}&, & 0\leq k\leq m, 0\leq l\leq n; \\
	0&, & \textrm{otherwise}
	\end{array}\right.
\end{eqnarray}
for all $k,l,m,n\in\mathbb{N}$. Because of the hypothesis, one can obtain that $ax\in \mathcal{CS}_{bp}$ whenever $x=(x_{mn})\in B[\mathcal{C}_f]$ if and only if $Dy\in \mathcal{C}_{bp}$ whenever $y=(y_{mn})\in[\mathcal{C}_f]$. Thus, we can equally say that $a=(a_{mn})\in\left\{B[\mathcal{C}_f]\right\}^{\beta(\vartheta)} $ if and only if $D\in([\mathcal{C}_f]: \mathcal{C}_{bp})$. Therefore, the conditions of Lemma \ref{lem3.1} holds with $d_{mnkl}$ instead of $a_{mnkl}$, i.e.,
	\begin{eqnarray*}
		&&\sup_{m,n\in\mathbb{N}}\sum_{k,l}\left|\sum_{j,i=k,l}^{m,n}\left(\frac{-s}{r}\right)^{j-k}\left(\frac{-u}{t}\right)^{i-l}\frac{a_{ji}}{rt}\right|<\infty,\\
		&&\exists\beta_{kl}\in\mathbb{C}\ni, bp-\lim_{m,n\to\infty}\sum_{j,i=k,l}^{m,n}\left(\frac{-s}{r}\right)^{j-k}\left(\frac{-u}{t}\right)^{i-l}a_{ji}=\beta_{kl},\\
		&&\exists u\in\mathbb{C}\ni, bp-\lim_{m,n\to\infty}\sum_{k,l}\sum_{j,i=k,l}^{m,n}\left(\frac{-s}{r}\right)^{j-k}\left(\frac{-u}{t}\right)^{i-l}\frac{a_{ji}}{rt}=u,\\
		&&\exists l_0\in\mathbb{N}\ni, bp-\lim_{m,n\to\infty}\sum_{k}\left|\sum_{j,i=k,l_0}^{m,n}\left(\frac{-s}{r}\right)^{j-k}\left(\frac{-u}{t}\right)^{i-l_0}a_{ji}-\beta_{k,l_0}\right|=0,\\
		&&\textit{ for all }k\in\mathbb{N},\\
		&&\exists k_0\in\mathbb{N}\ni, bp-\lim_{m,n\to\infty}\sum_{l}\left|\sum_{j,i=k_0,l}^{m,n}\left(\frac{-s}{r}\right)^{j-k_0}\left(\frac{-u}{t}\right)^{i-l}a_{ji}-\beta_{k_0,l}\right|=0,\\
		&&\textit{ for all }l\in\mathbb{N},\\
		&&bp-\lim_{m,n\to\infty}\sum_{k\in E}\sum_{l\in E}\left|\Delta_{01}\left\{\sum_{j,i=k,l}^{m,n}\left(\frac{-s}{r}\right)^{j-k}\left(\frac{-u}{t}\right)^{i-l}\frac{a_{ji}}{rt}\right\}\right|=0,\\
		&&bp-\lim_{m,n\to\infty}\sum_{k\in E}\sum_{l\in E}\left|\Delta_{10}\left\{\sum_{j,i=k,l}^{m,n}\left(\frac{-s}{r}\right)^{j-k}\left(\frac{-u}{t}\right)^{i-l}\frac{a_{ji}}{rt}\right\}\right|=0.
\end{eqnarray*}
which is the set $\bigcap_{i=1}^{7}d_i$ as we assumed. 
\end{proof}

\begin{lem}\label{lemm3.1}\cite[Theorem 4.10, p.14]{Orhan}
A four dimensional matrix $A=(a_{mnkl})\in(\mathcal{C}_f:\mathcal{M}_u)$  if and only if
\begin{eqnarray}
\label{eq2.3}&&A_{mn}\in\mathcal{C}_f^{\beta(\vartheta)} ~for ~all~ m,n\in\mathbb{N},\\
\label{eq2.4}&&\sup_{m,n\in\mathbb{N}}\sum_{k,l}|a_{mnkl}|<\infty.
\end{eqnarray}
\end{lem}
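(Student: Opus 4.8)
The plan is to characterize $A=(a_{mnkl})\in(\mathcal{C}_f:\mathcal{M}_u)$ by splitting the assertion into its two natural halves: necessity of the two stated conditions, and their sufficiency. For \emph{necessity}, suppose $A\in(\mathcal{C}_f:\mathcal{M}_u)$. Then for every $x=(x_{kl})\in\mathcal{C}_f$ the $A$-transform $Ax=\bigl((Ax)_{mn}\bigr)$ exists and lies in $\mathcal{M}_u$; existence of $(Ax)_{mn}=\vartheta\text{-}\sum_{k,l}a_{mnkl}x_{kl}$ for each fixed $(m,n)$ is precisely the statement $A_{mn}\in\mathcal{C}_f^{\beta(\vartheta)}$, giving (\ref{eq2.3}) immediately. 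For (\ref{eq2.4}) I would argue by contradiction using the uniform boundedness principle: each functional $x\mapsto(Ax)_{mn}$ is bounded on the Banach space $\mathcal{C}_f$ (with norm $\|\cdot\|_{\mathcal{C}_f}$), and since $Ax\in\mathcal{M}_u$ the family $\{x\mapsto(Ax)_{mn}\}_{m,n}$ is pointwise bounded on $\mathcal{C}_f$; Banach--Steinhaus then gives a uniform bound on the operator norms, and one shows that the operator norm of $x\mapsto(Ax)_{mn}$ is comparable to (in fact equals, up to the standard constant from the structure of $\mathcal{C}_f^{\beta(\vartheta)}$) $\sum_{k,l}|a_{mnkl}|$, forcing $\sup_{m,n}\sum_{k,l}|a_{mnkl}|<\infty$.

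For \emph{sufficiency}, assume (\ref{eq2.3}) and (\ref{eq2.4}). Condition (\ref{eq2.3}) guarantees that $(Ax)_{mn}$ is well defined for every $x\in\mathcal{C}_f$ and each $(m,n)$. It then remains to bound $\sup_{m,n}|(Ax)_{mn}|$. Since $\mathcal{C}_f\subset\mathcal{M}_u$ with $\|x\|_\infty\le\|x\|_{\mathcal{C}_f}$ (this follows from the inclusions recorded in Section~2 together with the explicit norms), any $x\in\mathcal{C}_f$ is bounded, say $\|x\|_\infty\le C$. Hence
\begin{eqnarray*}
|(Ax)_{mn}|=\Bigl|\sum_{k,l}a_{mnkl}x_{kl}\Bigr|\le\sum_{k,l}|a_{mnkl}|\,|x_{kl}|\le C\sup_{m,n\in\mathbb{N}}\sum_{k,l}|a_{mnkl}|<\infty,
\end{eqnarray*}
uniformly in $m,n$, so $Ax\in\mathcal{M}_u$ and therefore $A\in(\mathcal{C}_f:\mathcal{M}_u)$. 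One subtlety worth a line is that the series defining $(Ax)_{mn}$ converges in the $\vartheta$-sense while the estimate above manipulates it as an absolutely convergent series; this is legitimate because (\ref{eq2.3}) combined with (\ref{eq2.4}) (or just the $\ell_1$-type bound on the rows, which is what the $\beta(\vartheta)$-dual condition amounts to here) forces $A_{mn}\in\mathcal{L}_u$-summable against bounded sequences, so the two notions of sum agree.

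The main obstacle is the necessity of (\ref{eq2.4}): one must justify carefully that the row sum $\sum_{k,l}|a_{mnkl}|$ is exactly the norm of the functional $x\mapsto(Ax)_{mn}$ on $\mathcal{C}_f$, or at least bounded below by a fixed multiple of it. The clean way is to exhibit, for each $(m,n)$ and each finite $F\subset\mathbb{N}\times\mathbb{N}$, a test sequence $x\in\mathcal{C}_f$ with $\|x\|_{\mathcal{C}_f}\le1$ and $(Ax)_{mn}\ge\sum_{(k,l)\in F}|a_{mnkl}|-\varepsilon$ — for instance a suitably sign-adjusted finitely supported sequence, which trivially lies in $\mathcal{C}_{bp}\subset\mathcal{C}_f$ with $\mathcal{C}_f$-norm equal to its sup norm $1$ — and then let $F$ exhaust $\mathbb{N}\times\mathbb{N}$. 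Granting this, Banach--Steinhaus closes the argument; everything else is routine and parallels the proof of Lemma~\ref{lemm3.1} for $(\mathcal{C}_f:\mathcal{M}_u)$ recorded above, with $\mathcal{M}_u$ in the target slot making the characterization especially short since no limit-type conditions on the $a_{mnkl}$ are needed.
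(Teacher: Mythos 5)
Your argument is correct, but note that the paper itself offers no proof of this lemma at all: it is imported verbatim from \cite[Theorem 4.10]{Orhan} and used as a black box, so there is no in-paper proof to compare against. Your self-contained argument is the standard one and it works. Sufficiency is exactly as you say: since $\|x\|_{\infty}\leq\|x\|_{\mathcal{C}_f}$ (take $q=q'=0$ in the norm), condition (\ref{eq2.4}) gives absolute convergence of each row series against any $x\in\mathcal{C}_f\subset\mathcal{M}_u$ and the uniform bound $|(Ax)_{mn}|\leq\|x\|_{\infty}\sup_{m,n}\sum_{k,l}|a_{mnkl}|$; as you observe, this even makes (\ref{eq2.3}) formally redundant given (\ref{eq2.4}). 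For necessity, (\ref{eq2.3}) is definitional, and your Banach--Steinhaus argument for (\ref{eq2.4}) is sound: the finitely supported, sign-adjusted test sequences lie in $\mathcal{C}_{bp}\subset\mathcal{C}_f$ with $\mathcal{C}_f$-norm exactly $1$ (each rectangular average of entries of modulus at most $1$ has modulus at most $1$), so $\|f_{mn}\|\geq\sum_{(k,l)\in F}|a_{mnkl}|$ for every finite $F$, and exhausting $F$ closes the estimate without presupposing summability of the rows. The one step you assert rather than prove is that each functional $f_{mn}(x)=(Ax)_{mn}$ is continuous on $\mathcal{C}_f$; this does need a line, and it is where the choice $\vartheta=bp$ matters: $f_{mn}$ is the $bp$-limit of the rectangular partial-sum functionals $s_{MN}(x)=\sum_{k,l=0}^{M,N}a_{mnkl}x_{kl}$, each of which is continuous because the coordinates are ($|x_{kl}|\leq\|x\|_{\mathcal{C}_f}$), and $bp$-convergence makes $(s_{MN}(x))_{M,N}$ pointwise bounded, so a first application of the uniform boundedness principle gives continuity of the limit functional before the second application to the family $\{f_{mn}\}$. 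With that line added, your proof is complete and arguably more transparent than relying on the external citation.
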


The following corollary is the direct consequence of the above Lemma \ref{lemm3.1}, since $([\mathcal{C}_f]:\mathcal{M}_u)\subset(\mathcal{C}_f:\mathcal{M}_u)$ and since $\left\{[\mathcal{C}_f]\right\}^{\beta(\vartheta)}\subset\left\{\mathcal{C}_f\right\}^{\beta(\vartheta)}$ holds.
\begin{cor}\label{cor3.1}
A four dimensional matrix $A=(a_{mnkl})\in([\mathcal{C}_f]:\mathcal{M}_u)$  if and only if the $A_{mn}\in\left\{[\mathcal{C}_f]\right\}^{\beta(\vartheta)}$ for all $m,n\in\mathbb{N}$ and (\ref{eq2.4}) hold.
\end{cor}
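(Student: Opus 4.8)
The plan is to derive Corollary~\ref{cor3.1} directly from Lemma~\ref{lemm3.1} by exploiting the inclusion $[\mathcal{C}_f]\subset\mathcal{C}_f$ together with its dual counterpart $\left\{\mathcal{C}_f\right\}^{\beta(\vartheta)}\subset\left\{[\mathcal{C}_f]\right\}^{\beta(\vartheta)}$, rather than redoing the characterization from scratch. Since $[\mathcal{C}_f]$ is a closed subspace of $\mathcal{C}_f$ (both being Banach spaces under their respective norms, with the larger norm dominating), any matrix which maps $\mathcal{C}_f$ into $\mathcal{M}_u$ certainly maps the smaller space $[\mathcal{C}_f]$ into $\mathcal{M}_u$; this gives the trivial inclusion $(\mathcal{C}_f:\mathcal{M}_u)\subset([\mathcal{C}_f]:\mathcal{M}_u)$. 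The content of the corollary is the reverse: one must show that the only obstruction is making the rows act on $[\mathcal{C}_f]$ and keeping the uniform $\ell_1$-row bound.

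First I would establish necessity. Suppose $A=(a_{mnkl})\in([\mathcal{C}_f]:\mathcal{M}_u)$. Then for each fixed $(m,n)$ the double series $\sum_{k,l}a_{mnkl}x_{kl}$ must converge in the $bp$-sense for every $x\in[\mathcal{C}_f]$, which is precisely the statement that $A_{mn}\in\left\{[\mathcal{C}_f]\right\}^{\beta(\vartheta)}$; this is just the definition of a four-dimensional matrix acting on a sequence space as recalled after equation~(\ref{eq1.1}). For the uniform bound (\ref{eq2.4}), I would invoke a uniform boundedness / closed graph argument: the maps $x\mapsto (Ax)_{mn}$ are continuous linear functionals on the Banach space $[\mathcal{C}_f]$, the map $x\mapsto Ax$ into $\mathcal{M}_u$ is a well-defined linear operator whose graph is closed (pointwise limits), hence bounded, and then the supremum of the $\ell_1$-norms of the rows — which equals the norm of this operator up to constants, because $[\mathcal{C}_f]\supset\mathcal{C}_{bp}$ contains the unit ball's extreme test sequences $e^{(k,l)}$ and their finite sign-combinations — is finite. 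Alternatively, and more cheaply, I would simply note that since $[\mathcal{C}_f]$ contains all finitely supported sequences and contains $e=(1,1,\dots)$ and all sequences with finitely many sign changes, the standard scalar test-sequence argument forces $\sup_{m,n}\sum_{k,l}|a_{mnkl}|<\infty$ exactly as in the proof of Lemma~\ref{lemm3.1}.

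For sufficiency, suppose $A_{mn}\in\left\{[\mathcal{C}_f]\right\}^{\beta(\vartheta)}$ for all $m,n$ and that (\ref{eq2.4}) holds. Then $(Ax)_{mn}=\vartheta\text{-}\sum_{k,l}a_{mnkl}x_{kl}$ is well-defined for every $x\in[\mathcal{C}_f]$, and for such $x$ we have $\|x\|_{\mathcal{C}_f}\leq\|x\|_{[\mathcal{C}_f]}<\infty$, in particular $x$ is bounded. Hence
\begin{eqnarray*}
\sup_{m,n\in\mathbb{N}}|(Ax)_{mn}|\leq\sup_{m,n\in\mathbb{N}}\sum_{k,l}|a_{mnkl}|\,|x_{kl}|\leq\|x\|_{\infty}\sup_{m,n\in\mathbb{N}}\sum_{k,l}|a_{mnkl}|<\infty,
\end{eqnarray*}
so $Ax\in\mathcal{M}_u$ and $A\in([\mathcal{C}_f]:\mathcal{M}_u)$. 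This is the same estimate that underlies Lemma~\ref{lemm3.1}, only with the domain restricted; the inclusions $([\mathcal{C}_f]:\mathcal{M}_u)\subset(\mathcal{C}_f:\mathcal{M}_u)$ and $\left\{[\mathcal{C}_f]\right\}^{\beta(\vartheta)}\subset\left\{\mathcal{C}_f\right\}^{\beta(\vartheta)}$ noted before the statement are exactly what guarantee that replacing $\mathcal{C}_f^{\beta(\vartheta)}$ by $\left\{[\mathcal{C}_f]\right\}^{\beta(\vartheta)}$ in (\ref{eq2.3}) yields the correct — and genuinely weaker — admissibility condition.

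The only genuine subtlety, and the step I expect to require the most care, is the necessity of (\ref{eq2.4}): one cannot literally quote Lemma~\ref{lemm3.1} here, because a matrix in $([\mathcal{C}_f]:\mathcal{M}_u)$ need not lie in $(\mathcal{C}_f:\mathcal{M}_u)$ (its rows may fail to be in $\mathcal{C}_f^{\beta(\vartheta)}$). So the uniform row bound has to be re-derived intrinsically, using only test sequences that live in $[\mathcal{C}_f]$. Fortunately $\mathcal{C}_{bp}\subset[\mathcal{C}_f]$ and $[\mathcal{C}_f]$ contains every double sequence of the form $(\varepsilon_k\delta_l)$ with $\varepsilon,\delta\in\{-1,+1\}$ finitely supported, which is a rich enough supply for the classical Schur-type / gliding-hump argument to go through verbatim, so this obstacle is real but routine.
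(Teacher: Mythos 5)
Your proposal is correct, and it is in substance \emph{more} than what the paper offers: the paper disposes of Corollary~\ref{cor3.1} in a single sentence, calling it a ``direct consequence'' of Lemma~\ref{lemm3.1} on the strength of the inclusions $([\mathcal{C}_f]:\mathcal{M}_u)\subset(\mathcal{C}_f:\mathcal{M}_u)$ and $\left\{[\mathcal{C}_f]\right\}^{\beta(\vartheta)}\subset\left\{\mathcal{C}_f\right\}^{\beta(\vartheta)}$. As you correctly observe, those inclusions run the wrong way for the intended deduction: since $[\mathcal{C}_f]\subset\mathcal{C}_f$, the elementary inclusions are $(\mathcal{C}_f:\mathcal{M}_u)\subset([\mathcal{C}_f]:\mathcal{M}_u)$ and $\left\{\mathcal{C}_f\right\}^{\beta(\vartheta)}\subset\left\{[\mathcal{C}_f]\right\}^{\beta(\vartheta)}$, and a matrix in $([\mathcal{C}_f]:\mathcal{M}_u)$ need not belong to $(\mathcal{C}_f:\mathcal{M}_u)$ because its rows may lie in $\left\{[\mathcal{C}_f]\right\}^{\beta(\vartheta)}$ without lying in $\left\{\mathcal{C}_f\right\}^{\beta(\vartheta)}$; so Lemma~\ref{lemm3.1} cannot simply be quoted. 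Your route supplies exactly the two missing pieces: an intrinsic proof of the necessity of (\ref{eq2.4}) using only test sequences that live in $[\mathcal{C}_f]$ (finitely supported sign patterns, which belong to $[\mathcal{C}_{f_0}]$, together with a Banach--Steinhaus or row-by-row extraction of $\sum_{k,l}|a_{mnkl}|$ on the Banach space $([\mathcal{C}_f],\|\cdot\|_{[\mathcal{C}_f]})$), and the cheap sufficiency estimate $\sup_{m,n}|(Ax)_{mn}|\leq\|x\|_{\infty}\sup_{m,n}\sum_{k,l}|a_{mnkl}|$, which works because $[\mathcal{C}_f]\subset\mathcal{M}_u$. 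What your argument buys is a characterization that does not presuppose $A\in(\mathcal{C}_f:\mathcal{M}_u)$, i.e.\ it actually proves the statement with the weaker row condition $A_{mn}\in\left\{[\mathcal{C}_f]\right\}^{\beta(\vartheta)}$; what the paper's shortcut would buy, were its inclusions usable, is only brevity. The one step I would tighten is the closed-graph variant of your necessity argument: before invoking it you should justify that each $A_{mn}\in\left\{[\mathcal{C}_f]\right\}^{\beta(\vartheta)}$ induces a \emph{continuous} functional on $[\mathcal{C}_f]$ (itself a uniform-boundedness argument on the partial-sum functionals); the test-sequence alternative you already mention avoids this and is the cleaner way to obtain $\sup_{m,n}\sum_{k,l}|a_{mnkl}|<\infty$.
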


\begin{thm}
The $\gamma-$dual of the space $\left\{B[\mathcal{C}_f]\right\}^{\gamma}=d_1\cap CS_{\vartheta}$.
\end{thm}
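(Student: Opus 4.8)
The plan is to mirror the structure of the proof of Theorem~\ref{thm3.1} and reduce the computation of $\left\{B[\mathcal{C}_f]\right\}^{\gamma}$ to a known four-dimensional matrix class, this time the class $([\mathcal{C}_f]:\mathcal{BS})$ rather than $([\mathcal{C}_f]:\mathcal{C}_{bp})$. First I would recall that $\lambda^{\gamma}=D_2(\lambda,\mathcal{BS})$, so $a=(a_{mn})\in\left\{B[\mathcal{C}_f]\right\}^{\gamma}$ exactly when the partial sums $s_{mn}=\sum_{k,l=0}^{m,n}a_{kl}x_{kl}$ are bounded for every $x\in B[\mathcal{C}_f]$. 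Using the relation (\ref{eq1.4}) between $x$ and its $B(r,s,t,u)$-transform $y=Bx$, together with the same rearrangement that produced the matrix $D=(d_{mnkl})$ in (\ref{eqy323}) (the change-of-order-of-summation identity of Tu\v{g} \cite[Theorem 3.11]{OT2}), one gets $s_{mn}=(Dy)_{mn}$. Since $x\mapsto y$ is a bijection from $B[\mathcal{C}_f]$ onto $[\mathcal{C}_f]$ by Theorem~\ref{thm2.0}, the condition ``$ax\in\mathcal{BS}$ for all $x\in B[\mathcal{C}_f]$'' is equivalent to ``$Dy\in\mathcal{M}_u$ for all $y\in[\mathcal{C}_f]$'', i.e.\ to $D\in([\mathcal{C}_f]:\mathcal{M}_u)$.

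Next I would invoke Corollary~\ref{cor3.1}: $D\in([\mathcal{C}_f]:\mathcal{M}_u)$ holds if and only if $D_{mn}\in\left\{[\mathcal{C}_f]\right\}^{\beta(\vartheta)}$ for every $m,n\in\mathbb{N}$ and $\sup_{m,n}\sum_{k,l}|d_{mnkl}|<\infty$. The supremum condition, written out with $d_{mnkl}$ as in (\ref{eqy323}), is precisely membership in the set $d_1$. The remaining requirement, that each row $D_{mn}=(d_{mnkl})_{k,l}$ lie in $\left\{[\mathcal{C}_f]\right\}^{\beta(\vartheta)}$, is exactly the statement that the relevant $\vartheta$-series converge, which is what $CS_{\vartheta}$ encodes on the sequence-space side; translating this back through the definitions of $\beta(\vartheta)$-dual and $CS_\vartheta$ gives the factor $CS_{\vartheta}$ in the claimed answer. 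So the two conditions of Corollary~\ref{cor3.1} assemble into $d_1\cap CS_{\vartheta}$, and one concludes $\left\{B[\mathcal{C}_f]\right\}^{\gamma}=d_1\cap CS_{\vartheta}$.

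I expect the main obstacle to be the bookkeeping in the reduction step: verifying carefully that the rearrangement turning $\sum_{k,l=0}^{m,n}a_{kl}x_{kl}$ into $(Dy)_{mn}$ is legitimate (i.e.\ that the finite double sum can be reindexed via the inverse matrix $F$ so that the coefficient of $y_{kl}$ is exactly $d_{mnkl}$), and pinning down precisely how ``$D_{mn}\in\left\{[\mathcal{C}_f]\right\}^{\beta(\vartheta)}$ for all $m,n$'' corresponds to the condition $CS_{\vartheta}$ on $a$ itself. Since the finite-sum rearrangement involves no convergence issues and is the same one already used (and cited to \cite[Theorem 3.11]{OT2}) in the proof of Theorem~\ref{thm3.1}, I would state it briefly and refer back there, then quote Corollary~\ref{cor3.1} to close the argument. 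The rest is routine matching of the two conditions against the definitions of $d_1$ and $CS_{\vartheta}$.
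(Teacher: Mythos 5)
Your proposal is correct and takes essentially the same route as the paper: reduce membership in $\left\{B[\mathcal{C}_f]\right\}^{\gamma}$ to the statement $D\in([\mathcal{C}_f]:\mathcal{M}_u)$ via the matrix $D=(d_{mnkl})$ of (\ref{eqy323}), then apply Corollary \ref{cor3.1} and read off the two conditions as $d_1$ and $CS_{\vartheta}$. The only difference is cosmetic: you spell out the reindexing and the isomorphism $x\mapsto y$ slightly more explicitly than the paper does, which simply defers to the argument of Theorem \ref{thm3.1}.
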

\begin{proof}
To prove this theorem we need to show that $\left( \sum_{k,l}^{m,n}a_{kl}x_{kl}\right)_{m,n\in\mathbb{N}}\in BS$ by supposing $a=(a_{mn})\in\Omega$ and $x=(x_{mn})\in B[\mathcal{C}_f]$ where $y=Bx\in[\mathcal{C}_f]$. If we follow the similar way with the Theorem \ref{thm3.1}, we can say that $ax\in\mathcal{BS}$ whenever $x=(x_{mn})\in B[\mathcal{C}_f]$ if and only if $Dy\in\mathcal{M}_u$ whenever $y=(y_{mn})\in[\mathcal{C}_f]$, where the matrix $D=(d_{mnkl})$ which was defined in the Theorem \ref{thm3.1} as (\ref{eqy323}). Consequently, we can say that $a=(a_{mn})\in\left\{B[\mathcal{C}_f]\right\}^{\gamma}$ if and only if $D\in([\mathcal{C}_f]:\mathcal{M}_u)$. Hence, the conditions of Corollary \ref{cor3.1} holds with the matrix $D=(d_{mnkl})$ instead of the matrix $A=(a_{mnkl})$. Therefore, the The $\gamma-$dual of the space $\left\{B[\mathcal{C}_f]\right\}^{\gamma}$ is the set $d_1\cup CS_{\vartheta}$ which completes the proof.
\end{proof}

\section{Matrix Transformations related to the Sequence Space $B[\mathcal{C}_{f}]$}

Characterization of four-dimensional matrices has an importance in four-dimensional matrix transformations. Some significant classes have been characterized by several mathematicians (see \cite{ZM,Mursaleen,MYFB22,Basarir,ZMM,MYFB}). In this present section, to fill a gap in the concerned literature, we characterize some new four-dimensional matrix classes $([\mathcal{C}_f];\mathcal{C}_{f})$, $(B[\mathcal{C}_f];\mathcal{C}_{f})$ and $(B[\mathcal{C}_{f}]:\mathcal{M}_{u})$ after stating some needed Lemmas. Then, we complete this section with some significant results.

\begin{lem}\cite{MES}\label{lem3.3}
	A four dimensional matrix $A=(a_{mnkl})$ is almost regular, i.e., $A\in(\mathcal{C}_{bp}:\mathcal{C}_{f})_{reg}$ iff the condition (\ref{eq31.0}) and the following conditions hold
	\begin{eqnarray}
	\nonumber\\
	\label{eq33.1}&&\lim_{q,q'\to\infty}a(i,j,q,q',m,n)=0,\
	\nonumber\\
	&&\textit{ uniformly in } m,n\in\mathbb{N}\textit{ for each }i,j\in\mathbb{N},\\
	\nonumber\\
	\label{eq33.2}&&\lim_{q,q'\to\infty}\sum_{i,j}a(i,j,q,q',m,n)=1,\
	\nonumber\\
	&&\textit{ uniformly in } m,n\in\mathbb{N},\\
	\nonumber\\
	\label{eq33.3}&&\lim_{q,q'\to\infty}\sum_{i}\left|a(i,j,q,q',m,n)\right|=0,\
	\nonumber\\
	&&\textit{ uniformly in } m,n\in\mathbb{N}\textit{ for each }j\in\mathbb{N},\\
	\nonumber\\
	\label{eq33.4}&&\lim_{q,q'\to\infty}\sum_{j}\left|a(i,j,q,q',m,n)\right|=0,\
	\nonumber\\
	&&\textit{ uniformly in } m,n\in\mathbb{N}\textit{ for each }i\in\mathbb{N},
	\end{eqnarray}
\end{lem}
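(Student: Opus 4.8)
The plan is to obtain Lemma~\ref{lem3.3} as an immediate specialization of Lemma~\ref{lem3.5}(b). By definition, $A=(a_{mnkl})$ is almost $\mathcal{C}_{bp}$-regular exactly when $A\in(\mathcal{C}_{bp}:\mathcal{C}_{f})$ and $f_2-\lim Ax=bp-\lim_{i,j\to\infty}x_{ij}$ for every $x\in\mathcal{C}_{bp}$, and Lemma~\ref{lem3.5}(b) already asserts that this is equivalent to conditions (\ref{eq31.0})--(\ref{eq35.4a}) holding with the particular values $a_{ij}=0$ for all $i,j\in\mathbb{N}$ and $u=1$. So the whole task is to carry out that substitution and recognise the outcome as the list (\ref{eq31.0}), (\ref{eq33.1})--(\ref{eq33.4}).

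Next I would perform the substitution. With $a_{ij}=0$, condition (\ref{eq35.1a}) becomes $bp-\lim_{q,q'\to\infty}a(i,j,q,q',m,n)=0$ uniformly in $m,n$ for each $i,j$; with $u=1$, condition (\ref{eq35.2a}) becomes $bp-\lim_{q,q'\to\infty}\sum_{i,j}a(i,j,q,q',m,n)=1$ uniformly in $m,n$; and with $a_{ij}=0$, conditions (\ref{eq35.3a}) and (\ref{eq35.4a}) become $bp-\lim_{q,q'\to\infty}\sum_{i}|a(i,j,q,q',m,n)|=0$ and $bp-\lim_{q,q'\to\infty}\sum_{j}|a(i,j,q,q',m,n)|=0$, both uniformly in $m,n$, while (\ref{eq31.0}) is untouched. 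To finish the identification I would observe that, once (\ref{eq31.0}) holds, each of the auxiliary double sequences $(a(i,j,q,q',m,n))_{q,q'}$, $(\sum_{i,j}a(i,j,q,q',m,n))_{q,q'}$, $(\sum_{i}|a(i,j,q,q',m,n)|)_{q,q'}$ and $(\sum_{j}|a(i,j,q,q',m,n)|)_{q,q'}$ is bounded, being dominated by $\sup_{k,l}\sum_{i,j}|a_{klij}|<\infty$; hence for them $bp$-convergence and plain Pringsheim convergence coincide, so $bp-\lim$ may be replaced by $\lim$. The four displayed statements are then precisely (\ref{eq33.1})--(\ref{eq33.4}), and both directions follow at once.

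For completeness one could instead give the self-contained argument, bypassing Lemma~\ref{lem3.5}: necessity of (\ref{eq31.0}) comes from the standard fact that every matrix in $(\mathcal{C}_{bp}:\mathcal{M}_{u})$ satisfies it; necessity of (\ref{eq33.1}) and (\ref{eq33.2}) comes from applying $A$ to the coordinate sequences $e^{(ij)}$ and to $e$, whose $bp$-limits are $0$ and $1$; and (\ref{eq33.3}), (\ref{eq33.4}) come from feeding $A$ gliding-hump sequences supported on a single row, respectively a single column. Sufficiency is the classical Silverman--Toeplitz-type estimate: write $x=\ell e+(x-\ell e)$ with $\ell=bp-\lim x$ and $x-\ell e\in\mathcal{C}_{bp0}$, then split $\sum_{i,j}a(i,j,q,q',m,n)(x-\ell e)_{ij}$ into a finite block controlled by (\ref{eq33.1}), (\ref{eq33.3}), (\ref{eq33.4}) and a tail controlled by (\ref{eq31.0}). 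In either presentation there is no serious obstacle; if forced to name one it would be the construction of the single-row and single-column test sequences isolating conditions (\ref{eq33.3}) and (\ref{eq33.4}), these being the conditions peculiar to the four-dimensional $\mathcal{C}_{f}$ setting that have no one-dimensional counterpart. Otherwise the lemma is essentially a transcription of Lemma~\ref{lem3.5}(b), and the only bookkeeping point is the harmless passage from $bp-\lim$ to $\lim$ justified above.
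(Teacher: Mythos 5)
Your derivation is correct, but it is worth noting that the paper does not prove this statement at all: Lemma~\ref{lem3.3} is quoted verbatim from Mursaleen and Sava\c{s} \cite{MES} as an external result, so there is no internal proof to match against. Your route --- specializing Lemma~\ref{lem3.5}(b) by setting $a_{ij}=0$ and $u=1$ in conditions (\ref{eq35.1a})--(\ref{eq35.4a}) and checking that the outcome is exactly (\ref{eq31.0}) together with (\ref{eq33.1})--(\ref{eq33.4}) --- is a legitimate and complete argument \emph{within the paper's own framework}, since both lemmas characterize the same class $(\mathcal{C}_{bp}:\mathcal{C}_f)_{reg}$ under the paper's conventions. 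Your handling of the only real bookkeeping issue, the replacement of $bp\text{-}\lim$ by $\lim$, is also sound: under (\ref{eq31.0}) each of the auxiliary sequences in $(q,q')$ is bounded by $\sup_{k,l}\sum_{i,j}|a_{klij}|$, so boundedness is automatic and the two limit notions coincide. What your approach buys is self-containment relative to \cite{ZMM} rather than \cite{MES}; what it costs is that it is not historically how the result was obtained (Mursaleen--Sava\c{s} 2003 predates Zeltser--Mursaleen--Mohiuddine 2009, so the original proof is the direct Silverman--Toeplitz-type argument you sketch in your final paragraph). That second, self-contained sketch is plausible but only an outline --- in particular the gliding-hump constructions forcing (\ref{eq33.3}) and (\ref{eq33.4}) are asserted rather than carried out --- so the specialization of Lemma~\ref{lem3.5}(b) should be regarded as your actual proof, and it suffices.
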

where $a(i,j,q,q',m,n)$ is defined as in Lemma \ref{lem3.5}.
\begin{lem}\cite{MM}\label{lem3.4}
	A four dimensional matrix $A=(a_{mnkl})$ is almost strongly regular, i.e., $A\in(\mathcal{C}_{f}:\mathcal{C}_{f})_{reg}$ iff $A$ is almost regular and the following two conditions hold
	\begin{eqnarray}
	\label{eq34.1}&&\lim_{q,q'\to\infty}\sum_{i}\sum_{j}\left|\Delta_{10}a(i,j,q,q',m,n)\right|=0\textit{ uniformly in } m,n\in\mathbb{N},\\
	\label{eq34.2}&&\lim_{q,q'\to\infty}\sum_{j}\sum_{i}\left|\Delta_{01}a(i,j,q,q',m,n)\right|=0\textit{ uniformly in } m,n\in\mathbb{N},
	\end{eqnarray}
	where
	\begin{eqnarray*}
		&&\Delta_{10}a(i,j,q,q',m,n)=a(i,j,q,q',m,n)-a(i+1,j,q,q',m,n),\\
		&&\Delta_{01}a(i,j,q,q',m,n)=a(i,j,q,q',m,n)-a(i,j+1,q,q',m,n).
	\end{eqnarray*}
\end{lem}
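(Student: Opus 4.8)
The plan is to recast membership in $(\mathcal{C}_{f}:\mathcal{C}_{f})_{reg}$ in terms of the row‑averaged matrices $\bigl(a(i,j,q,q',m,n)\bigr)_{i,j}$. In either direction of the asserted equivalence, condition (\ref{eq31.0}) is available: it is part of ``$A$ almost regular'', and conversely any $A\in(\mathcal{C}_f:\mathcal{C}_f)$ maps $\mathcal{C}_{bp}\subset\mathcal{C}_f$ into $\mathcal{C}_f\subset\mathcal{M}_u$, hence belongs to $(\mathcal{C}_{bp}:\mathcal{M}_u)$, which forces (\ref{eq31.0}). Granting (\ref{eq31.0}), Fubini yields, for every $x\in\mathcal{M}_u$ and all $q,q',m,n$,
\[
\frac{1}{(q+1)(q'+1)}\sum_{k=m}^{m+q}\sum_{l=n}^{n+q'}(Ax)_{kl}=\sum_{i,j}a(i,j,q,q',m,n)\,x_{ij}=:\phi_{q,q',m,n}(x),
\]
so that $A\in(\mathcal{C}_f:\mathcal{C}_f)_{reg}$ precisely when, for each $x\in\mathcal{C}_f$, $\phi_{q,q',m,n}(x)\to f_2-\lim x$ as $q,q'\to\infty$, uniformly in $m,n$.

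For necessity, restricting to $x\in\mathcal{C}_{bp}$ (where $f_2-\lim$ and $bp-\lim$ coincide) shows $A\in(\mathcal{C}_{bp}:\mathcal{C}_f)_{reg}$, i.e.\ $A$ is almost regular; by Lemma \ref{lem3.3} this is exactly (\ref{eq31.0}) together with (\ref{eq33.1})--(\ref{eq33.4}). To extract (\ref{eq34.1}) I would exploit shift‑invariance of almost convergence: for $x\in\mathcal{C}_f$ the shift $x^{[1,0]}=(x_{i+1,j})_{i,j}$ again lies in $\mathcal{C}_f$ with the same $f_2-$limit, so $\phi_{q,q',m,n}(x)-\phi_{q,q',m,n}(x^{[1,0]})\to0$ uniformly in $m,n$; reindexing and discarding the boundary row $\sum_j a(0,j,q,q',m,n)x_{0j}$, which vanishes in the limit by (\ref{eq33.4}), rewrites this difference as $-\sum_{i,j}\Delta_{10}a(i,j,q,q',m,n)\,x_{i+1,j}$. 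Since $x^{[1,0]}$ ranges over all of $\mathcal{C}_f$ as $x$ does, we obtain $\sum_{i,j}\Delta_{10}a(i,j,q,q',m,n)\,y_{ij}\to0$ as $q,q'\to\infty$, uniformly in $m,n$, for every $y\in\mathcal{C}_f$, and symmetrically for $\Delta_{01}$ (using (\ref{eq33.3})). The crux is to upgrade this to the $\ell_1$‑type vanishing in (\ref{eq34.1})--(\ref{eq34.2}): if (\ref{eq34.1}) failed, there would be $\epsilon_0>0$ and indices $q_\nu,q'_\nu\to\infty$, $m_\nu,n_\nu$ with $\sum_{i,j}\bigl|\Delta_{10}a(i,j,q_\nu,q'_\nu,m_\nu,n_\nu)\bigr|\ge\epsilon_0$; by passing to a subsequence so that almost all of this mass sits on pairwise disjoint finite blocks of $\mathbb{N}\times\mathbb{N}$ that are so widely separated that their union has uniform zero density, one may patch the matching sign patterns into a single sequence $y$, which then lies in $[\mathcal{C}_{f_0}]\subset\mathcal{C}_f$ while, by disjointness, $\sum_{i,j}\Delta_{10}a(i,j,q_\nu,q'_\nu,m_\nu,n_\nu)\,y_{ij}\ge\epsilon_0/2$ for infinitely many $\nu$ --- contradicting the previous step. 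This sliding‑block construction --- in particular, ensuring that the test sequence is genuinely almost convergent while still revealing the variation --- is the main obstacle, and is where the argument of \cite{MM} concentrates its effort.

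For sufficiency, assume $A$ is almost regular and (\ref{eq34.1})--(\ref{eq34.2}) hold, fix $x\in\mathcal{C}_f$ with $f_2-\lim x=L$, and show $\phi_{q,q',m,n}(x)\to L$ uniformly in $m,n$. Since almost regularity includes (\ref{eq33.2}), namely $\sum_{i,j}a(i,j,q,q',m,n)\to1$ uniformly, it suffices to treat $z:=x-Le\in\mathcal{C}_{f_0}$ and prove $\phi_{q,q',m,n}(z)\to0$ uniformly. Set $M:=\sup_{q,q',m,n}\sum_{i,j}|a(i,j,q,q',m,n)|$, which is finite since $M\le\sup_{k,l}\sum_{i,j}|a_{klij}|$ by (\ref{eq31.0}). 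Given $\epsilon>0$, by almost nullity of $z$ choose $p,p'$ with $M\sup_{i,j}\bigl|w^{(p,p')}_{ij}\bigr|<\epsilon/2$, where $w^{(p,p')}_{ij}:=\frac{1}{(p+1)(p'+1)}\sum_{i'=i}^{i+p}\sum_{j'=j}^{j+p'}z_{i'j'}$. Interchanging the order of summation gives
\[
\phi_{q,q',m,n}(z)=\sum_{i,j}a(i,j,q,q',m,n)\,w^{(p,p')}_{ij}+\frac{1}{(p+1)(p'+1)}\sum_{a=0}^{p}\sum_{b=0}^{p'}\sum_{i,j}z_{ij}\bigl[a(i,j,q,q',m,n)-a(i-a,j-b,q,q',m,n)\bigr],
\]
with the convention $a(i,j,\cdot)=0$ when $i<0$ or $j<0$. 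The first term is at most $M\sup_{i,j}\bigl|w^{(p,p')}_{ij}\bigr|<\epsilon/2$, uniformly in $q,q',m,n$. For the second, writing $a(i,j,\cdot)-a(i-a,j-b,\cdot)$ as a telescoping sum of $a$ first‑order differences $\Delta_{10}a$ in the first index and $b$ first‑order differences $\Delta_{01}a$ in the second, and summing over $i,j$, bounds its absolute value by $\|z\|_{\infty}(p+p')\bigl(\sum_{i,j}|\Delta_{10}a(i,j,q,q',m,n)|+\sum_{i,j}|\Delta_{01}a(i,j,q,q',m,n)|\bigr)$; by (\ref{eq34.1})--(\ref{eq34.2}) this is $<\epsilon/2$ once $q,q'$ are large enough, uniformly in $m,n$. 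Hence $|\phi_{q,q',m,n}(z)|<\epsilon$ for all large $q,q'$, uniformly in $m,n$, and letting $\epsilon\downarrow0$ gives $\phi_{q,q',m,n}(x)\to L$ uniformly, that is, $A\in(\mathcal{C}_f:\mathcal{C}_f)_{reg}$.
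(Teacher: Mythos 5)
First, note that the paper offers no proof of this lemma at all: it is quoted verbatim from \cite{MM}, so there is nothing internal to compare your argument against; I can only judge it on its own terms.

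Your sufficiency half is essentially complete and correct: the reduction to $\phi_{q,q',m,n}$ via (\ref{eq31.0}) and Fubini, the splitting $x=z+Le$ with $z\in\mathcal{C}_{f_0}$, the window-average identity for $w^{(p,p')}$, and the telescoping bound controlled by (\ref{eq34.1})--(\ref{eq34.2}) is the standard Duran/Lorentz-type argument and works; the only loose end is the boundary rows $i<a$ (resp.\ columns $j<b$), where $a(i-a,j,\cdot)=0$ by convention and the telescoping produces the extra terms $\sum_j|a(i,j,q,q',m,n)|$ for finitely many $i$ --- these vanish uniformly by (\ref{eq33.3})--(\ref{eq33.4}), but you should say so.

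The necessity half, however, has a genuine gap exactly where you place the ``crux''. You correctly derive that $\sum_{i,j}\Delta_{10}a(i,j,q,q',m,n)\,y_{ij}\to 0$ uniformly in $m,n$ for every $y\in\mathcal{C}_f$, but upgrading this weak statement to the $\ell_1$ statement (\ref{eq34.1}) is not a formality: it amounts to showing that the norm of these functionals restricted to the unit ball of $\mathcal{C}_f$ is comparable to their $\ell_1$ coefficient norm, which is false for general coefficient arrays. Your proposed fix --- disjoint, ``widely separated'' finite blocks whose union is uniformly of zero density, carrying matched sign patterns --- does not work as described: separating the blocks controls nothing about their internal density, and the $\ell_1$ mass of $\Delta_{10}a(\cdot,\cdot,q_\nu,q'_\nu,m_\nu,n_\nu)$ may be spread over a large, internally dense rectangle, so that capturing a fixed fraction $\epsilon_0/2$ of it forces the support of $y$ to occupy a positive proportion of some large window; such a set is not uniformly of zero density and the resulting $\pm1$ pattern need not lie in $[\mathcal{C}_{f_0}]$, nor in $\mathcal{C}_f$. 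The actual construction (this is where \cite{MM}, following Lorentz and M\'oricz--Rhoades, does its real work) must exploit the structure that almost regularity imposes on the arrays $\Delta_{10}a(\cdot,\cdot,q,q',m,n)$, not just select supports by mass. As written, the necessity direction is a reduction to an unproved claim, so the proof is incomplete.
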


\begin{thm}\label{thm4.1}
Four-dimensioanl matrix $A=(a_{mnkl})\in([\mathcal{C}_f];\mathcal{C}_{f})$ with $f_2-\lim Ax=[f_2]-\lim_{kl} x_{kl}$ if and only if $A$ is almost $\mathcal{C}_{bp}-$regular,i.e., $A=(a_{mnkl})\in(\mathcal{C}_{bp};\mathcal{C}_{f})$ with $f_2-\lim Ax=bp-\lim_{kl} x_{kl}$ and 
\begin{eqnarray}\label{eq4.1}
	\sum_{k,l\in E}\left| \Delta_{11}a_{mnkl}\right|\to 0, ~~as~ m,n\to\infty
\end{eqnarray}
for each set $E$ which is uniformly zero density where 
\begin{eqnarray*}
\Delta_{11}a_{mnkl}=a_{mnkl}-a_{mn,k+1,l}-a_{mn,k,l+1}+a_{mn,k+1,l+1}
\end{eqnarray*}

\end{thm}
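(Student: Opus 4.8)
The plan is to reduce the characterization of $([\mathcal{C}_f];\mathcal{C}_f)$ to the already-known characterization of $(\mathcal{C}_{bp};\mathcal{C}_f)_{\mathrm{reg}}$ given in Lemma~\ref{lem3.5}(b) (equivalently Lemma~\ref{lem3.3}), using the strong-almost-convergence machinery of Lemma~\ref{lem3.1} as a template, and adding exactly one new two-dimensional difference condition to handle the passage from $\mathcal{C}_{bp}$ to $[\mathcal{C}_f]$. First I would establish necessity: assuming $A=(a_{mnkl})\in([\mathcal{C}_f];\mathcal{C}_f)$ with $f_2\text{-}\lim Ax=[f_2]\text{-}\lim x$, note that $\mathcal{C}_{bp}\subset[\mathcal{C}_f]$ with coincident limits, so $A$ restricted to $\mathcal{C}_{bp}$ forces $A\in(\mathcal{C}_{bp};\mathcal{C}_f)_{\mathrm{reg}}$, giving all conditions of Lemma~\ref{lem3.3}. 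For the new condition (\ref{eq4.1}), I would test $A$ against characteristic-type sequences built on sets $E$ of uniform zero density — specifically sequences of the form $x_{kl}=\chi_E(k,l)\,\zeta_{kl}$ where $\zeta$ oscillates so that $x\in[\mathcal{C}_f]$ with $[f_2]$-limit $0$ (uniform zero density of $E$ makes the Cesàro-type averages of $|x_{kl}|$ vanish uniformly), and then read off that the averaged $A$-transform, after an Abel-type double summation by parts, produces the sum $\sum_{k,l\in E}|\Delta_{11}a_{mnkl}|$; its convergence to $0$ as $m,n\to\infty$ is exactly what must hold for $Ax\in\mathcal{C}_f$.

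For sufficiency I would run the argument in reverse. Given $A\in(\mathcal{C}_{bp};\mathcal{C}_f)_{\mathrm{reg}}$ plus (\ref{eq4.1}), take an arbitrary $x\in[\mathcal{C}_f]$ with $[f_2]\text{-}\lim x=L$ and write $x=L e + (x-Le)$ where $e$ is the sequence of all $1$'s; by regularity the first part contributes $L$ to $f_2\text{-}\lim Ax$, so it suffices to treat $x$ with $[f_2]\text{-}\lim x=0$. Such an $x$ decomposes as a bp-null part plus a part supported on a set of uniform zero density, up to arbitrarily small $[\mathcal{C}_f]$-norm; more precisely, for $\varepsilon>0$ the set $E_\varepsilon=\{(k,l):|x_{kl}|\geq\varepsilon\}$ has uniform zero density (this is the double-sequence analogue of the classical decomposition of strongly almost convergent sequences, and is implicit in Ba\c{s}arir's framework). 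On $x\cdot\chi_{E_\varepsilon}$ one uses (\ref{eq4.1}) together with $\sup_{m,n}\sum_{k,l}|a_{mnkl}|<\infty$ and the summation-by-parts identity to bound the oscillation of the averaged $A$-transform; on the complement $|x_{kl}|<\varepsilon$ one uses the $\ell_1$-bound on rows of $A$ directly. Letting $\varepsilon\to0$ gives $Ax\in\mathcal{C}_f$ with the correct generalized limit.

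The main obstacle I anticipate is the bookkeeping in the double summation by parts that converts $\sum_{k,l} a_{mnkl}\,x_{kl}$ (or rather its Cesàro average over the sliding window) into an expression governed by $\Delta_{11}a_{mnkl}$ on $E$: in two dimensions one must carefully handle the mixed difference $\Delta_{11}=\Delta_{10}\Delta_{01}$ and the ``edge'' terms $\Delta_{10}$, $\Delta_{01}$ separately, and it is only because $A$ is already assumed $\mathcal{C}_{bp}$-regular (so conditions (\ref{eq3.11})--(\ref{eq3.12})-type edge conditions, or their almost-convergence analogues (\ref{eq34.1})--(\ref{eq34.2}), are available) that those edge terms are controlled. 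A secondary subtlety is making the decomposition of a strongly-almost-null $x$ into a small-sup part plus a uniform-zero-density-supported part fully rigorous with the uniformity in $m,n$ intact; I would isolate this as a preliminary lemma, proved directly from the definition of uniform zero density, so that the rest of the sufficiency argument becomes a routine $\varepsilon$-estimate. Finally, the claim $f_2\text{-}\lim Ax=[f_2]\text{-}\lim x$ follows by combining the regularity limit on the $Le$ part with the fact that the remainder contributes $0$ in the limit, exactly as in the proof of Lemma~\ref{lem3.1}.
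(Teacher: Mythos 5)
Your proposal follows essentially the same route as the paper's proof: almost $\mathcal{C}_{bp}$-regularity is forced by restricting $A$ to a subspace with coincident limits, the necessity of (\ref{eq4.1}) is obtained by testing $A$ against sequences supported on a uniformly-zero-density set and summing by parts so that $\sum_{k,l\in E}\left|\Delta_{11}a_{mnkl}\right|$ appears, and sufficiency uses the level set $E=\left\{(k,l):|x_{kl}|\geq\varepsilon\right\}$ (uniformly of zero density for strongly almost null $x$) together with the Moricz--Rhoades summation-by-parts identity and the bound $\sup_{m,n}\sum_{k,l}|a_{mnkl}|<\infty$. If anything, your version of the restriction step (via $\mathcal{C}_{bp}\subset[\mathcal{C}_f]$) and your explicit separation of the mixed difference $\Delta_{11}=\Delta_{10}\Delta_{01}$ from the edge terms is a cleaner rendering of what the paper actually writes.
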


\begin{proof}
$\Rightarrow:$ Suppose that $A=(a_{mnkl})\in([\mathcal{C}_f];\mathcal{C}_{f})$ with $f_2-\lim Ax=[f_2]-\lim_{kl} x_{kl}$. Then $Ax=y$ exists and is in $\mathcal{C}_{f}$ for all sequences $x=x_{kl}\in[\mathcal{C}_{f}]$. Since the inclusion $[\mathcal{C}_f]\subset\mathcal{C}_{f}$ strictly hold and each inclusion is proper, then one can obtain that $x=(x_{kl})$ is also almost convergent to zero and is also in $\mathcal{C}_{f}$ Thus, The matrix $A=(a_{mnkl})$ is almost regular,i.e., $A=(a_{mnkl})\in(\mathcal{C}_f;\mathcal{C}_{f})$ with $f_2-\lim Ax=f_2-\lim_{kl} x_{kl}$. By Lemma \ref{lem3.4} we can also say that $A=(a_{mnkl})\in(\mathcal{C}_{bp};\mathcal{C}_{f})$ with $f_2-\lim Ax=bp-\lim_{kl} x_{kl}$ which prove that the matrix $A=(a_{mnkl})$ is strongly almost $\mathcal{C}_{bp}-$regular with $f_2-\lim Ax=bp-\lim_{kl} x_{kl}$. Therefore, the conditions of the Lemma \ref{lem3.5}(b) and the conditions (\ref{eq34.1})-(\ref{eq34.2}) of the Lemma \ref{lem3.4} satisfied with $A=(a_{MNkl})$. Thus,
\begin{eqnarray*}
p-\lim_{q,q'\to\infty}\frac{1}{(q+1)(q'+1)}\sum_{k=m}^{m+q}\sum_{l=n}^{n+q'}(Ax)_{kl}=y_{MN}, ~~and~~f_2-\lim_{M,N\rightarrow\infty} y_{MN}=0,
\end{eqnarray*}
and since $\mathcal{C}_{f}\subset\mathcal{M}_{u}$ holds, then $y=(y_{MN})$ is also in$\mathcal{M}_{u}$, i.e., there exists a positive real number $K$ such that $\|y\|_{\infty}=\sup_{M,N\in\mathbb{N}}|y_{MN}|\leq K<\infty$. Moreover, since $x=(x_{kl})$ is strongly almost convergent to zero, then 

\begin{eqnarray*}
	\sum_{k=m}^{m+q}\sum_{l=n}^{n+q'}|x_{kl}|<\epsilon(q+1)(q'+1)
\end{eqnarray*}
holds for every $\epsilon>0$, $q,q'\geq 1$ and uniformly in $m,n\in\mathbb{N}$.

Now, let consider the set $E=\left\{ (k,l):|x_{kl}|\geq \frac{\epsilon}{(q+1)(q'+1)} \right\}$. Then the number of element of the set $E$ which lie in the rectangle $D=\{ (k,l):m\leq k<m+q, n\leq l<n+q' \}$ is $o((q+1)(q'+1))$ as $q,q'\to\infty$, uniformly in $m,n\in\mathbb{N}$. Therefore, the set $E$ is uniformly of zero density.

Since $A=(a_{mnkl})$ is almost $\mathcal{C}_{bp}-$regular matrix, then the condition (\ref{eq31.0}) holds for $A$, that is,
\begin{eqnarray*}
\|A\|=\sup_{M,N\in\mathbb{N}}\sum_{k,l}|a_{MNkl}|<\infty. 
\end{eqnarray*}
After having all above preparations, we have 
\begin{eqnarray*}
\left| \frac{1}{(q+1)(q'+1)}\sum_{m=0}^{\infty}\sum_{n=0}^{\infty}a_{MNmn}\sum_{k=m}^{m+q}\sum_{l=n}^{n+q'}x_{kl} \right|&\leq&\frac{1}{(q+1)(q'+1)}\sum_{m=0}^{\infty}\sum_{n=0}^{\infty}|a_{MNmn}|\sum_{k=m}^{m+q}\sum_{l=n}^{n+q'}|x_{kl}|\\
&\leq&\epsilon \|A\|.
\end{eqnarray*}

Moricz and Rhodes \cite[Theorem 1]{Moricz} formulate the following sum after several calculations (see the formulas (2), (3) and (7)). Here we have the same facts that we write,

\begin{eqnarray*}
\frac{1}{(q+1)(q'+1)}\sum_{m=0}^{\infty}\sum_{n=0}^{\infty}a_{MNmn}\sum_{k=m}^{m+q}\sum_{l=n}^{n+q'}x_{kl}&=&o(0)+y_{MN}\\
\nonumber&+&\sum_{k=q}^{\infty}\sum_{l=q'}^{\infty}x_{kl}\left\{\frac{1}{(q+1)(q'+1)}\sum_{m=k-q}^{k}\sum_{n=l-q'}^{l}(a_{MNmn}-a_{MNkl})\right\}
\end{eqnarray*}
Our aim here is to show the right hand side of the above equation is as small as we wish as $M,N\to\infty$. To accomplish this, let's consider $0\leq \pi\leq q$ and $0\leq \rho\leq q'$, then
\begin{eqnarray*}
&&\left|\sum_{k=q}^{\infty}\sum_{l=q'}^{\infty}x_{kl}\left\{\frac{1}{(q+1)(q'+1)}\sum_{m=k-q}^{k}\sum_{n=l-q'}^{l}(a_{MNmn}-a_{MNkl})\right\}\right|\\
&\leq&\frac{ \|x\|}{(q+1)(q'+1)}\sum_{k=q}^{\infty}\sum_{l=q'}^{\infty}\left|\sum_{m=k-q}^{k}\sum_{n=l-q'}^{l}(a_{MNmn}-a_{MNkl}) \right|\\
&\leq&\frac{ \|x\|}{(q+1)(q'+1)}\sum_{k=q}^{\infty}\sum_{l=q'}^{\infty}\sum_{m=k-q}^{k}\sum_{n=l-q'}^{l}\left|a_{MNmn}-a_{MNkl} \right|\\
&=&\frac{ \|x\|}{(q+1)(q'+1)}\sum_{k=q}^{\infty}\sum_{l=q'}^{\infty}\sum_{\pi=0}^{q}\sum_{\rho=o}^{q'}\left|a_{MN,\pi+k-q,\rho+l-q'}-a_{MNkl} \right|\\
&=&\frac{ \|x\|}{(q+1)(q'+1)}\sum_{\pi=0}^{q}\sum_{\rho=o}^{q'}\sum_{k=q}^{\infty}\sum_{l=q'}^{\infty}\left|a_{MN,\pi+k-q,\rho+l-q'}-a_{MNkl} \right|\\
&\leq&\frac{ \|x\|}{(q+1)(q'+1)}\sum_{\pi=0}^{q}\sum_{\rho=o}^{q'}\left\{(q-\pi)\sum_{k=0}^{\infty}\sum_{l=0}^{\infty}\left|\Delta_{10}a_{MNkl}\right|+(q'-\rho)\sum_{k=0}^{\infty}\sum_{l=0}^{\infty}\left|\Delta_{01}a_{MNkl}\right|\right\}\\
&\leq&(q-\pi)\left(\frac{ \|x\|}{(q+1)(q'+1)}\sum_{k\in E}\sum_{l\in E}\left|\Delta_{10}a_{MNkl}\right|+\epsilon\sum_{k=0}^{\infty}\sum_{l=0}^{\infty}\left|\Delta_{10}a_{MNkl}\right|\right)\\
&&+(q'-\rho)\left(\frac{ \|x\|}{(q+1)(q'+1)}\sum_{k\in E}\sum_{l\in E}\left|\Delta_{01}a_{MNkl}\right|+\epsilon\sum_{k=0}^{\infty}\sum_{l=0}^{\infty}\left|\Delta_{01}a_{MNkl}\right|\right)\\
&\leq&\frac{ q\|x\|}{(q+1)(q'+1)}\sum_{k\in E}\sum_{l\in E}\left|\Delta_{10}a_{MNkl}\right|+q\epsilon\sum_{k=0}^{\infty}\sum_{l=0}^{\infty}\left|\Delta_{10}a_{MNkl}\right|\\
&&+\frac{q' \|x\|}{(q+1)(q'+1)}\sum_{k\in E}\sum_{l\in E}\left|\Delta_{01}a_{MNkl}\right|+q'\epsilon\sum_{k=0}^{\infty}\sum_{l=0}^{\infty}\left|\Delta_{01}a_{MNkl}\right|.
\end{eqnarray*}
Then the proof of sufficiency follows by letting $M,N\to\infty$.

$\Leftarrow:$  Suppose $A=(a_{mnkl})$ be an almost $\mathcal{C}_{bp}-$regular matrix, that is, $A=(a_{mnkl})\in(\mathcal{C}_{bp};\mathcal{C}_{f})$ with $f_2-\lim Ax=bp-\lim_{kl} x_{kl}$. Then $Ax$ exists and is in $\mathcal{C}_{f}$ for all sequences $x=x_{kl}\in\mathcal{C}_{bp}$. Then the matrix $A=(a_{mnkl})$ satisfy the conditions (\ref{eq31.0})-(\ref{eq35.4a}) of Lemma \ref{lem3.5}. 

Now, let suppose that the condition (\ref{eq4.1}) not satisfied. Let $E$ be any set which is uniformly of zero density and $x=(x_{kl})$ be a strongly almost convergent double sequence. So, $x=(x_{kl})$ is bounded. If we define the following sequence $z=(z_{kl})$ by
\begin{eqnarray}
z_{kl}=\left\{\begin{array}{ccl}
x_{kl} &,&  (k,l)\in E\\
0 &,&  othervise
\end{array}\right.
\end{eqnarray}
and the sequence $y=(y_{kl})$ by 
\begin{eqnarray}
y_{kl}=z_{kl}-z_{k+1,l}-z_{k,l+1}+z_{k+1,l+1},\textit{ for each }k,l>1~and~ y_{11}=z_{11}
\end{eqnarray}
Since $E$ is uniformly of zero density, then clearly $[f_2]-\lim_{k,l\to\infty}y_{kl}=0$ such that 
\begin{eqnarray*}
\sum_{k=0}^{\infty}\sum_{l=0}^{\infty}|a_{MNkl}y_{kl}|\to 0,~~~asM,N\to\infty,
\end{eqnarray*}
But
\begin{eqnarray*}
	\sum_{k=0}^{\infty}\sum_{l=0}^{\infty}|a_{MNkl}y_{kl}|=\sum_{k=0}^{\infty}\sum_{l=0}^{\infty}|\Delta_{11}a_{MNkl}z_{kl}|=\sum_{k\in E}^{\infty}\sum_{l\in E}^{\infty}|\Delta_{11}a_{MNkl}x_{kl}|
\end{eqnarray*}
So, this is a contradiction according to our assumption. So the condition (\ref{eq4.1}) is necessity to prove $y=(y_{kl})\in\mathcal{C}_f$ for all $x=(_{kl})\in[\mathcal{C}_f]$. This fact complete the proof.
\end{proof}

\begin{thm}
	Four-dimensional matrix $A=(a_{mnkl})\in(B[\mathcal{C}_f];\mathcal{C}_{f})$ with $f_2-\lim Ax=[f_2]-\lim_{kl} x_{kl}$ if and only if $A$ is almost $B(\mathcal{C}_{bp})-$regular,i.e., $A=(a_{mnkl})\in(B(\mathcal{C}_{bp});\mathcal{C}_{f})$ with $f_2-\lim Ax=bp-\lim_{kl} x_{kl}$ and 
	\begin{eqnarray}
	\sum_{k,l\in E}\left| \Delta_{11}e_{mnkl}\right|\to 0, ~~as~ m,n\to\infty
	\end{eqnarray}
	for each set $E$ which is uniformly zero density where $\Delta_{11}a_{mnkl}$ defined as (\ref{eq3.2}) and
	\begin{eqnarray*}
	e_{mnkl}=\sum_{i,j=k,l}^{m,n}\left(\frac{-s}{r}\right)^{i-k}\left(\frac{-u}{t}\right)^{j-l}\frac{a_{mnij}}{rt}
	\end{eqnarray*}
\end{thm}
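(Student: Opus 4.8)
The plan is to reduce the statement about the domain $B[\mathcal{C}_f]$ to the already-characterized statement of Theorem \ref{thm4.1} about $[\mathcal{C}_f]$, exactly mirroring the passage from $\beta(\vartheta)$-duals of $\mathcal{C}_f$ to those of $B[\mathcal{C}_f]$ carried out in Theorem \ref{thm3.1}. First I would recall the change of variable: if $x=(x_{kl})\in B[\mathcal{C}_f]$ and $y=Bx$ then $y\in[\mathcal{C}_f]$, and conversely $x$ is recovered from $y$ via the inverse formula (\ref{eq1.4}). Substituting this expression for $x_{kl}$ into the series $\sum_{k,l}a_{mnkl}x_{kl}$ and interchanging the order of summation produces a new four-dimensional matrix $E=(e_{mnkl})$ with
\begin{eqnarray*}
e_{mnkl}=\sum_{i,j=k,l}^{m,n}\left(\frac{-s}{r}\right)^{i-k}\left(\frac{-u}{t}\right)^{j-l}\frac{a_{mnij}}{rt},
\end{eqnarray*}
so that $(Ax)_{mn}=(Ey)_{mn}$ for every $m,n$. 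Hence $A\in(B[\mathcal{C}_f];\mathcal{C}_f)$ with $f_2\text{-}\lim Ax=[f_2]\text{-}\lim x_{kl}$ if and only if $E\in([\mathcal{C}_f];\mathcal{C}_f)$ with $f_2\text{-}\lim Ey=[f_2]\text{-}\lim y_{kl}$, provided the interchange of summation is legitimate; the justification of that interchange (absolute convergence, which follows once the summability conditions are in force, just as in \cite[Theorem 3.11]{OT2}) is the one genuinely delicate bookkeeping point.

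Next I would invoke Theorem \ref{thm4.1} applied to the matrix $E$. That theorem says $E\in([\mathcal{C}_f];\mathcal{C}_f)$ with the prescribed limit behaviour if and only if $E$ is almost $\mathcal{C}_{bp}$-regular, i.e. $E\in(\mathcal{C}_{bp};\mathcal{C}_f)$ with $f_2\text{-}\lim Ey=bp\text{-}\lim y_{kl}$, together with the extra $\Delta_{11}$-condition
\begin{eqnarray*}
\sum_{k,l\in E_0}\left|\Delta_{11}e_{mnkl}\right|\to 0\quad\text{as }m,n\to\infty
\end{eqnarray*}
for every set $E_0$ of uniform zero density. The first half, $E\in(\mathcal{C}_{bp};\mathcal{C}_f)$ with that limit, is by definition precisely the statement that $A$ is almost $B(\mathcal{C}_{bp})$-regular, i.e. $A\in(B(\mathcal{C}_{bp});\mathcal{C}_f)$ with $f_2\text{-}\lim Ax=bp\text{-}\lim x_{kl}$, again by the same change-of-variable $x\leftrightarrow y$ restricted to $\mathcal{C}_{bp}$. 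The second half is literally the displayed $\Delta_{11}e_{mnkl}$ condition in the theorem statement. So the two sides of the "if and only if" match up term for term.

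The remaining work is to make the two translations rigorous and to confirm that $B^{-1}$ indeed carries $\mathcal{C}_{bp}$ onto $B(\mathcal{C}_{bp})$ bijectively (this is where the hypothesis structure of $B(r,s,t,u)$, in particular invertibility of the triangle, is used), so that restricting to bounded convergent sequences on one side corresponds exactly to restricting on the other. I would also note that $E_{mn}=(e_{mnkl})_{k,l}$ lying in the appropriate $\beta(\vartheta)$-dual is automatic from $A_{mn}\in\{B(\mathcal{C}_{bp})\}^{\beta(\vartheta)}$, so no separate domain condition needs to be added. I expect the main obstacle to be the careful handling of the double rearrangement in $(Ax)_{mn}=(Ey)_{mn}$ — verifying that the partial sums converge in the required $\vartheta$-sense and that Abel-type summation by parts produces exactly the matrix $E$ above — but since this is the identical computation already performed in \cite[Theorem 3.11]{OT2} and reused in Theorem \ref{thm3.1}, it can be cited rather than reproduced. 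With that in hand the equivalence in the theorem is immediate from Theorem \ref{thm4.1}.
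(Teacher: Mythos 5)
Your proposal is correct and follows essentially the same route as the paper: the paper likewise substitutes the inverse relation (\ref{eq1.4}) into the partial sums $\sum_{k,l=0}^{m,n}a_{mnkl}x_{kl}$ to obtain $(Ey)_{mn}$ with exactly the matrix $E=(e_{mnkl})$ you describe, passes to the limit to get $Ax=Ey$, and then applies Theorem \ref{thm4.1} with $E$ in place of $A$. Your additional remarks on justifying the interchange of summation and on the correspondence between $\mathcal{C}_{bp}$ and $B(\mathcal{C}_{bp})$ are sensible elaborations of points the paper leaves implicit, but they do not change the argument.
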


\begin{proof}
Suppose that the matrix $A=(a_{mnkl})\in(B[\mathcal{C}_f]:\mathcal{C}_f)$. Then, $Ax$ exists and is in $\mathcal{C}_f$ for all $x=(x_{mn})\in B[\mathcal{C}_f]$ which implies that $Bx=(Bx)_{mn}\in [\mathcal{C}_f]$. We have the following equality derived from the $(m,n)th-$partial sum of the series $\sum_{k,l}a_{mnkl}x_{kl}$ with respect to the relation between terms of $x=(x_{kl})$ and $y=(y_{kl})$,
\begin{eqnarray}\label{eq4.4} \sum_{k,l=0}^{m,n}a_{mnkl}x_{kl}&=&\sum_{k,l=0}^{m,n}a_{mnkl}\sum_{j,i=0}^{k,l}\left(\frac{-s}{r}\right)^{k-j}\left(\frac{-u}{t}\right)^{l-i}\frac{y_{ji}}{rt}\\
&=&\sum_{k,l=0}^{m,n}\sum_{j,i=k,l}^{m,n}\left(\frac{-s}{r}\right)^{j-k}\left(\frac{-u}{t}\right)^{i-l}\frac{a_{mnji}}{rt}y_{kl} \nonumber\\
&=&(Ey)_{mn},\nonumber
\end{eqnarray}
where the four-dimensional matrix $E=(e_{mnkl})$ is defined as in \cite[p.16]{Orhan} by
\begin{eqnarray*}
	e_{mnkl}=\left\{\begin{array}{ccl}
		\sum_{j,i=k,l}^{m,n}\left(\frac{-s}{r}\right)^{j-k}\left(\frac{-u}{t}\right)^{i-l}\frac{a_{mnji}}{rt}&, & 0\leq k\leq m, 0\leq l\leq n; \\
		0&, & \textrm{otherwise}
	\end{array}\right.
\end{eqnarray*}
for all $m,n\in\mathbb{N}$. Then, by taking $f_2-$limit on (\ref{eq4.4}) as $q,q'\to\infty$, we have $Ax=Ey$. Therefore, $Ey\in\mathcal{C}_f$ whenever $y\in[\mathcal{C}_f]$ , that is, $E\in([\mathcal{C}_f]:\mathcal{C}_f)$. Hence, the conditions of Theorem \ref{thm4.1} hold with $E=(e_{mnkl})$ instead of $A=(a_{mnkl})$. This completes the proof.
\end{proof}

\begin{cor}
A four dimensional matrix $A=(a_{mnkl})\in(B[\mathcal{C}_f]:\mathcal{M}_u)$  if and only if $A_{mn}\in\{B[\mathcal{C}_f]\}^{\beta(\vartheta)}$ and the condition (\ref{eq2.4}) hold with $e_{mnkl}$ instead of $a_{mnkl}$.
\end{cor}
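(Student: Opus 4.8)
The plan is to reduce the statement about the domain $B[\mathcal{C}_f]$ to the already-established characterization of the class $([\mathcal{C}_f]:\mathcal{M}_u)$ via Corollary~\ref{cor3.1}, using exactly the same transfer mechanism employed in the preceding theorem for the target space $\mathcal{C}_f$. First I would take $A=(a_{mnkl})$ and $x=(x_{mn})\in B[\mathcal{C}_f]$, so that $y=Bx\in[\mathcal{C}_f]$, and write out the $(m,n)$-th partial sum $\sum_{k,l=0}^{m,n}a_{mnkl}x_{kl}$ by substituting the relation \eqref{eq1.4}. Rearranging the order of summation over the triangular index set, this equals $(Ey)_{mn}$ with $E=(e_{mnkl})$ given by $e_{mnkl}=\sum_{j,i=k,l}^{m,n}(-s/r)^{j-k}(-u/t)^{i-l}a_{mnji}/(rt)$ for $0\le k\le m,\ 0\le l\le n$ and $0$ otherwise — precisely the matrix appearing in the statement and in \eqref{eq4.4}. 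This is the same computation as in Theorem~\ref{thm3.1} and the preceding theorem, so I would merely cite it rather than redo it.

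Next I would observe that $Ax\in\mathcal{M}_u$ for every $x\in B[\mathcal{C}_f]$ holds if and only if $Ey\in\mathcal{M}_u$ for every $y\in[\mathcal{C}_f]$, because $B$ (equivalently $B(r,s,t,u)$) is a bijection between $B[\mathcal{C}_f]$ and $[\mathcal{C}_f]$ — this is the content of Theorem~\ref{thm2.0}, which gives the linear norm isomorphism $x\mapsto Bx$. Consequently $A\in(B[\mathcal{C}_f]:\mathcal{M}_u)$ if and only if $E\in([\mathcal{C}_f]:\mathcal{M}_u)$. Then I would apply Corollary~\ref{cor3.1}: the latter membership is equivalent to $E_{mn}\in\{[\mathcal{C}_f]\}^{\beta(\vartheta)}$ for all $m,n\in\mathbb{N}$ together with condition \eqref{eq2.4} written for $E$, i.e.\ $\sup_{m,n}\sum_{k,l}|e_{mnkl}|<\infty$.

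Finally I would translate the two conditions on $E$ back into conditions on $A$. For the boundedness condition \eqref{eq2.4}, it literally becomes $\sup_{m,n}\sum_{k,l}|e_{mnkl}|<\infty$, which is \eqref{eq2.4} with $e_{mnkl}$ in place of $a_{mnkl}$, exactly as stated. For the other condition, the row $E_{mn}\in\{[\mathcal{C}_f]\}^{\beta(\vartheta)}$ must be re-expressed as a condition on the row $A_{mn}$; here I would invoke the definition of $\{B[\mathcal{C}_f]\}^{\beta(\vartheta)}$ and the fact (used implicitly throughout Section~4) that $a=(a_{kl})\in\{B[\mathcal{C}_f]\}^{\beta(\vartheta)}$ exactly when the associated $D$-matrix, which for a single fixed row coincides with the construction giving $E_{mn}$ from $A_{mn}$, has its rows in $\{[\mathcal{C}_f]\}^{\beta(\vartheta)}$ — so the condition becomes simply $A_{mn}\in\{B[\mathcal{C}_f]\}^{\beta(\vartheta)}$ for all $m,n\in\mathbb{N}$. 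Combining the two gives the claimed characterization.

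The only real obstacle I anticipate is bookkeeping in the last step: one must be careful that the per-row relation $A_{mn}\leftrightarrow E_{mn}$ is indeed the same map as the $a\leftrightarrow D$ correspondence used in defining $\{B[\mathcal{C}_f]\}^{\beta(\vartheta)}$, so that ``$E_{mn}\in\{[\mathcal{C}_f]\}^{\beta(\vartheta)}$ for all $m,n$'' is genuinely equivalent to ``$A_{mn}\in\{B[\mathcal{C}_f]\}^{\beta(\vartheta)}$ for all $m,n$''. Since both are built from the same inverse coefficients $(-s/r)^{\,\cdot}(-u/t)^{\,\cdot}/(rt)$ on the same triangular index set, this matching is routine, and the rest of the argument is a direct application of Theorem~\ref{thm2.0} and Corollary~\ref{cor3.1}.
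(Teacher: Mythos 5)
Your proposal is correct and follows exactly the route the paper intends: the paper states this as an unproved corollary of the preceding theorem, relying on the same transfer $Ax=Ey$ via the matrix $E=(e_{mnkl})$ together with Corollary~\ref{cor3.1}, which is precisely your argument. Your added care about matching the per-row correspondence $A_{mn}\leftrightarrow E_{mn}$ with the $\beta(\vartheta)$-dual construction of Theorem~\ref{thm3.1} only makes explicit what the paper leaves implicit.
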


\begin{cor}
	Four-dimensional matrix $A=(a_{mnkl})\in(B[\mathcal{C}_f];\mathcal{C}_{bp})$ with $bp-\lim Ax=[f_2]-\lim x$ if and only if  $A=(a_{mnkl})\in(B(\mathcal{C}_{bp});\mathcal{C}_{bp})$ with $bp-\lim Ax=bp-\lim_{kl} x_{kl}$ and the conditions (\ref{eq3.11})-(\ref{eq3.12}) hold with $e_{mnkl}$ instead of $a_{mnkl}$.
\end{cor}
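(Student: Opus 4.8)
The plan is to repeat, almost verbatim, the change-of-variable reduction used in the proof of the preceding theorem, replacing the appeal to Theorem~\ref{thm4.1} by an appeal to Lemma~\ref{lem3.1}. In other words, I would pass from $A$ acting on $B[\mathcal{C}_f]$ to the derived matrix $E=(e_{mnkl})$ acting on $[\mathcal{C}_f]$, apply the already-known characterization of $([\mathcal{C}_f];\mathcal{C}_{bp})$ with $bp$-$\lim$ equal to the $[f_2]$-$\lim$, and then transport the resulting conditions back to $A$.

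First I would record two transport facts. By Theorem~\ref{thm2.0}, $x=(x_{kl})\in B[\mathcal{C}_f]$ if and only if $y:=Bx\in[\mathcal{C}_f]$, and, in exactly the same fashion, $x\in B(\mathcal{C}_{bp})$ if and only if $y=Bx\in\mathcal{C}_{bp}$. Then, using the relation (\ref{eq1.4}) between $x$ and $y=Bx$ together with the computation (\ref{eq4.4}) carried out in the proof above, one obtains $Ax=Ey$, where $E=(e_{mnkl})$ is precisely the derived matrix displayed in the statement. Consequently $A\in(B[\mathcal{C}_f];\mathcal{C}_{bp})$ with $bp-\lim Ax=[f_2]-\lim x$ if and only if $E\in([\mathcal{C}_f];\mathcal{C}_{bp})$ with $bp-\lim Ey=[f_2]-\lim y$, and, likewise, $A\in(B(\mathcal{C}_{bp});\mathcal{C}_{bp})$ with $bp-\lim Ax=bp-\lim_{kl}x_{kl}$ if and only if $E\in(\mathcal{C}_{bp};\mathcal{C}_{bp})$ with $bp-\lim Ey=bp-\lim_{kl}y_{kl}$.

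The heart of the argument is then a single application of Lemma~\ref{lem3.1} to the matrix $E$: that lemma asserts precisely that $E\in([\mathcal{C}_f];\mathcal{C}_{bp})$ with $bp-\lim Ey=[f_2]-\lim y$ holds if and only if $E$ is regular, i.e.\ $E\in(\mathcal{C}_{bp};\mathcal{C}_{bp})$ with $bp-\lim Ey=bp-\lim_{kl}y_{kl}$, and, moreover, the conditions (\ref{eq3.11}) and (\ref{eq3.12}) hold for $E$ --- that is, with $e_{mnkl}$ in place of $a_{mnkl}$. Transporting both sides of this equivalence back to $A$ via the two facts of the previous paragraph yields exactly the stated characterization; in particular, both implications of the corollary fall out of the one ``iff'' in Lemma~\ref{lem3.1}, so neither direction needs separate work.

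The only point requiring a little care is the identity $Ax=Ey$, together with the implicit requirement that $A_{mn}$ belong to $\{B[\mathcal{C}_f]\}^{\beta(bp)}$ so that $(Ax)_{mn}$ is a well-defined $bp$-sum in the first place. This is the same interchange-of-summation step that underpins (\ref{eq4.4}): for fixed $m,n$ the inner sum defining $e_{mnkl}$ is finite and vanishes as soon as $k>m$ or $l>n$, so the only genuine convergence question is the one already settled in Theorem~\ref{thm3.1}, and no new obstacle arises here. Everything else is a routine transport of conditions along the linear norm isomorphism $x\mapsto Bx$, so I do not anticipate any further difficulty.
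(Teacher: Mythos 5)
Your proposal is correct and follows exactly the route the paper intends: the paper states this corollary without proof as an immediate consequence of the reduction $Ax=Ey$ from (\ref{eq4.4}) combined with Lemma \ref{lem3.1}, which is precisely the argument you give. Your added remark that the paper's ``$[f_2]\text{-}\lim x$'' should be read as $[f_2]\text{-}\lim y$ with $y=Bx$ is a fair (and correct) clarification of the transported limit relation.
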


\begin{lem} \cite[Theorem 4.7]{MYFB22}
	Let $\lambda$ and $\mu$ represent any double sequence space, and the elements of the four dimensional matrices $A=(a_{mnkl})$ and $G=(g_{mnkl})$ are connected with the relation
	\begin{eqnarray}\label{eq4.9}
	g_{mnkl}=\sum_{i,j=0}^{m,n}b_{mnij}(r,s,t,u)a_{ijkl} ~\textrm{ for all }~m,n,k,l\in\mathbb{N}.
	\end{eqnarray}
	Then, $A\in(\mu:B(\lambda))$ if and only if $G\in(\mu:\lambda)$.
\end{lem}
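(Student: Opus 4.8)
The plan is to read relation (4.9) as the four-dimensional matrix identity $G=B(r,s,t,u)\,A$ and to exploit that $B(r,s,t,u)$ — together with its two-sided inverse $F$ given by the formula for $f_{mnkl}(r,s,t,u)$ — is a triangle, so that products with a double sequence may be associated freely, with no convergence obstruction. Concretely, I would prove the two pointwise identities $Gx=B(r,s,t,u)(Ax)$ whenever $Ax$ exists and $Ax=F(Gx)$ whenever $Gx$ exists, and then read the equivalence off them.

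For the direction $A\in(\mu:B(\lambda))\Rightarrow G\in(\mu:\lambda)$, I would fix $x=(x_{kl})\in\mu$. By definition $A_{mn}\in\mu^{\beta(\vartheta)}$, so $(Ax)_{ij}=\vartheta\text{-}\sum_{k,l}a_{ijkl}x_{kl}$ exists for every $i,j$, and $B(r,s,t,u)(Ax)\in\lambda$. Then I would compute, for each $m,n$,
\[
(Gx)_{mn}=\vartheta\text{-}\sum_{k,l}\Bigl(\sum_{i,j=0}^{m,n}b_{mnij}(r,s,t,u)\,a_{ijkl}\Bigr)x_{kl}=\sum_{i,j=0}^{m,n}b_{mnij}(r,s,t,u)\,(Ax)_{ij}=\bigl(B(r,s,t,u)(Ax)\bigr)_{mn},
\]
where the interchange is legitimate because the inner sum over $(i,j)$ involves only the four indices $(m-1,n-1)$, $(m-1,n)$, $(m,n-1)$, $(m,n)$, and a finite linear combination of $\vartheta$-convergent double series is again $\vartheta$-convergent to the corresponding combination of limits. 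Hence $Gx$ exists and equals $B(r,s,t,u)(Ax)\in\lambda$, so $G\in(\mu:\lambda)$.

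For the converse I would invoke Cooke's theorem (quoted in Section~1): $B(r,s,t,u)$ has the equal two-sided triangle inverse $F$, so (4.9) inverts to $a_{mnkl}=\sum_{i,j=0}^{m,n}f_{mnij}(r,s,t,u)\,g_{ijkl}$, which is a genuinely finite sum over the triangle $0\le i\le m$, $0\le j\le n$. Assuming $G\in(\mu:\lambda)$ and fixing $x\in\mu$, the same finite-interchange argument gives $(Ax)_{mn}=\sum_{i,j=0}^{m,n}f_{mnij}(r,s,t,u)\,(Gx)_{ij}=(F(Gx))_{mn}$, so $Ax$ exists and equals $F(Gx)$; applying $B(r,s,t,u)$ and using $B(r,s,t,u)F=I$ yields $B(r,s,t,u)(Ax)=Gx\in\lambda$, i.e.\ $Ax\in B(\lambda)$. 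Hence $A\in(\mu:B(\lambda))$, which completes the equivalence.

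The only delicate point — and the one I would write out in detail — is the justification of the two associativity identities $(B(r,s,t,u)A)x=B(r,s,t,u)(Ax)$ and $(FG)x=F(Gx)$; for arbitrary four-dimensional matrices such rearrangements can fail, but here they are harmless precisely because $B(r,s,t,u)$ and $F$ are triangles, so each row meets only finitely many columns and the only limiting process that remains is a finite sum of $\vartheta$-limits. I would also emphasise that in the converse direction the existence of $Ax$ is not assumed in advance but is \emph{forced} by the inversion formula, so no extra hypothesis on $A$ is needed.
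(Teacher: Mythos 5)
Your argument is correct. Note that the paper itself gives no proof of this lemma --- it is imported verbatim as \cite[Theorem 4.7]{MYFB22} --- so there is nothing in the text to compare against; your proof supplies exactly the standard argument one would expect behind that citation: read \eqref{eq4.9} as $G=B(r,s,t,u)A$, use the triangle structure of $B(r,s,t,u)$ and of its two-sided inverse $F$ so that all interchanges reduce to finite linear combinations of $\vartheta$-convergent series, and obtain the two identities $Gx=B(r,s,t,u)(Ax)$ and $Ax=F(Gx)$ from which the equivalence follows. Your closing remarks --- that the associativity is harmless only because the outer matrices are triangles, and that in the converse direction the existence of $Ax$ is forced by $A_{mn}=\sum_{i,j=0}^{m,n}f_{mnij}G_{ij}$ being a finite combination of rows in $\mu^{\beta(\vartheta)}$ --- are precisely the points that need to be made, and you make them.
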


\begin{cor}
	A four dimensional matrix $A=(a_{mnkl})\in([\mathcal{C}_f]:B(\mathcal{M}_u))$  if and only if $A_{mn}\in\{[\mathcal{C}_f]\}^{\beta(\vartheta)}$ and the condition (\ref{eq2.4}) hold with $g_{mnkl}$ instead of $a_{mnkl}$.
\end{cor}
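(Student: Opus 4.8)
The plan is to obtain this Corollary from the preceding Lemma (\cite[Theorem 4.7]{MYFB22}) together with Corollary \ref{cor3.1}, in exactly the same fashion as the two Corollaries immediately above it. First I would specialise that Lemma to the choice $\mu=[\mathcal{C}_f]$ and $\lambda=\mathcal{M}_u$; it then says that $A=(a_{mnkl})\in([\mathcal{C}_f]:B(\mathcal{M}_u))$ holds if and only if the four-dimensional matrix $G=(g_{mnkl})$, whose entries are linked to those of $A$ by the relation (\ref{eq4.9}), satisfies $G\in([\mathcal{C}_f]:\mathcal{M}_u)$.

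Next I would apply Corollary \ref{cor3.1} with $G$ in place of $A$: this gives $G\in([\mathcal{C}_f]:\mathcal{M}_u)$ if and only if $G_{mn}\in\{[\mathcal{C}_f]\}^{\beta(\vartheta)}$ for every $m,n\in\mathbb{N}$ and $\sup_{m,n\in\mathbb{N}}\sum_{k,l}|g_{mnkl}|<\infty$, the latter being precisely condition (\ref{eq2.4}) written with $g_{mnkl}$ instead of $a_{mnkl}$. Chaining these two equivalences already yields a characterisation of $([\mathcal{C}_f]:B(\mathcal{M}_u))$; all that remains is to rewrite the row condition on $G$ as the row condition on $A$ that appears in the statement.

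For that last point I would argue as follows. Since $B(r,s,t,u)$ has only finitely many nonzero entries in each row, formula (\ref{eq4.9}) expresses each row of $G$ as the finite combination $G_{mn}=su\,A_{m-1,n-1}+st\,A_{m-1,n}+ru\,A_{m,n-1}+rt\,A_{mn}$ of rows of $A$ (with the usual convention for negative indices), while the inverse relation (\ref{eq1.4}), applied with $k,l$ held fixed, conversely writes $A_{mn}$ as a finite linear combination of the rows $G_{ij}$ with $0\le i\le m$, $0\le j\le n$. Because $\{[\mathcal{C}_f]\}^{\beta(\vartheta)}$ is a linear subspace of $\Omega$, it follows that $G_{mn}\in\{[\mathcal{C}_f]\}^{\beta(\vartheta)}$ for all $m,n$ if and only if $A_{mn}\in\{[\mathcal{C}_f]\}^{\beta(\vartheta)}$ for all $m,n$, which finishes the proof. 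I do not expect a genuine obstacle here: the content is a straightforward combination of results already established in the paper, and the only step requiring any thought — a very mild one — is the linear-algebra equivalence of the two row conditions just described, which rests solely on the invertibility of $B(r,s,t,u)$ and on $\beta(\vartheta)$-duals being vector spaces.
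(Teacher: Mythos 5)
Your argument is correct and matches the paper's (unstated) route exactly: the paper offers no proof of this corollary, presenting it as an immediate consequence of the preceding Lemma (Theorem 4.7 of the cited work, specialised to $\mu=[\mathcal{C}_f]$ and $\lambda=\mathcal{M}_u$) combined with Corollary \ref{cor3.1} applied to $G=(g_{mnkl})$, which is precisely your first two steps. Your additional observation that the row conditions $G_{mn}\in\{[\mathcal{C}_f]\}^{\beta(\vartheta)}$ and $A_{mn}\in\{[\mathcal{C}_f]\}^{\beta(\vartheta)}$ are equivalent --- via the finite-row structure and invertibility of $B(r,s,t,u)$ together with the linearity of the $\beta(\vartheta)$-dual --- is sound and usefully resolves the ambiguous phrasing of the statement, though it goes beyond anything the paper itself supplies.
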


\begin{cor}
	Four-dimensional matrix $A=(a_{mnkl})\in([\mathcal{C}_f];B(\mathcal{C}_{bp}))$ with $bp-\lim Ax=[f_2]-\lim x$ if and only if  $A=(a_{mnkl})\in(\mathcal{C}_{bp};B(\mathcal{C}_{bp}))$ with $bp-\lim Ax=bp-\lim_{kl} x_{kl}$ and the conditions (\ref{eq3.11})-(\ref{eq3.12}) hold with $g_{mnkl}$ instead of $a_{mnkl}$.
\end{cor}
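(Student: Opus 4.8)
Proof proposal for the final corollary ($A=(a_{mnkl})\in([\mathcal{C}_f];B(\mathcal{C}_{bp}))$ with $bp\text{-}\lim Ax=[f_2]\text{-}\lim x$).

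The plan is to reduce the statement to a known characterization by invoking the composition lemma immediately preceding it (the one taken from \cite{MYFB22}, stated with relation (\ref{eq4.9})). First I would set $\lambda=\mathcal{C}_{bp}$ and $\mu=[\mathcal{C}_f]$ in that lemma, so that $A\in([\mathcal{C}_f]:B(\mathcal{C}_{bp}))$ holds if and only if $G\in([\mathcal{C}_f]:\mathcal{C}_{bp})$, where $G=(g_{mnkl})$ is defined by $g_{mnkl}=\sum_{i,j=0}^{m,n}b_{mnij}(r,s,t,u)a_{ijkl}$, i.e. $G=B(r,s,t,u)A$. Observe that applying $B(r,s,t,u)$ on the left commutes with taking $bp$-limits in the appropriate sense, so that the side condition $bp\text{-}\lim Ax=[f_2]\text{-}\lim x$ for $x\in[\mathcal{C}_f]$ translates into $bp\text{-}\lim Gx=[f_2]\text{-}\lim x$; here one uses that $B(r,s,t,u)$ applied to a $bp$-convergent sequence of partial sums is again $bp$-convergent with a $bp$-limit controlled by Lemma (the $(\mathcal{C}_{bp};\mathcal{C}_{bp})$ characterization), and that the $B(r,s,t,u)$-row sums tend to $rt+st+ru+su=(r+s)(t+u)$, which forces the normalization to be consistent precisely in the regular case.

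Next I would apply Lemma \ref{lem3.1} (Ba\c{s}ar\i r's characterization of $([\mathcal{C}_f];\mathcal{C}_{bp})$ with $bp\text{-}\lim =[f_2]\text{-}\lim$) to the matrix $G$. That lemma says $G\in([\mathcal{C}_f];\mathcal{C}_{bp})$ with $bp\text{-}\lim Gx=[f_2]\text{-}\lim x$ if and only if $G\in(\mathcal{C}_{bp};\mathcal{C}_{bp})$ with $bp\text{-}\lim Gx=bp\text{-}\lim_{kl}x_{kl}$ (i.e. conditions (\ref{eq1.5})--(\ref{eq1.9}) hold with $a_{kl}=0$, $v=1$) together with the two extra ``uniform zero density'' conditions (\ref{eq3.11})--(\ref{eq3.12}) for $\Delta_{10}g_{mnkl}$ and $\Delta_{01}g_{mnkl}$. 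Now I invoke the composition lemma a second time, in the opposite direction, to recognize that $G\in(\mathcal{C}_{bp};\mathcal{C}_{bp})$ with $bp\text{-}\lim Gx=bp\text{-}\lim x_{kl}$ is exactly the statement $A\in(\mathcal{C}_{bp};B(\mathcal{C}_{bp}))$ with $bp\text{-}\lim Ax=bp\text{-}\lim x_{kl}$. Finally, I observe that the difference operators $\Delta_{10},\Delta_{01}$ act on the $(k,l)$ indices only, while the relation (\ref{eq4.9}) affects only the $(m,n)$ indices through $B(r,s,t,u)$; hence $\Delta_{10}g_{mnkl}$ and $\Delta_{01}g_{mnkl}$ are obtained from $\Delta_{10}a_{mnkl}$, $\Delta_{01}a_{mnkl}$ by the same left-composition, so conditions (\ref{eq3.11})--(\ref{eq3.12}) for $g$ are precisely ``(\ref{eq3.11})--(\ref{eq3.12}) hold with $g_{mnkl}$ instead of $a_{mnkl}$'', which is what the corollary asserts. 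Assembling the two equivalences yields the claim.

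The main obstacle I expect is the careful bookkeeping of the limit condition under left-composition by $B(r,s,t,u)$: one must verify that $bp\text{-}\lim (B A x)_{mn}$ exists and equals $(r+s)(t+u)$ times $bp\text{-}\lim (Ax)_{mn}$ (or the appropriate finite correction), and that this is compatible with the normalization $v=1$ in Lemma \ref{lem3.1} both before and after composition. This is essentially a double application of Lemma (the $(\mathcal{C}_{bp};\mathcal{C}_{bp})$ statement) and is routine but must be stated precisely; in the regularity case the correction constants cancel, so no genuine difficulty arises. Everything else is a direct translation through relation (\ref{eq4.9}), entirely parallel to the proof of the preceding corollary for $([\mathcal{C}_f];\mathcal{M}_u)$ and $B(\mathcal{M}_u)$.
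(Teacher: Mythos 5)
Your proposal follows exactly the route the paper intends: the corollary is stated without proof as an immediate consequence of the composition lemma (\ref{eq4.9}) applied with $\mu=[\mathcal{C}_f]$, $\lambda=\mathcal{C}_{bp}$, followed by Lemma \ref{lem3.1} applied to $G=(g_{mnkl})$, and a second use of the composition lemma to read the regularity of $G$ as $A\in(\mathcal{C}_{bp};B(\mathcal{C}_{bp}))$. The only caveat is your remark about the row-sum factor $(r+s)(t+u)$: this worry is spurious rather than something that ``cancels,'' because once $Ax\in B(\mathcal{C}_{bp})$ the only meaningful reading of $bp\text{-}\lim Ax$ is $bp\text{-}\lim (Gx)_{mn}$ with $Gx=B(r,s,t,u)(Ax)$, and with that reading used consistently on both sides of the equivalence no normalization constant ever appears.
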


\section{conclusion}

The matrix domain on some almost convergent single sequence spaces have been done by several mathematicians (see \cite{FBMK,OTFB2,KKMS,KKMS2}). The idea of calculating four-dimensional matrix domain on almost convergent double sequences spaces is a new subject and studied by a few mathematicians (see \cite{Orhan,Orhan 2,Husamettin}). 

In this paper, as a natural continuation of the papers \cite{Orhan,Orhan 2,Husamettin} we calculated the four-dimensional generalized difference matrix $B(r,s,t,u)$ domain $B[\mathcal{C}_f]$ and $B[\mathcal{C}_{f_0}]$ on the spaces of strongly almost convergent and strongly almost null double sequences $[\mathcal{C}_f]$ and $[\mathcal{C}_{f_0}]$, respectively. We proved some strict inclusion relations and calculated the dual spaces of space $B[\mathcal{C}_f]$. Characterization of the four-dimensional matrix class $([\mathcal{C}_f];\mathcal{C}_{f})$ was an open problem and we put the necessary and sufficient conditions of four-dimensional matrix mapping on the class $([\mathcal{C}_f];\mathcal{C}_{f})$ and proved it. Then we also characterized some other matrix classes which set up with $B(r,s,t,u)-$domain of the space $[\mathcal{C}_f]$. 

Similar works can be done by letting other four-dimensional matrices domain on almost convergent double sequences and new matrix classes including $[\mathcal{C}_f]$ can be characterized for the further studies.

\section*{Competing interests}
The author OT declares here that there is no competing interests.

\section*{Author's contributions}
The author Orhan Tu\v{g} defined new strongly almost convergent and strongly almost null double sequence spaces which were derived as the domain of four-dimensional generalized difference matrix $B(r,s,t,u)=(b_{mnkl}(r,s,t,u))$ and proved some topological results. Moreover, Orhan Tu\v{g} computed the $\alpha-$, $\beta(\vartheta)-$ and $\gamma-$duals of this new strongly almost convergent double sequence spaces and characterized some new matrix classes. In the last section, some results were stated and some open problems were given by the author. The author read and approved the final manuscript\ldots

\section*{Acknowledgement}
I would like to thank Prof. Dr. Viladimir Rakocevic and Prof. Dr. Eberhard Malkowsky for their unlimited supports and valuable suggestions on all my academic works. I also would like thank to the colleagues in the Faculty of Education, Tishk International University for their valuable comments on the latest version of this paper. This work was supported by the Research Center of Tishk International University, Erbil-IRAQ.
\section*{Article Information}
The previous related works of this paper were published in "Journal of Inequalities and Applications".

\end{document}